\gdef\yama{\mbox{\textbf{山}}}
\gdef\ten{\mbox{\textbf{天}}}
\newcommand{\C}{\mathbb{C}}
\renewcommand{\P}{\mathbb{P}}
\newcommand{\F}{\mathbb{F}}
\newcommand{\Het}{H^1_{\text{\'{e}t}}}
\newcommand{\Q}{\mathbb{Q}}
\newcommand{\bmu}{\mathbf{\upmu}}
\newcommand{\Z}{\mathbb{Z}}
\newcommand{\less}{\mbox{$<$}}
\newcommand{\lesseq}{\mbox{$\leq$}}
\DeclareMathOperator{\Aut}{Aut}
\DeclareMathOperator{\Out}{Out}
\DeclareMathOperator{\End}{End}
\DeclareMathOperator{\Hom}{Hom}
\DeclareMathOperator{\ord}{ord}
\DeclareMathOperator{\Gal}{Gal}
\DeclareMathOperator{\Div}{Div}
\DeclareMathOperator{\Jac}{Jac}
\DeclareMathOperator{\Supp}{Supp}
\DeclareMathOperator{\Stab}{Stab}
\DeclareMathOperator{\rank}{rank}
\DeclareMathOperator{\divisor}{div}
\newtheorem{introtheorem}{Theorem}
\newtheorem{theorem}{Theorem}
\newtheorem{lemma}[theorem]{Lemma}
\newtheorem{proposition}[theorem]{Proposition}
\newtheorem{corollary}[theorem]{Corollary}
\theoremstyle{definition}
\numberwithin{equation}{section}
\numberwithin{theorem}{section}
\title[Cyclic covers and Ihara's Question]{Cyclic covers and Ihara's Question}
\author[C.~Rasmussen]{Christopher Rasmussen}
\address{Department of Mathematics and Computer Science \\ Wesleyan University \\ Middletown CT, 06459 \\ United States}
\email{crasmussen@wesleyan.edu}
\author[A.~Tamagawa]{Akio Tamagawa}
\address{Research Institute for Mathematical Sciences \\Kyoto University \\ Kyoto 606-802 \\ Japan}
\email{tamagawa@kurims.kyoto-u.ac.jp}
\begin{document}
\begin{CJK}{UTF8}{min}

\begin{abstract}
Let $\ell$ be a rational prime. Given a superelliptic curve $C/k$ of $\ell$-power degree, we describe the field generated by the $\ell$-power torsion of the Jacobian variety in terms of the branch set and reduction type of $C$ (and hence, in terms of data determined by a suitable affine model of $C$). If the Jacobian is good away from $\ell$ and the branch set is defined over a pro-$\ell$ extension of $k(\bmu_{\ell^\infty})$ unramified away from $\ell$, then the $\ell$-power torsion of the Jacobian is rational over the maximal such extension.

By decomposing the covering into a chain of successive cyclic $\ell$-coverings, the mod $\ell$ Galois representation attached to the Jacobian is decomposed into a block upper triangular form. The blocks on the diagonal of this form are further decomposed in terms of the Tate twists of certain subgroups $W_s$ of the quotients of the Jacobians of consecutive coverings.

The result is a natural extension of earlier work by Anderson and Ihara, who demonstrated that a stricter condition on the branch locus guarantees the $\ell$-power torsion of the Jacobian is rational over the fixed field of the kernel of the canonical pro-$\ell$ outer Galois representation attached to an open subset of $\P^1$.
\end{abstract}

\maketitle

\section*{Introduction}

Let $k$ denote a number field and $\ell$ a rational prime number. We fix, once and for all, an algebraic closure $\bar{k}$ of $k$. We let $G_k$ denote the Galois group of $\bar{k}/k$. Let $S_0$ be a $G_k$-stable finite subset of $\P^1(\bar{k})$ satisfying $\{0, 1, \infty \} \subseteq S_0$. We consider pairs $(X, f)$ where
\begin{itemize}[itemsep=0pt]
\item $X/\bar{k}$ is a complete smooth irreducible curve,
\item $f \colon X \to \P^1_{\bar{k}}$ is a $\bar{k}$-morphism branched only over $S_0$,
\item the Galois closure of $f$ has degree a power of $\ell$.
\end{itemize}
Because such pairs admit a natural action from the pro-$\ell$ fundamental group of $\P^1 - S_0$, it is natural to study their properties by way of the canonical pro-$\ell$ Galois representation
\[ \Phi_{k,\ell,S_0} \colon G_k \longrightarrow \Out \bigl( \pi_1^{\text{pro-}\ell}(\P^1_{\bar{k}} - S_0) \bigr). \]
This is the main focus of \cite{Anderson-Ihara:1988}, where Anderson and Ihara demonstrate several strong results about the arithmetic of both the individual curves appearing in such pairs, as well as the entire category $\mathfrak{X}$ of such pairs.\footnote{In fact, Anderson and Ihara consider the more general possibility of $S_0 \subset \P^1(\C)$, although we do not consider transcendental cases here.} In studying the field extension fixed by the kernel of $\Phi_{k,\ell, S_0}$, they introduce a combinatorial object, $\mathbb{S}$, consisting of finite sets $S \subset \P^1_{\bar{k}}$ obtained by pulling back $S_0$ through a finite combination of certain linear fractional transformations and $\ell$th power mappings.

For example, the following proposition follows directly from their work (see \cite[Cor.~3.8.1 and Remark pgs.~292-293]{Anderson-Ihara:1988}).
\begin{introtheorem}\label{thm:AI}
Take $S_0 = \{0,1,\infty\}$. Suppose $k$ is fixed by the kernel of $\Phi_{\Q, \ell, S_0}$, and let $C/k$ be a geometrically irreducible curve with affine model
\[{y^{\ell^n} = \lambda f(x)},\]
where $f$ is a monic polynomial in $k[x]$. Let $J$ be the Jacobian of $C$. Suppose there exist (possibly identical) sets $S, S' \in \mathbb{S}$ such that $\lambda \in S$ and the roots of $f(x)$ are contained in $S'$. Then the $\ell$-power torsion of $J$ is rational over the fixed field of the kernel of $\Phi_{\Q,\ell, S_0}$.
\end{introtheorem}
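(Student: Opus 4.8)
Write $K:=\bar{\Q}^{\ker\Phi_{\Q,\ell,S_0}}$ for the field in question, and recall that $k\subseteq K$ and that $\Q(\bmu_{\ell^\infty})\subseteq K$ (the kernel of $\Phi_{\Q,\ell,S_0}$ already acts trivially on $\pi_1^{\text{pro-}\ell}(\P^1_{\bar{\Q}}-S_0)^{\mathrm{ab}}\cong\Z_\ell(1)^{2}$). The assertion $\Q(J[\ell^\infty])\subseteq K$ is equivalent to $G_K$ acting trivially on $T_\ell J$, and since $T_\ell J\cong\Het(C_{\bar{\Q}},\Z_\ell)(1)$ with $\bmu_{\ell^\infty}\subseteq K$, it suffices to show $G_K$ acts trivially on $V:=\Het(C_{\bar{\Q}},\Q_\ell)$: triviality there passes to the lattice $\Het(C_{\bar{\Q}},\Z_\ell)$ and thence to $\Q_\ell/\Z_\ell$-coefficients and to $J[\ell^\infty]$. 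The plan is to obtain this one Tate-twisted eigenpiece at a time.

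Because $\bmu_{\ell^n}\subseteq K$, the deck group $\Delta\cong\Z/\ell^n$ of $C\to\P^1_x$, generated by $(x,y)\mapsto(x,\zeta_{\ell^n}y)$, acts by $K$-automorphisms, hence on $V$ commuting with $G_K$. Extending scalars to $\bar{\Q}_\ell$, decompose $V=\bigoplus_\chi V_\chi$ into $\Delta$-eigenspaces, indexed by characters $\chi\colon\Delta\to\bmu_{\ell^n}$. Since $\bmu_{\ell^n}\subseteq K$, the group $G_K$ fixes every $\chi$ and preserves each $V_\chi$, so it is enough to treat each $V_\chi$ separately; as the decomposition is a direct sum, no extension problem intervenes at this stage.

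Fix $\chi$ of exact order $\ell^s$. Along the intermediate $\Z/\ell^s$-cover $C_s\colon y_s^{\ell^s}=\lambda f(x)$, Kummer theory identifies $V_\chi$ with the genuinely level-$s$ part of the cohomology of $\P^1_{\bar{\Q}}$ minus the branch locus $B_s$ of $C_s\to\P^1$ --- a set contained in $S'$ together with at most the point $\infty$, hence in a member of $\mathbb{S}$ --- with coefficients in the rank-one sheaf attached to $\chi$. Unwinding the linear fractional transformations and $\ell$-power pullbacks that present $C_s$ as a cover of $\P^1-S_0$ --- precisely the operations generating $\mathbb{S}$, so that the hypothesis that the roots of $f$ lie in $S'\in\mathbb{S}$ is exactly what keeps the successive branch loci inside $\mathbb{S}$ --- realizes $V_\chi$, up to a Tate twist and a twist by $\chi\circ\kappa_\lambda$, as a $G_K$-subquotient of the Galois module $H^1(\pi_1^{\text{pro-}\ell}(\P^1_{\bar{\Q}}-\widetilde S))$ for some $\widetilde S\in\mathbb{S}$; here $\kappa_\lambda\colon G_K\to\bmu_{\ell^n}$ is the Kummer character of $\lambda$, and this last twist enters because passing from $f$ to $\lambda f$ twists $C$, hence $V$, by $\kappa_\lambda$ followed by $\bmu_{\ell^n}\cong\Delta\hookrightarrow\Aut J$. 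Now I would invoke \cite[Cor.~3.8.1]{Anderson-Ihara:1988}: since $\widetilde S\in\mathbb{S}$, the $G_\Q$-action on $H^1(\pi_1^{\text{pro-}\ell}(\P^1_{\bar{\Q}}-\widetilde S))$ is trivial on $\ker\Phi_{\Q,\ell,S_0}$, hence on $G_K$; the hypothesis $\lambda\in S\in\mathbb{S}$ forces $\lambda^{1/\ell^n}\in K$, so $\kappa_\lambda$ is trivial on $G_K$ (the remark in \cite[pp.~292--293]{Anderson-Ihara:1988} is what records this treatment of the constant twist); and the cyclotomic character is trivial on $G_K$. Hence $G_K$ acts trivially on each $V_\chi$, and summing over $\chi$ finishes the proof.

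I expect the main obstacle to be the identification in the third paragraph: describing each eigencomponent $V_\chi$ as an explicit $G_\Q$-module --- with the correct Tate twist, the correct dependence on the branch points and on their differences $\alpha_i-\alpha_j$, and the correct dependence on $\lambda$ --- and matching it with a module governed by $\Phi_{\Q,\ell,\widetilde S}$ for a set $\widetilde S$ of $\mathbb{S}$. This is where the recursive structure of $\mathbb{S}$ and the core computations of \cite{Anderson-Ihara:1988} are indispensable; concretely one needs $K$ to contain the $\ell$-power roots of the quantities $1-\zeta$ (and of the relevant cyclotomic units) for $\zeta\in\bmu_{\ell^\infty}$ --- Ihara's classes --- since differences of points of an $\mathbb{S}$-set reduce to these, after which everything else is comparatively formal.
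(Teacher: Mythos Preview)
The paper does not give its own proof of this statement: it is presented as a direct consequence of Anderson--Ihara's work, with the single citation ``see \cite[Cor.~3.8.1 and Remark pgs.~292--293]{Anderson-Ihara:1988}'' standing in for an argument. Your proposal is therefore not competing against a proof in the paper but rather sketching what lies behind that citation, and you have correctly identified the two ingredients the paper points to --- Corollary~3.8.1 for the action on cohomology of $\P^1$ minus an $\mathbb{S}$-set, and the Remark on pp.~292--293 for the handling of the leading constant $\lambda$ via its Kummer character.

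Your eigenspace decomposition under the cyclic deck group and reduction to individual $V_\chi$ is the standard and correct first move. The honest assessment you give in your final paragraph is accurate: the substantive content is entirely in the identification of each $V_\chi$ with a $G_\Q$-module controlled by $\Phi_{\Q,\ell,\widetilde S}$ for some $\widetilde S\in\mathbb{S}$, and this is precisely what Anderson--Ihara establish. What you have written in paragraph three is a plausible outline but not a proof --- phrases like ``unwinding the linear fractional transformations and $\ell$-power pullbacks'' and ``realizes $V_\chi$, up to a Tate twist and a twist by $\chi\circ\kappa_\lambda$, as a $G_K$-subquotient'' gesture at the right mechanism without carrying it out. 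Since the paper itself defers entirely to \cite{Anderson-Ihara:1988} here, your sketch is at the same level of detail as the paper's treatment, and arguably more informative; but if the goal were a self-contained argument, the third paragraph would need to be replaced by the actual construction from \cite{Anderson-Ihara:1988}.
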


We introduce the kanji characters $\yama$ (`yama') and $\ten$ (`ten') to describe the situation more precisely. The characters translate in meaning as `mountain' and `heaven,' respectively. Let $\bmu$, $\bmu_N$, $\bmu_{\ell^\infty}$ denote, respectively, the group of all roots of unity in $\bar{k}$, the subgroup of $N$th roots of unity, and the subgroup $\cup_{n \geq 1} \bmu_{\ell^n}$ of $\ell$-power roots of unity. We let $\ten := \ten(k,\ell)$ denote the maximal pro-$\ell$ extension of $k(\bmu_{\ell^\infty})$ which is unramified away from $\ell$. In case $S_0 = \{0, 1, \infty \}$, we write $\yama := \yama(k,\ell)$ for the fixed field of the kernel of $\Phi_{k,\ell, S_0}$. It is well-known that $\yama \subseteq \ten$ for any choice of $k$ and $\ell$. Ihara has asked whether $\yama = \ten$ in case $k = \Q$; that is, \emph{does the mountain reach the heavens}? Ihara's question has been partially resolved. In the case where $\ell$ is an odd regular prime, Sharifi demonstrates in \cite{Sharifi:2002} that the equality $\yama(\Q,\ell) = \ten(\Q,\ell)$ is a consequence of the Deligne-Ihara Conjecture, which is now known to hold due to the work of Brown \cite{Brown:2012}. Still, Ihara's question remains open in general.

The hypotheses on $C/k$ in Proposition \ref{thm:AI} force $C$, hence $J$, to have good reduction outside the prime $\ell$. In the present article, we provide a generalization, by weakening the condition on both $\lambda$ and the branch set. Rather than requiring that they belong to objects of $\mathbb{S}$, we show that it suffices for them to be rational over the extension of $k$ generated by such subsets. However, we must explicitly add the hypothesis that the Jacobian possesses good reduction outside $\ell$.
\begin{introtheorem}\label{thm:main_theorem}
Suppose $C/k$ is a geometrically irreducible curve with affine model
\[ y^{\ell^n} = f(x), \qquad f \in k(x), \]
with $n \geq 1$. Let $J$ denote the Jacobian variety of $C$, and let $S \subseteq \P^1(\bar{k})$ be the branch locus of $C$. Suppose further that $C$ satisfies the following conditions:
\begin{enumerate}[label=(\alph*),itemsep=0pt]
\item the variety $J$ has good reduction away from $\ell$,
\item every element of $S$ is $\ten(k,\ell)$-rational.
\end{enumerate}
Then $k(J[\ell^\infty]) \subseteq \ten(k,\ell)$.
\end{introtheorem}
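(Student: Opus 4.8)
Write $L:=k(\bmu_{\ell^\infty})$. Since $\ten(k,\ell)$ is by definition the maximal pro-$\ell$ extension of $L$ unramified away from $\ell$, it suffices to check that (i) $k(J[\ell^\infty])/L$ is unramified at every place not lying above $\ell$, and (ii) $\Gal(k(J[\ell^\infty])/L)$ is pro-$\ell$. (If $J=0$ there is nothing to prove, so assume $\dim J\geq 1$.) For (i): the Weil pairing furnishes, for each $m$, a perfect $G_k$-equivariant alternating pairing $J[\ell^m]\times J[\ell^m]\to\bmu_{\ell^m}$, so $L\subseteq k(J[\ell^\infty])$; and by the N\'eron--Ogg--Shafarevich criterion, hypothesis (a) forces $k(J[\ell^\infty])/k$, hence $k(J[\ell^\infty])/L$, to be unramified outside $\ell$. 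For (ii): the kernel of the reduction $\Aut(T_\ell J)\to\Aut(J[\ell])$ is a pro-$\ell$ group, so (ii) amounts to showing that the image of $G_L$ in $\Aut(J[\ell])$ is a finite $\ell$-group. This is the statement I would actually prove, so from here on I work with the residual representation $\bar\rho\colon G_k\to\Aut(J[\ell])$.

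The plan for $\bar\rho$ is to split $C$ into cyclic steps. Writing $f=\prod_i(x-a_i)^{e_i}\in k(x)$ and $y_s:=y^{\ell^{n-s}}$, the curve $C$ sits atop a chain $C=C_n\to C_{n-1}\to\cdots\to C_1\to C_0=\P^1$ in which $C_s$ has model $y_s^{\ell^s}=f(x)$ and each step $C_s\to C_{s-1}$ is cyclic of degree $\ell$. (Replacing $f$ by $f\cdot h^{\ell^n}$, $h\in k(x)^\times$, does not change $C$, so I may assume the zeros and poles of $f$ coincide with the points of $S$; then the branch locus of $C_s\to C_{s-1}$ maps into the $\ten(k,\ell)$-rational set $S$.) Decomposing $J$ up to isogeny through this chain gives $J\sim\prod_{s=1}^n W_s$, where $W_s$ is the ``new part'' of $\Jac(C_s)$ relative to the image of $\Jac(C_{s-1})$, an abelian variety carrying an action of $\Z[\zeta_{\ell^s}]$; integrally, pullback of divisors along $C_n\to C_s$ turns this into a $G_k$-stable filtration of $J[\ell]$ in block upper triangular form whose $s$-th diagonal block is $W_s[\ell]$ up to an isogeny of $\ell$-power degree. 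A group of block upper triangular matrices over $\F_\ell$ is a finite $\ell$-group exactly when each of its diagonal blocks is, and pro-$\ell$-ness of the image of $G_L$ in $\Aut(T_\ell A\otimes\Q_\ell)$ is an isogeny invariant; so it suffices to prove, for each $s$, that the image of $G_L$ on $W_s[\ell]$ is a finite $\ell$-group.

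For a fixed $s$ I would exploit the residual $\mu_{\ell^s}$-action. The $\F_\ell$-algebra $\F_\ell[\mu_{\ell^s}]$ is local with maximal ideal $(t-1)$, $t$ a tautological generator, and $G_k$ conjugates $t$ by the cyclotomic character, so $t-1$ transforms by a unit congruent to $\chi_{\mathrm{cyc}}$ modulo $(t-1)$. Hence the successive quotients of the $(t-1)$-adic filtration of $W_s[\ell]$ are Tate twists of one another, all isomorphic (as $G_L$-modules, since $\chi_{\mathrm{cyc}}|_{G_L}=1$) to the bottom graded piece $\mathrm{gr}^0$; the image of $G_L$ on $W_s[\ell]$ is therefore a finite $\ell$-group provided it is one on $\mathrm{gr}^0$ (the rest being unipotent, hence of $\ell$-power order). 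Finally $\mathrm{gr}^0$, being the bottom of this filtration, is built only from the branch locus $S$ and the local monodromy invariants $\ord_{a_i}(f)$: via Kummer theory on $\P^1-S$ it is, up to a cyclotomic twist, a subquotient of the permutation $\F_\ell$-module on $S$. As every point of $S$ is $\ten(k,\ell)$-rational and the monodromy invariants transform by $\chi_{\mathrm{cyc}}$, the group $G_{\ten(k,\ell)}$ acts trivially on $\mathrm{gr}^0$, so the image of $G_L$ on $\mathrm{gr}^0$ factors through the pro-$\ell$ group $\Gal(\ten(k,\ell)/L)$. Running this back up the tower shows the image of $G_L$ on $J[\ell]$ is a finite $\ell$-group, and with the first paragraph this yields $k(J[\ell^\infty])\subseteq\ten(k,\ell)$.

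I expect the main obstacle to be the precise form of the middle and last steps: producing the block upper triangular form at the level of honest $\ell$-torsion rather than merely up to isogeny (one must control the $\ell$-part of the isogeny degrees relating $J$ to $\prod_sW_s$ and handle the minor complications at $\ell=2$), and, above all, identifying the bottom graded piece $\mathrm{gr}^0$ finely enough to see that it depends only on the branch data. That identification is exactly what allows ``$S$ is $\ten(k,\ell)$-rational'' to replace Anderson--Ihara's much stronger demand that the coefficient $\lambda$ and the roots of $f$ lie in the combinatorial set $\mathbb{S}$; hypothesis (b) together with $\chi_{\mathrm{cyc}}|_{G_L}=1$ and the definition of $\ten(k,\ell)$ enter only there, while hypothesis (a) is used solely for the ramification statement (i).
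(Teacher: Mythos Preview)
Your strategy is essentially that of the paper: the same tower $C=C_n\to\cdots\to C_0=\P^1$, the same block upper triangular shape for $\rho_{J,\ell}$ with diagonal blocks $A_s[\ell]$ where $A_s=J_s/J_{s-1}$, the same $(\zeta_s-1)$-adic filtration of $A_s[\ell]$ whose graded pieces are successive Tate twists of the bottom one, and the same conclusion that the bottom piece is controlled by the branch set. The two obstacles you flag are precisely what the paper resolves: the block form holds on honest $\ell$-torsion (no isogeny fudge) because each $\pi_s^*\colon J_{s-1}\to J_s$ is a \emph{closed immersion}---this follows since $S[0]\neq\varnothing$ forces a totally ramified point, so $\pi_s$ has no nontrivial unramified subcover---and the bottom graded piece $W_s=A_s[\varepsilon]$ is constructed explicitly as the image of the degree-zero divisors supported on the reduced fibers $[\xi]_s$ over $\xi\in S[\less s]$, yielding $k(W_s)\subseteq k(S[\less s])$ directly; no special care at $\ell=2$ is needed for the theorem itself.
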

We note the following slightly weaker version of Theorem \ref{thm:main_theorem}, to show that a sufficient condition for the conclusion may be given entirely in terms of arithmetic data of the affine model of $C$.
\begin{introtheorem}
Suppose $C/k$ is a curve with affine model
\[ y^{\ell^n} = \lambda f(x), \qquad f(x) \in k[x], \]
where $f(x)$ is monic. Let $S$ be the set of roots of $f(x)$ whose multiplicity is not divisible by $\ell^n$. Suppose that $f$ splits completely over $\ten$ and
\begin{enumerate}[label=(\alph*), itemsep=0pt]
\item at least one point of $C$ is totally ramified,
\item $\lambda$ is an $\ell$-unit and every $s \in S$ is an $\ell$-integer,
\item for all distinct $s, s' \in S$, $s-s'$ is an $\ell$-unit.
\end{enumerate}
Then the Jacobian $J$ of $C$ satisfies $k(J[\ell^\infty]) \subseteq \ten$.
\end{introtheorem}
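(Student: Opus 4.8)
The plan is to deduce the statement from Theorem~\ref{thm:main_theorem}, after replacing the affine model by a $k$-birational one and then checking the two hypotheses of that theorem for $C$.

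First, simplify the model. Over $\bar k$ write $f=c\prod_\alpha(x-\alpha)^{m_\alpha}$. Since $G_k$ preserves multiplicities, grouping the roots into Galois orbits shows $g(x):=\prod_\alpha(x-\alpha)^{\lfloor m_\alpha/\ell^n\rfloor}\in k[x]$, and hence $h(x):=f(x)/g(x)^{\ell^n}=\prod_{s\in S}(x-s)^{r_s}\in k[x]$, where $r_s\in\{1,\dots,\ell^n-1\}$ is the residue of $m_s$ modulo $\ell^n$. The $k$-birational substitution $y\mapsto y\,g(x)$ carries the model to $y^{\ell^n}=\lambda h(x)$ and leaves $J$ unchanged, so we may use this model. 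Its branch locus is contained in $S\cup\{\infty\}$, with $\infty$ present exactly when $\ell^n\nmid\deg h$. As $f$ splits completely over $\ten$, every $s\in S$ is $\ten$-rational and $\infty$ is $k$-rational, so hypothesis~(b) of Theorem~\ref{thm:main_theorem} holds. A totally ramified point of $C$ has ramification index over $\P^1$ equal to the degree of $C\to\P^1$, which is possible only if this cyclic cover is connected; thus hypothesis~(a) of the present statement ensures $C$ is geometrically irreducible, as Theorem~\ref{thm:main_theorem} requires.

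Next, verify that $J$ has good reduction at every finite place $v\nmid\ell$. Conditions~(b) and~(c) control the branch divisor as follows. For $s\in S$ all Galois conjugates of $s$ lie in $S$ (which is $G_k$-stable), so the discriminant of the minimal polynomial of $s$ is, up to sign, a product of squares of differences of distinct conjugates of $s$, hence an $\ell$-unit by~(c); together with~(b) this shows $s$ generates an extension of $k$ unramified away from $\ell$ and is integral away from $\ell$. Combined with the remaining content of~(c) --- that distinct $s,s'\in S$ stay distinct in every residue field away from $\ell$ --- this makes the closure $D$ of the branch divisor in $\P^1_{\mathcal{O}_v}$ finite \'etale over $\mathcal{O}_v$ for each $v\nmid\ell$. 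Since $v\nmid\ell$ the cover $C\to\P^1$ is tame (its ramification indices $\ell^n/\gcd(\ell^n,r_s)$ are $v$-units), and $\lambda$ being a $v$-unit keeps $y^{\ell^n}=\lambda h(x)$ nondegenerate on the special fibre; by the standard local structure of tamely ramified cyclic covers along an \'etale branch divisor (Abhyankar's lemma), the normalization of $\P^1_{\mathcal{O}_v}$ in the function field of $C$ is smooth and proper over $\mathcal{O}_v$, possibly after a harmless unramified base extension (to supply roots of unity and $\ell^n$-th roots of units near the branch points) and after handling $\infty$ via $x\mapsto 1/x$ when it is branched. Hence $C$, and therefore $J$, has good reduction at $v$; since $v$ was arbitrary, hypothesis~(a) of Theorem~\ref{thm:main_theorem} holds. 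Applying that theorem yields $k(J[\ell^\infty])\subseteq\ten$.

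The crux is the good-reduction step: one must carry out the discriminant estimate that turns conditions~(b) and~(c) into \'etaleness of $D$ over $\mathcal{O}_v$, invoke tameness at $v\nmid\ell$ correctly, treat the possible branch point at $\infty$, and confirm that the auxiliary unramified base extension does not affect good reduction of $J$ (by the N\'eron--Ogg--Shafarevich criterion, inertia being unchanged). One should also take care that the initial simplification of the affine model is performed over $k$ and not merely over $\bar k$.
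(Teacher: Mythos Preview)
Your approach is essentially the same as the paper's: reduce to Theorem~\ref{thm:main_theorem} by verifying geometric irreducibility from the totally ramified point, $\ten$-rationality of the branch locus from the splitting of $f$, and good reduction away from $\ell$ from conditions (b) and (c). The paper's proof is a four-line sketch that simply asserts the good-reduction step, whereas you expand on it via \'etaleness of the branch divisor and the local structure of tame cyclic covers; your extra model-simplification over $k$ is not needed for invoking Theorem~\ref{thm:main_theorem} (which already allows $f\in k(x)$) but is harmless and helps make the branch locus explicit.
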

\begin{proof}
The existence of a totally ramified point in the covering guarantees that $C$ is geometrically irreducible. As $f$ splits over $\ten$, the branch locus of $C$ is rational over $\ten$. The conditions (b) and (c) guarantee that $C$, hence $J$, has good reduction away from $\ell$. Thus, Theorem \ref{thm:main_theorem} applies.
\end{proof}

Let $C/k$ be a superelliptic curve of degree $\ell^n$ satisfying the hypotheses of Theorem \ref{thm:main_theorem}, and let $J$ be the Jacobian of $C$. In \S1, we reduce to the case where $C$ has a convenient affine model. In order to more finely study the $G_k$-action on $J[\ell]$, we organize the branch points of $C$ by their ramification index. This is done in the beginning of \S2. Also in \S2, we decompose the covering into a composition of degree $\ell$ coverings and verify that the Jacobians $J_s$ ($0 \leq s \leq n$) of the intermediate curves may be viewed as nested subvarieties of $J$. Moreover, we demonstrate that the arithmetic of the $\ell$-power torsion of $J$ is closely related to the arithmetic of the $\ell$-power torsion of the successive quotients of the $J_s$. Already this demonstrates that the representation $\rho_{J,\ell}$ of Galois action on $J[\ell]$ has a block upper triangular shape, with the diagonal blocks determined by the quotients $A_s := J_s/J_{s-1}$ ($1 \leq s \leq n$).

In \S3, we demonstrate that the $\ell$-torsion of each quotient $A_s$ has a $G_k$-stable subspace $W_s$. We establish a relationship between the arithmetic of $S$ and that of $W_s$. In \S4--6, we demonstrate that the representations $\rho_{A_s,\ell}$ are themselves block diagonal, with each block a Tate twist of the representation attached to $W_s$. This is enough to prove Theorem \ref{thm:main_theorem}. In \S7, we study further the arithmetic of $S$ and $W_s$; these two sets usually, but not always, define the same extension of $k$. In \S8, we settle a question posed in \cite{Malmskog-Rasmussen:2016} regarding Picard curves over $\Q$. This also gives an example where Theorem \ref{thm:main_theorem} applies, but Theorem \ref{thm:AI} does not.

\subsection*{Notation}
For any $G_k$-set $T$, we let $k(T)$ denote the extension of $k$ generated by $T$ (that is, the field fixed by the kernel of the associated representation $G_k \to \Aut(T)$). For example, condition (b) of Theorem \ref{thm:main_theorem} is equivalent to $k(S) \subseteq \ten(k,\ell)$.

Once and for all, we fix an algebraic closure $\bar{k}$ of $k$, and let $G_k$ denote the Galois group of $\bar{k}/k$. For any curve $X/k$, we write $\Div X$ for the (free abelian) group of divisors generated by $X(\bar{k})$ and write $\Div^0 X$ for the subgroup of divisors of degree $0$. The divisor of a function $f \in k(X)$ is denoted $\divisor_X f$. We abuse notation and write $X$ in places where the geometric pullback $X \times_k \bar{k}$ is intended. For example, if $g \in \bar{k}(X)$, we denote its divisor $\divisor_X g$. This mild abuse of notation is also used for other $k$-varieties (e.g., the varieties $J_s$, $A_s$ defined in \S2). Likewise, for a $k$-morphism of varieties $\Pi \colon Y \to X$ we let $\Pi$ also denote the geometric pullback $\Pi \times_k \bar{k}$. 

\subsection*{Acknowledgments}
This work was supported by the Research Institute for Mathematical Sciences (RIMS), a Joint Usage/Research Center located in Kyoto University. Much of this work was completed while the first author was visiting RIMS, and he appreciates the Institute's support and hospitality. The second author was partly supported by JSPS {\sc KAKENHI} Grant Numbers JP22340006, JP15H03609 and JST CREST Grant Number JPMJCR15D4.

\section{Normalization of Affine Form}
Let $C/k$ be a curve. In this paper, we say $C$ is superelliptic of degree $N \geq 2$ if $C$ admits an affine model of the form
\begin{equation}\label{eqn:affine_form}
y^N = f(x), \quad f \in k(x).
\end{equation}
The morphism $\Pi \colon C \to \P^1$ corresponding to the $x$-coordinate of this affine model is a branched covering of $\P^1$ defined over $k$. After base change to a finite extension which contains $\bmu_N$, $\Pi$ is a Galois cyclic covering with Galois group $\Z/N\Z$. Note: some authors define a curve $C/k$ to be superelliptic if $C$ admits a branched covering which yields a Galois cyclic covering after appropriate base change. These two definitions are \emph{not} equivalent in general, as the existence of such a covering does not guarantee an affine model of the form \eqref{eqn:affine_form}. (This fact may be mildly surprising to the reader, since the two conditions \emph{are} equivalent in many cases of interest, such as hyperelliptic curves or Picard curves.)

Suppose $C/k$ is a superelliptic curve satisfying the hypotheses of Theorem \ref{thm:main_theorem}. We start by reducing to the case where $f(x)$ is a polynomial and $\infty$ is not a branch point. The contents of this lemma are quite well-known; we simply provide the proof for convenience.
\begin{lemma}\label{lemma:niceform}
Under the hypotheses of Theorem \ref{thm:main_theorem}, without loss of generality we may assume that $C$ has an affine model of the form $y^{\ell^n} = f(x)$ such that $f \in k[x]$, the set $S$ coincides with the set of roots of $f$ (in particular, $\infty \not\in S$), and the multiplicity $n_\xi$ of any root $\xi$ of $f(x)$ satisfies $0 < n_\xi < \ell^n$.
\end{lemma}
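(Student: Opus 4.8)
The plan is to manipulate the given affine model $y^{\ell^n} = f(x)$ by a sequence of elementary transformations, each of which changes neither the isomorphism class of $C$ over $k$ nor the branch locus $S$, until $f$ has the desired shape. The transformations are: (i) replacing $f$ by $c \cdot f$ for $c \in k^\times$ and simultaneously rescaling $y$, which is harmless; (ii) replacing $f(x)$ by $f(x) \cdot g(x)^{\ell^n}$ for $g \in k(x)^\times$ and rescaling $y \mapsto y/g(x)$, which changes the multiplicities $n_\xi$ only modulo $\ell^n$ and does not alter the set of points where the covering ramifies; and (iii) applying a linear fractional transformation in $\mathrm{PGL}_2(k)$ to the $x$-coordinate to move $\infty$ away from $S$ (possible since $S \neq \P^1(\bar k)$ and $S$ is $G_k$-stable, so its complement contains a $k$-rational point, or at least one does after a further harmless finite base change — but here we must be slightly careful, see below).

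First I would use a transformation of type (iii) to arrange that $\infty \notin S$; this is possible because $S$ is finite and $G_k$-stable, so $\P^1(k) \setminus S$ is infinite (as $k$ is a number field), and any $k$-rational point outside $S$ can be sent to $\infty$ by an element of $\mathrm{PGL}_2(k)$. Next, writing $f(x) = \prod_i (x - \xi_i)^{m_i} \cdot u$ in its factorization over $\bar k$ (grouped into $G_k$-orbits so that the construction descends to $k$), I would clear denominators and remove $\ell^n$-th power factors using transformation (ii): replacing each exponent $m_i$ by its residue $n_{\xi_i} \in \{0, 1, \dots, \ell^n - 1\}$ modulo $\ell^n$. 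After discarding the factors with $n_{\xi_i} = 0$ (which contribute nothing to $S$, since the covering is unramified there) and absorbing the leftover constant via transformation (i), we obtain $f \in k[x]$ with all multiplicities strictly between $0$ and $\ell^n$. It then remains to check that the branch locus of this new model is exactly the root set of the new $f$: ramification over a finite point $\xi \neq \infty$ occurs precisely when $\ell^n \nmid n_\xi$, i.e.\ when $n_\xi \in \{1, \dots, \ell^n-1\}$, which is now all of them; and we arranged $\infty \notin S$, so there is no ramification at $\infty$ to worry about (equivalently, $\sum_i n_{\xi_i} \equiv 0 \pmod{\ell^n}$, which one can also force by a final adjustment of the model if necessary).

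The one genuine subtlety — and the step I expect to be the main obstacle — is the interaction between moving $\infty$ out of $S$ and the condition $0 < n_\xi < \ell^n$ at the (formerly infinite, now finite) point: after applying the linear fractional transformation, the point that used to be $\infty$ may acquire a multiplicity that needs to be reduced modulo $\ell^n$, and one must confirm this reduction is compatible with keeping the model over $k$ and with the geometric irreducibility of $C$ (which is preserved because none of these operations changes the function field $k(C)$, only its presentation). I would also note that the hypotheses (a) and (b) of Theorem \ref{thm:main_theorem} are preserved: good reduction away from $\ell$ is an intrinsic property of $J$, unaffected by change of model, and $k(S) \subseteq \ten$ is unchanged since $S$ itself is unchanged as a $G_k$-set under transformations (i) and (ii), and under (iii) it is transformed by a $k$-rational automorphism of $\P^1$, hence $k(S)$ is unchanged. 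Once all of this is in place, the new model has exactly the form asserted in the lemma, completing the reduction.
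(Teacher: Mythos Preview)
Your approach matches the paper's: first apply a $k$-rational linear fractional transformation to move $\infty$ outside $S$ (using that $\P^1(k)-S$ is infinite), then multiply $f$ by an $\ell^n$-th power of a suitable element of $k(x)^\times$ to reduce every finite multiplicity into $\{0,\dots,\ell^n-1\}$, with the grouping by $G_k$-orbit (equivalently, by common exponent) ensuring the adjustment is defined over $k$. One small correction: your transformation (i) is not harmless in general---rescaling $y$ over $k$ only changes $f$ by an $\ell^n$-th power in $k^\times$, not by an arbitrary constant---but the lemma does not require $f$ to be monic, so step (i) is simply unnecessary and the argument goes through without it.
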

\begin{proof}
Select any point $\xi_0$ in $\P^1(k) - S$. If $\xi_0 = \infty$, we do nothing. If $\xi_0 \neq \infty$, we apply the coordinate change $x \mapsto (x - \xi_0)^{-1}$, and we may now be sure that $\infty$ is not a branch point of the covering. As $C$ is defined over $k$, there exist $\lambda \in k^\times$ and monic polynomials $f_i \in k[x]$ such that $f = \lambda f_1^{\vphantom{-1}} f_2^{-1}$. For any $\xi \in \P^1(\bar{k})$, define
\[ e_\xi(f) := \begin{cases} \ord_{(x - \xi)} f & \xi \neq \infty \\ \deg f_1 - \deg f_2 & \xi = \infty \end{cases}. \]
Let $S_f = \Supp \divisor_{\P^1} f - \{\infty \}$; this is precisely the set of $\xi \neq \infty$ for which $e_{\xi}(f) \neq 0$. The branch locus $S$ need not agree with $S_f$, but can be described as
\[ S = \{ \xi \in S_f : e_\xi(f) \not\equiv 0 \pmod{\ell^n} \}. \]
Let $k'/k$ be a finite extension over which $f_1$ and $f_2$ split completely. Then $S_f \subset k'$ and $f$ has the form
\[ f(x) = \lambda \prod_{\xi \in S_f} (x- \xi)^{e_\xi(f)}. \]
For any $e \in \Z$, let $\Xi_e := \{ \xi \in S_f : e_\xi(f) = e \}$. By definition, $\Xi_0 = \varnothing$. Moreover, as the polynomials $f_i$ are $k$-rational, the set $\Xi_e$ is $G_k$-stable for any $e \in \Z$. For any integer $e$, let
\[ g_e := \prod_{\xi \in \Xi_e} (x-\xi). \]
Note that for all but finitely many $e$, the set $\Xi_e$ is empty and $g_e = 1$. The Galois stability of $\Xi_e$ implies $g_e \in k[x]$. For each $e$, define
\[ \hat{e} := \left\lceil - \frac{e}{\ell^n} \right\rceil. \]
Then always $0  \leq e + \hat{e} \ell^n < \ell^n$. Applying the coordinate change
\[ y \mapsto y \cdot \prod_{e \in \Z} g_e^{-\hat{e}} \]
we obtain a new model for $C$ with affine equation
\[ y^{\ell^n} = \hat{f}(x), \qquad \qquad \hat{f} = f \cdot \prod_e g_e^{\hat{e} \cdot \ell^n} \in k[x]. \]
Consequently,
\[ \hat{f} = \lambda \prod_{\xi \in S} (x - \xi)^{n_\xi}, \qquad 0 < n_\xi < \ell^n, \]
which demonstrates $C$ has a model of the desired form.
\end{proof}
For the remainder, we now assume $f(x)$ has the form guaranteed by Lemma \ref{lemma:niceform}. For each $0 \leq s \leq n$, let $C_s/k$ be the projective curve given by the affine model
\[ C_s : (y_s)^{\ell^s} = f(x). \]
Thus $C_0 = \P^1$ and $C_n = C$. The covering $\Pi$ decomposes into a composition of $k$-morphisms, each of degree $\ell$:
\begin{equation}\label{eqn:tower}
\begin{tikzcd}[column sep=0.3in]
C = C_n \arrow{r}{\pi_n} & C_{n-1} \arrow{r}{\pi_{n-1}} & C_{n-2} \arrow{r} & \cdots \arrow{r} & C_1 \arrow{r}{\pi_1} & C_0 = \P^1
\end{tikzcd}
\end{equation}
This induces a corresponding tower of function fields
\[ k(x) = k(x, y_0) \subset k(x, y_1) \subset \cdots \subset k(x,y_{n-1}) \subset k(x,y_n) = k(x,y), \]
subject to the relations $y_0 = f$, $y_s^\ell = y_{s-1}$ for each $1 \leq s \leq n$, and $y_n = y$.

For each $1 \leq s \leq n$ we let $\Pi_s := \pi_1 \circ \cdots \circ \pi_s \colon C_s \to \P^1$. (So $\Pi = \Pi_n$.) By this arrangement, the branch locus of $\Pi_s$ is necessarily a subset of the branch locus of $\Pi_t$ whenever $1 \leq s \leq t \leq n$.

For $\xi \in S$, set $n_\xi := \ord_{(x - \xi)} f$, and set
\begin{equation}\label{eqn:n_infty}
n_\infty := \sum_{\xi \in S} n_\xi = \deg f.
\end{equation}
Thus,
\begin{equation}\label{eqn:f_divisor}
\divisor_{\P^1} f = \sum_{\xi \in S} n_\xi \cdot \xi - n_\infty \cdot \infty.
\end{equation}
As $\infty$ is not a branch point of $\Pi$, $n_\infty \equiv 0 \pmod{\ell^n}$.

\section{Stratification of the branch locus}\label{sec:S}

We now organize the branch locus of $\Pi$ by ramification index. For any $0 \leq j \leq n-1$, we define:
\[ \begin{split}
S[j] & :=  \{ \xi \in S : \ord_\ell n_\xi = j \} \\
r_j & :=  \# S[j]
\end{split}\]
Thus, $\sum_{j=0}^{n-1} r_j = \#S$. Further, we define for any $1 \leq s \leq n$ and any $0 \leq t \leq (n-1)$,
\[S[\less s] :=  \bigcup_{j=0}^{s-1} S[j], \qquad \qquad S[\lesseq t] := \bigcup_{j=0}^t S[j].  \]
By definition, any point chosen from $\Pi_n^{-1}(S[j])$ will have ramification index $\ell^{n-j}$. We observe that the branch set for $\Pi_s$ is exactly $S[\less s]$, and that the set $S[s]$ is $G_k$-stable, since $\Pi_n$ is defined over $k$.
\begin{lemma}\label{lemma:S0-nonempty}
The set $S[0]$ is non-empty. In fact, $\# S[0] \geq 2$.
\end{lemma}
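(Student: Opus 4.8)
The plan is to derive both assertions from two facts already in hand: the geometric irreducibility of $C$, and the congruence $n_\infty \equiv 0 \pmod{\ell^n}$ noted after \eqref{eqn:f_divisor}. We work with the normalized model of Lemma~\ref{lemma:niceform}, so $f = \lambda\prod_{\xi\in S}(x-\xi)^{n_\xi}$ with $S$ the set of roots of $f$. For the non-emptiness I would argue contrapositively: if $S[0] = \varnothing$, then $\ell\mid n_\xi$ for every $\xi\in S$, and since $\bar k$ is algebraically closed $\lambda$ is an $\ell$th power in $\bar k$, so $f = g^\ell$ for some $g\in\bar k[x]$. Then
\[ y^{\ell^n} - f \;=\; \bigl(y^{\ell^{n-1}}\bigr)^{\ell} - g^{\ell} \;=\; \prod_{\zeta\in\bmu_\ell}\bigl(y^{\ell^{n-1}} - \zeta g\bigr) \]
is a product of $\ell\geq 2$ polynomials in $\bar k[x,y]$, each of positive degree in $y$; hence the affine curve $y^{\ell^n} = f(x)$, and therefore $C\times_k\bar k$, is reducible, contradicting the hypothesis. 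So $S[0]\neq\varnothing$.

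For the bound $\#S[0]\geq 2$ I would invoke the congruence. Recall $\sum_{\xi\in S}n_\xi = n_\infty \equiv 0\pmod{\ell^n}$ (see \eqref{eqn:n_infty} and the line following it), so in particular $\sum_{\xi\in S}n_\xi\equiv 0\pmod{\ell}$. Every $\xi\in S\setminus S[0]$ has $\ell\mid n_\xi$, so this reduces to $\sum_{\xi\in S[0]}n_\xi\equiv 0\pmod{\ell}$. Were $S[0] = \{\xi_0\}$ a single point, we would get $\ell\mid n_{\xi_0}$, contradicting $\xi_0\in S[0]$. Together with the non-emptiness, $\#S[0]\geq 2$.

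I do not expect a genuine obstacle. The one point worth care is the passage ``$f\in\bar k(x)^\ell \Rightarrow C\times_k\bar k$ reducible,'' which is transparent in characteristic zero (where $y^{\ell^n}-f$ is separable in $y$ and the exceptional case $4\mid\ell^n$ of the irreducibility criterion for $Y^m-a$ is vacuous, since $-4$ is a fourth power in $\bar k$). Alternatively, one can recast the whole lemma through the cover $\pi_1\colon C_1\to\P^1$: once $f\notin\bar k(x)^\ell$ is established, $C_1$ is connected of degree $\ell\geq 2$ over $\P^1$ and branched exactly over $S[\less 1] = S[0]$, and since $\P^1_{\bar k}$ and $\mathbb{A}^1_{\bar k}$ are simply connected in characteristic zero, a connected cover branched over at most one point would be trivial --- so $\#S[0]\geq 2$.
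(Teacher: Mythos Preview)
Your proof is correct and follows essentially the same approach as the paper: both parts use the factorization of $y^{\ell^n}-g^\ell$ over a field containing $\bmu_\ell$ and an $\ell$th root of $\lambda$ for non-emptiness, and the congruence $\sum_{\xi\in S} n_\xi \equiv 0 \pmod{\ell}$ for the lower bound. The only cosmetic difference is that you pass directly to $\bar{k}$ while the paper works over a finite extension $L$; your closing alternative via the simply-connectedness of $\P^1_{\bar k}$ and $\mathbb{A}^1_{\bar k}$ is a pleasant extra but not in the paper.
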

\begin{proof}
Suppose $S[0]$ were empty. Then for every $\xi \in S$, $\ell \mid n_\xi$. Let $L/k$ be an extension containing a primitive $\ell$th root of unity $\zeta$ and an $\ell$th root of $\lambda$. Then there exists $h \in L[x]$ such that $h^\ell = f$. Moreover, the curve $C \times_k L$ must be reducible, as it is the variety defined by the vanishing of
\[ y^{\ell^n} - h^\ell = \prod_{r=0}^{\ell - 1} \bigl( y^{\ell^{n-1}} - \zeta^r h \bigr). \]
By contradiction ($C$ is geometrically irreducible by assumption), $S[0]$ cannot be empty.

For the second claim, notice that $\sum_{\xi \in S} n_\xi = n_\infty \equiv 0 \pmod{\ell}$. Since $\ell \mid n_\xi$ for each $\xi \in S - S[0]$, this implies that $0 \equiv \sum_{\xi \in S[0]} n_\xi \pmod{\ell}$. But each point $\xi \in S[0]$ has $\ell \nmid n_\xi$ by definition, and so there must be more than one point in $S[0]$.
\end{proof}

The tower of coverings \eqref{eqn:tower} also induces (canonically and contravariantly) a sequence of homomorphisms on the Jacobians of the curves $C_s$. For each $0 \leq s \leq n$, set $J_s := \Jac(C_s)$, and let $g_s := \dim J_s$. We have
\[ \begin{tikzcd}
\{0 \} = J_0 \arrow{r}{\pi_1^*} & J_1 \arrow{r}{\pi_2^*} & \cdots \arrow{r} & J_{n-1} \arrow{r}{\pi_n^*} & J_n = J.
\end{tikzcd} \]
We claim that each of these morphisms $\pi_s^*$ is a closed immersion. Observe that, as $S[0]$ is non-empty, $\Pi_s$ is totally ramified over at least one point for each $s \geq 1$. Thus, each morphism $\pi_s$ is totally ramified somewhere, and so cannot admit a nontrivial unramified subcover. That $\pi_s^*$ is a closed immersion now follows from the following more general fact.
\begin{proposition}\label{prop:closed_immersion}
Suppose $X$ and $Y$ are proper smooth curves over an algebraically closed field of characteristic zero. Suppose $\rho \colon Y \to X$ is a finite morphism, possibly branched. Let $\rho' \colon Y' \to X$ be the maximal abelian subcovering of $\rho$ which is everywhere unramified. Then the following are equivalent:
\begin{enumerate}[label=\emph{(\roman*)}, itemsep=1pt]
\item The induced morphism $\rho^* \colon \Jac(X) \to \Jac(Y)$ is a closed immersion.
\item The induced morphism on torsion, $\Jac(X)_{\mathrm{tors}} \to \Jac(Y)_{\mathrm{tors}}$, is injective.
\item $Y' = X$.
\end{enumerate}
\end{proposition}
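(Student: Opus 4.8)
The plan is to prove (i) $\Leftrightarrow$ (ii) by general facts about abelian varieties and (ii) $\Leftrightarrow$ (iii) by Kummer theory for cyclic coverings. For (i) $\Leftrightarrow$ (ii): the pushforward $\rho_*$ on Jacobians satisfies $\rho_* \circ \rho^* = [\deg\rho]$ on $\Jac(X)$, so $\ker\rho^*$ is contained in $\Jac(X)[\deg\rho]$ and is in particular a finite group scheme. Since the ground field has characteristic zero, this kernel is étale, hence reduced, hence equal to the constant group scheme on its finite group of $\bar{k}$-points, all of which are torsion points of $\Jac(X)$. Thus $\rho^*$ has trivial kernel if and only if it annihilates no nonzero torsion point, i.e. if and only if (ii) holds. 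Finally, a homomorphism of abelian varieties with trivial (scheme-theoretic) kernel is a monomorphism and, being proper, is then a closed immersion, while a closed immersion obviously has trivial kernel; so (i) is equivalent to $\ker\rho^* = 0$, and hence to (ii).

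For (ii) $\Leftrightarrow$ (iii) I will argue the contrapositive in both directions, the common tool being the following standard dictionary. For $m \geq 2$, Kummer theory identifies $\Jac(X)[m] = \mathrm{Pic}(X)[m]$ with the group $\Het(X, \mu_m)$ of $\mu_m$-torsors on $X$; under this identification a point $P$ of exact order $m$ corresponds to a \emph{connected} cyclic étale covering $\varpi_P \colon Z_P \to X$ of degree $m$, and conversely every connected cyclic étale covering of $X$ of degree $m$ arises in this way. The tautological section of $Z_P$ trivializes $\varpi_P^* L_P$, where $L_P$ is the line bundle attached to $P$, so $\varpi_P^* P = 0$ in $\Jac(Z_P)$; moreover, for any finite morphism $\rho \colon Y \to X$, the fibre product $Z_P \times_X Y \to Y$ is the $\mu_m$-torsor with class $\rho^* P$, hence is trivial — equivalently, admits a section — precisely when $\rho^* P = 0$.

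Suppose first that (iii) fails, i.e. $Y' \neq X$. Then $\rho$ factors through the nontrivial abelian étale covering $Y' \to X$; choosing a surjection of the finite abelian group $\Gal(Y'/X)$ onto $\Z/p\Z$ for a prime $p$ and passing to the corresponding intermediate covering yields a connected cyclic étale covering $Z \to X$ of degree $p$ through which $\rho$ still factors. Writing $Z = Z_P$ with $P \in \Jac(X)[p]$ of exact order $p$ (so $P \neq 0$), the factorization of $\rho$ through $\varpi_P$ together with $\varpi_P^* P = 0$ gives $\rho^* P = 0$, so $\rho^*$ annihilates a nonzero torsion point and (ii) fails. Conversely, suppose (ii) fails and pick a nonzero torsion point $P \in \Jac(X)$ of exact order $m \geq 2$ with $\rho^* P = 0$. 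Then $\varpi_P \colon Z_P \to X$ is a nontrivial connected abelian (indeed cyclic) étale covering, and, as $\rho^* P = 0$, the torsor $Z_P \times_X Y \to Y$ is trivial; a section of it composed with the projection to $Z_P$ exhibits $\rho$ as factoring through $\varpi_P$. Hence $\rho$ has a nontrivial abelian étale subcovering, so $Y' \neq X$ and (iii) fails.

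The only genuinely non-formal point, I expect, is the use of the Kummer dictionary: reinterpreting the vanishing $\rho^* P = 0$ as triviality of a $\mu_m$-torsor on $Y$ and extracting from it the factorization $Y \to Z_P$. The bookkeeping to be careful about there is that $Z_P$ is connected exactly when $P$ has exact order $m$, which is why one reduces to a prime-order point when pulling a cyclic subcovering out of $Y' \to X$. Everything else — the finiteness and étaleness of $\ker\rho^*$, and the fact that a homomorphism of abelian varieties with zero kernel is a closed immersion — is standard.
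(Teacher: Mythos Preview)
Your proof is correct. The underlying idea matches the paper's---both rest on the Kummer-theoretic identification $\Jac(X)[m] \cong \Het(X,\bmu_m)$ linking torsion of the Jacobian to abelian \'etale coverings---but the packaging differs. The paper passes to the direct limit over all $m$ to obtain $\Jac(X)_{\mathrm{tors}} \cong \bigl(\pi_1(X)^{\mathrm{ab}}\bigr)^\vee$ and then uses Pontryagin duality: injectivity on duals is surjectivity on $\pi_1^{\mathrm{ab}}$, whose cokernel is exactly $\Gal(Y'/X)$. You instead work one torsion point at a time, translating $\rho^*P = 0$ into triviality of the pulled-back $\bmu_m$-torsor and extracting an explicit factorization $Y \to Z_P \to X$. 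Your route is more hands-on and avoids invoking $\pi_1$ or duality; the paper's is more compact and conceptual. You also supply the argument for (i) $\Leftrightarrow$ (ii) that the paper simply labels ``well-known,'' which is a helpful addition.
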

\begin{proof}
The equivalence between (i) and (ii) is well-known. From \cite[III, Prop.~4.11]{Milne:1980} (see also the following discussion, pgs.~126--127), we have for any $N \geq 1$ the isomorphisms
\[ \Jac(X)[N] \cong \Het(X, \Z/N\Z(1)) \cong \Het(X, \bmu_N). \]
Here, $\Z/N\Z(1)$ denotes the Tate twist (recalled in \S6) by the cyclotomic character modulo $N$. Let ${}^\vee$ denote the Pontryagin dual. Taking a direct limit with respect to $N$, we obtain
\[ \Jac(X)_{\mathrm{tors}} = \Het(X, \Q/\Z(1)) = \Hom( \pi_1(X), \Q/\Z(1)) = \bigl(\pi_1(X)^\mathrm{ab} \bigr)^\vee. \]
Thus, (ii) is equivalent to $Y \to X$ inducing an inclusion
\[ \bigl(\pi_1(X)^\mathrm{ab} \bigr)^\vee \longrightarrow \bigl( \pi_1(Y)^\mathrm{ab} \bigr)^\vee. \]
But this is injective if and only if the homomorphism $\pi_1(Y)^\mathrm{ab} \to \pi_1(X)^\mathrm{ab}$ is surjective, and the cokernel of this final morphism is precisely $\Gal(Y'/X)$. Thus, condition (ii) is equivalent to $Y' = X$.
\end{proof}

Thus, for the remainder, we may and do view $J_{s-1}$ as an abelian subvariety of $J_s$, and define for each $s \geq 1$,
\[ A_s := J_s / J_{s-1}, \qquad h_s := \dim A_s = g_s - g_{s-1}. \]
\begin{proposition}\label{prop:l-torsion}
For each $1 \leq s \leq n$, $A_s[\ell] \cong J_s[\ell]/J_{s-1}[\ell]$.
\end{proposition}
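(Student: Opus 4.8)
The plan is to read off the claimed isomorphism from the short exact sequence of abelian varieties over $k$
\[ 0 \longrightarrow J_{s-1} \xrightarrow{\ \pi_s^*\ } J_s \longrightarrow A_s \longrightarrow 0, \]
in which $\pi_s^*$ is a closed immersion by Proposition~\ref{prop:closed_immersion} and $A_s$ is the quotient by construction. First I would apply the snake lemma to the multiplication-by-$\ell$ endomorphism of this sequence. Because $k$ has characteristic zero, multiplication by $\ell$ is an isogeny on each of $J_{s-1}$, $J_s$, $A_s$, hence surjective on $\bar k$-points; in particular all three cokernels appearing in the snake sequence vanish, so that six-term sequence collapses to
\[ 0 \longrightarrow J_{s-1}[\ell] \longrightarrow J_s[\ell] \longrightarrow A_s[\ell] \longrightarrow 0. \]
This is precisely the assertion $A_s[\ell] \cong J_s[\ell]/J_{s-1}[\ell]$, and since every morphism in sight is defined over $k$, the identification is automatically $G_k$-equivariant (which is what one actually wants for the block-triangular description of $\rho_{J,\ell}$).

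If one prefers to avoid invoking the snake lemma, the same conclusion can be assembled by a direct chase. Injectivity of $J_{s-1}[\ell] \hookrightarrow J_s[\ell]$ is immediate from Proposition~\ref{prop:closed_immersion}. The composite $J_{s-1} \hookrightarrow J_s \twoheadrightarrow A_s$ is zero, and exactness of the displayed sequence on $\bar k$-points shows that the kernel of $J_s[\ell] \to A_s[\ell]$ is exactly $J_s[\ell] \cap J_{s-1} = J_{s-1}[\ell]$, so $J_s[\ell]/J_{s-1}[\ell]$ injects into $A_s[\ell]$. For surjectivity, lift $a \in A_s[\ell]$ to some $x \in J_s(\bar k)$; then $\ell x$ maps to $\ell a = 0$ in $A_s$, so $\ell x \in J_{s-1}(\bar k)$; using $\ell$-divisibility of $J_{s-1}(\bar k)$ write $\ell x = \ell y$ with $y \in J_{s-1}(\bar k)$; then $x - y \in J_s[\ell]$ maps to $a$.

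I do not expect a substantial obstacle here; the proposition is essentially formal once the sequence $0 \to J_{s-1} \to J_s \to A_s \to 0$ is in hand. The one point that must not be skipped — and the only place where a hypothesis is genuinely used — is the surjectivity of $J_s[\ell]\to A_s[\ell]$, which relies on $\ell$ being invertible in $k$ (equivalently, on the $\ell$-divisibility of $J_{s-1}(\bar k)$) and would fail for the analogous statement in characteristic $\ell$. Everything else is bookkeeping.
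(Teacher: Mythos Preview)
Your proposal is correct and matches the paper's own proof essentially verbatim: the paper applies the Snake Lemma to multiplication by $\ell$ on the short exact sequence $0 \to J_{s-1} \to J_s \to A_s \to 0$, using divisibility of abelian varieties to kill the cokernels, yielding $0 \to J_{s-1}[\ell] \to J_s[\ell] \to A_s[\ell] \to 0$. Your additional direct-chase paragraph is a nice spelled-out version of the same content.
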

\begin{proof}
To begin, we certainly have a filtration on $\ell$-torsion:
\[ \{0\} = J_0[\ell] \subseteq J_1[\ell] \subseteq \cdots \subseteq J_n[\ell] = J[\ell]. \]
Now, as abelian varieties are divisible as groups, the vertical morphisms in the following diagram are all surjective:
\[ \begin{tikzcd}
0 \arrow{r} & J_{s-1} \arrow{r} \arrow{d}[swap]{[\ell]} & J_s \arrow{r} \arrow{d}[swap]{[\ell]} & A_s \arrow{r} \arrow{d}[swap]{[\ell]} & 0 \\
0 \arrow{r} & J_{s-1} \arrow{r} & J_s \arrow{r} & A_s \arrow{r} & 0
\end{tikzcd}
\]
As the cokernel of $[\ell]$ is trivial, the Snake Lemma yields an exact sequence
\[ \begin{tikzcd}
0 \arrow{r} & J_{s-1}[\ell] \arrow{r} & J_s[\ell] \arrow{r} & A_s[\ell] \arrow{r} & 0,
\end{tikzcd} \]
and the claimed isomorphism holds.
\end{proof}

\begin{proposition}\label{prop:J_As_equiv}
Suppose that the variety $J$ has good reduction away from $\ell$. Let $\ten = \ten(k, \ell)$. Then the following are equivalent:
\begin{enumerate}[label=\emph{(\roman*)}, itemsep=1pt]
\item $k(J[\ell^\infty]) \subseteq \ten$.
\item $k(J[\ell]) \subseteq \ten$.
\item For every $1 \leq s \leq n$, $k(A_s[\ell]) \subseteq \ten$.
\end{enumerate}
\end{proposition}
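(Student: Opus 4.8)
The plan is to prove the cycle of implications (i) $\Rightarrow$ (ii) $\Rightarrow$ (iii) $\Rightarrow$ (i). The implication (i) $\Rightarrow$ (ii) is immediate from $J[\ell] \subseteq J[\ell^\infty]$. For (ii) $\Rightarrow$ (iii), I would note that, since $C_s$ and hence $J_s$ is defined over $k$ and $\pi_s^*$ is $G_k$-equivariant, each $J_s[\ell]$ is a $G_k$-submodule of $J[\ell]$, so $k(J_s[\ell]) \subseteq k(J[\ell])$. By Proposition \ref{prop:l-torsion} we have a $G_k$-equivariant isomorphism $A_s[\ell] \cong J_s[\ell]/J_{s-1}[\ell]$, which presents $A_s[\ell]$ as a quotient $G_k$-module of $J_s[\ell]$; hence $k(A_s[\ell]) \subseteq k(J_s[\ell]) \subseteq k(J[\ell]) \subseteq \ten$.

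The substantive implication is (iii) $\Rightarrow$ (i). Put $K := k(\bmu_{\ell^\infty})$. Since the Weil pairing gives $K \subseteq k(J[\ell^\infty])$, and since $\ten$ is by definition the \emph{maximal} pro-$\ell$ extension of $K$ unramified away from $\ell$, it suffices to show that $k(J[\ell^\infty])/K$ is pro-$\ell$ and unramified away from $\ell$. The ``unramified away from $\ell$'' part uses the good reduction hypothesis: by the criterion of Néron--Ogg--Shafarevich, good reduction of $J$ away from $\ell$ implies $k(J[\ell^\infty])/k$ is unramified away from $\ell$, and $K/k$ is likewise unramified away from $\ell$, so the compositum (which is $k(J[\ell^\infty])$) is unramified away from $\ell$ over $k$, hence over $K$. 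For the ``pro-$\ell$'' part I would interpose the tower of Galois extensions of $K$
\[ K \;\subseteq\; K\cdot\textstyle\prod_s k(A_s[\ell]) \;\subseteq\; K\cdot k(J[\ell]) \;\subseteq\; k(J[\ell^\infty]), \]
where each $k(A_s[\ell]) \subseteq k(J[\ell])$ by the argument above, and check each step is pro-$\ell$. The bottom step is pro-$\ell$ because $\prod_s k(A_s[\ell]) \subseteq \ten$ by (iii), so $K\cdot\prod_s k(A_s[\ell]) \subseteq \ten$, which is pro-$\ell$ over $K$. The top step is pro-$\ell$ because $\Gal(k(J[\ell^\infty])/K\cdot k(J[\ell]))$ is a subgroup of $\Gal(k(J[\ell^\infty])/k(J[\ell]))$, which, after choosing a $\Z_\ell$-basis of $T_\ell J$, is a subgroup of the principal congruence subgroup $\ker\bigl(GL_{2g}(\Z_\ell) \to GL_{2g}(\F_\ell)\bigr)$ with $g = \dim J$, a pro-$\ell$ group.

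The middle step is the place where the filtration of \S\ref{sec:S} is essential, and it is the main point to get right. The compositum $\prod_s k(A_s[\ell])$ is the fixed field of the kernel of the $G_k$-action on $\bigoplus_s A_s[\ell]$, and Proposition \ref{prop:l-torsion} identifies this, $G_k$-equivariantly, with the associated graded of the $G_k$-filtration $0 = J_0[\ell] \subseteq \cdots \subseteq J_n[\ell] = J[\ell]$. Choosing an $\F_\ell$-basis of $J[\ell]$ adapted to this filtration, any element of $\Gal\bigl(k(J[\ell])/\prod_s k(A_s[\ell])\bigr)$ is represented by a block upper-triangular matrix with identity blocks on the diagonal; such matrices form a subgroup of the strictly upper-triangular unipotent matrices in $GL_{2g}(\F_\ell)$, which has $\ell$-power order. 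Hence $k(J[\ell])/\prod_s k(A_s[\ell])$ is a finite $\ell$-extension, and by the translation theorem so is its base change $K\cdot k(J[\ell])/K\cdot\prod_s k(A_s[\ell])$ (its Galois group embeds into the previous one). Since an extension built as a tower of Galois extensions with pro-$\ell$ Galois groups is again pro-$\ell$, the tower above shows $k(J[\ell^\infty])/K$ is pro-$\ell$, and by maximality of $\ten$ we conclude $k(J[\ell^\infty]) \subseteq \ten$. The only real subtlety throughout is the bookkeeping with composita and base change of Galois groups — in particular, keeping straight that the relevant pro-$\ell$-ness is measured relative to $K = k(\bmu_{\ell^\infty})$ and not to $k$, since $k(\bmu_\ell)/k$ need not be an $\ell$-extension.
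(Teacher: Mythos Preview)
Your argument is correct and essentially self-contained, but it is organized differently from the paper's. The paper proves (i) $\Leftrightarrow$ (ii) in one breath (Serre--Tate plus the fact that $\ker\bigl(GL(T_\ell J)\to GL(J[\ell])\bigr)$ is pro-$\ell$), and then proves (ii) $\Leftrightarrow$ (iii) by invoking an external criterion \cite[Lemma~3.3]{Rasmussen-Tamagawa:2017}: for an abelian variety $B/k$, one has $k(B[\ell])\subseteq\ten$ if and only if $\rho_{B,\ell}$ is (up to conjugacy) upper triangular with powers of the mod-$\ell$ cyclotomic character on the diagonal. With that criterion, the block-upper-triangular shape of $\rho_{J,\ell}$ coming from Proposition~\ref{prop:l-torsion} makes (ii) $\Leftrightarrow$ (iii) immediate. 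Your route instead argues directly from the defining maximality of $\ten$: you build the tower $K\subseteq K\cdot\prod_s k(A_s[\ell])\subseteq K\cdot k(J[\ell])\subseteq k(J[\ell^\infty])$, check each layer is pro-$\ell$ (the middle one via the same unipotent block computation the paper uses), and check unramifiedness away from $\ell$ via N\'eron--Ogg--Shafarevich. This avoids the external reference and makes the use of each hypothesis completely transparent; the paper's version is terser and packages the ``upper triangular with cyclotomic diagonal'' criterion for reuse later in the proof of Theorem~\ref{thm:main_theorem}. One small remark: your appeal to the Weil pairing for $K\subseteq k(J[\ell^\infty])$ presumes $\dim J\geq 1$; when $J=0$ all three conditions are vacuously $k\subseteq\ten$, so this is harmless but worth a word.
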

\begin{proof}
The equivalence between (i) and (ii) follows from the reduction type of $J$, the results of Serre-Tate \cite{Serre-Tate:1968}, and the (group theoretic) fact that the kernel of the natural homomorphism $GL(T_\ell J) \to GL(J[\ell])$ is pro-$\ell$. For any abelian variety $B/k$, let $\rho_{B,\ell}$ denote the mod $\ell$ Galois representation attached to $B$. As established in \cite[Lemma 3.3]{Rasmussen-Tamagawa:2017}, the containment $k(B[\ell]) \subseteq \ten$ is equivalent to the condition that $\rho_{B,\ell}$ is upper triangular with powers of $\chi$, the $\ell$-cyclotomic character modulo $\ell$, appearing on the diagonal (or rather, may be placed in such form by appropriate choice of basis). The isomorphisms established in Proposition \ref{prop:l-torsion}
imply that $\rho_{J,\ell}$ has the form
\[ \rho_{J,\ell} \sim \begin{pmatrix}
   \rho_{A_1,\ell} & * & \cdots & * \\
         & \rho_{A_2,\ell} & \cdots & * \\
 & & \ddots & \vdots \\
 & & & \rho_{A_n,\ell} \end{pmatrix}.
\]
Thus, (iii) is equivalent to each $\rho_{A_s,\ell}$ being upper triangular with powers of $\chi$ on the diagonal. This in turn is equivalent to $\rho_{J,\ell}$ being upper triangular with powers of $\chi$ on the diagonal, which is in turn equivalent to (ii).
\end{proof}
In the remainder of this section, we compute the dimension of $A_s$ in terms of $\ell$ and $S[\less s]$. Let $\Gamma_s$ denote the Galois group of $\Pi_s$ after passing to the algebraic closure; that is,
\[ \Gamma_s := \Gal( \bar{C}_s/\bar{C}_0) \cong \Z/\ell^s \Z \]
for each $0 \leq s \leq n$. For any $0 \leq j < n$ and any $\xi \in S[j]$, we let $I_s(\xi)$ denote the inertia subgroup of $\Gamma_s$ attached to $\xi$ (viewed as a place of $\bar{C}_0$).
\begin{lemma}
For each $0 \leq j < n$ and each $\xi \in S[j]$,
\[ \# I_s(\xi) = \begin{cases} \ell^{s-j} & j < s \\ 1 & j \geq s \end{cases}.   \]
\end{lemma}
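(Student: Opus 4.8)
The plan is to compute the ramification index $e = e(w \mid v)$, where $v = v_\xi$ is the valuation of $\bar{k}(x) = \bar{k}(\bar{C}_0)$ at $\xi$ and $w$ is any place of $\bar{C}_s$ above it. Since $\Gamma_s$ is abelian, $I_s(\xi)$ is the common inertia group of all places over $\xi$, so this is well defined; and as the residue fields are all equal to $\bar{k}$, the residue degree is $1$, whence $\#I_s(\xi) = \#D_w = e(w\mid v)\cdot f(w\mid v) = e(w\mid v)$. So it suffices to show $e(w\mid v) = \ell^{s-j}$ when $j<s$ and $e(w\mid v)=1$ when $j \ge s$.

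First I would record that $\bar{C}_s \to \bar{C}_0$ really is cyclic of degree exactly $\ell^s$. The extension $\bar{k}(x)(y_s)/\bar{k}(x)$ is generated by an $\ell^s$th root of $f$, and by Lemma~\ref{lemma:S0-nonempty} the set $S[0]$ is non-empty, so $\ell \nmid n_\xi$ for some $\xi \in S$ and $f$ is not an $\ell$th power in $\bar{k}(x)$; since $\ell$ is prime, the only divisor $d$ of $\ell^s$ for which $f$ is a $d$th power in $\bar{k}(x)$ is $d = 1$, hence $[\bar{k}(x)(y_s):\bar{k}(x)] = \ell^s$ with Galois group $\Gamma_s$ (this also reconfirms $\bar{C}_s$ is irreducible).

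The main step is the local computation at $\xi$, which is the standard ramification formula for Kummer extensions. Passing to the completion $\widehat{\bar{k}(x)}_v \cong \bar{k}((t))$ with uniformizer $t = x - \xi$, write $f = t^{n_\xi}u$ with $u \in \bar{k}((t))^\times$. Because $\bar{k}$ is algebraically closed and $\ell$ is invertible, Hensel's lemma produces $\tilde{u} \in \bar{k}((t))^\times$ with $\tilde{u}^{\ell^s} = u$, so on replacing $y_s$ by $\eta := y_s/\tilde{u}$ the completed extension becomes $\bar{k}((t))(\eta)$ with $\eta^{\ell^s} = t^{n_\xi}$. Put $d := \gcd(\ell^s, n_\xi)$ and $e := \ell^s/d$; a short Bézout-coefficient manipulation produces an element $\pi = t^{\alpha}\eta^{\beta}$ with $\pi^e = t$, which shows $\bar{k}((t))(\eta) \subseteq \bar{k}((\pi))$, hence $e(w\mid v) \le e$, while $\ell^s\, w(\eta) = n_\xi\, e(w \mid v)$ together with $\gcd(e, n_\xi/d)=1$ forces $e \mid e(w\mid v)$; thus $e(w\mid v) = e$. (Alternatively one may simply quote $e(w\mid v) = \ell^s/\gcd(\ell^s, n_\xi)$ from any standard treatment of Kummer theory.) Finally, writing $n_\xi = \ell^j m_0$ with $\ell \nmid m_0$ gives $d = \ell^{\min(j,s)}$, so $e = \ell^{s - \min(j,s)}$, which equals $\ell^{s-j}$ when $j < s$ and $1$ when $j \ge s$.

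There is no genuine obstacle: the whole statement is local ramification theory for Kummer covers. The only points needing a little care are invoking Lemma~\ref{lemma:S0-nonempty} so that $\Gamma_s$ has order exactly $\ell^s$, and observing that $\#I_s(\xi) = e(w\mid v)$ because the residue extensions are trivial; both are immediate in characteristic zero.
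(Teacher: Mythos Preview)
Your proof is correct and follows the same approach as the paper: both identify $\#I_s(\xi)$ with the ramification index of the Kummer cover $\bar{C}_s \to \bar{C}_0$ at $\xi$ and then evaluate it as $\ell^s/\gcd(\ell^s,n_\xi) = \ell^{s-\min(s,j)}$. The only difference is level of detail---the paper simply asserts this standard ramification formula for $y_s^{\ell^s}=f(x)$, whereas you carry out the local computation explicitly via completion and Hensel's lemma.
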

\begin{proof}
The order of $I_s(\xi)$ coincides with the ramification index in the subcovering $\bar{C}_s \to \bar{C}_0$. Recall that a model for $C_s$ is given by
\[ y_s^{\ell^s} = f(x) = \lambda \prod_{\xi' \in S} (x - \xi')^{n_{\xi'}}. \]
As $\xi \in S[j]$, $\ord_\ell n_\xi = j$. Thus, the point $\xi$ is unramified if and only if $s \leq j$. Otherwise, the ramification index must be $\ell^{s-j}$.
\end{proof}
\begin{lemma}\label{lemma:hs}
For any $1 \leq s \leq n$, the dimension of $A_s$ is given by
\[ h_s = \frac{1}{2} \ell^{s-1} (\ell - 1) \left( -2 + \sum_{j=0}^{s-1} r_j \right) = \frac{1}{2} \ell^{s-1}(\ell - 1) \bigl( \#S[\less s] - 2 \bigr). \]
\end{lemma}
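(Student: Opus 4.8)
The plan is to reduce the statement to a genus computation and then apply the Riemann--Hurwitz formula to the tower \eqref{eqn:tower}. Since $J_{s-1}$ sits inside $J_s$ as an abelian subvariety with quotient $A_s$, we already have $h_s = \dim A_s = g_s - g_{s-1}$, where $g_s = \dim J_s$ is the genus of $C_s$. As genus is a geometric invariant it suffices to compute it for $\bar{C}_s = C_s \times_k \bar{k}$. First I would record that each $\bar{C}_s$ is connected: since $S[0] \neq \varnothing$ by Lemma~\ref{lemma:S0-nonempty}, $f$ is not an $\ell$th power in $\bar{k}(x)$, so the equation $y_s^{\ell^s} = f(x)$ defines a geometrically irreducible curve. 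Consequently $\Pi_s \colon \bar{C}_s \to \P^1_{\bar{k}}$ is a connected Galois covering with group $\Gamma_s \cong \Z/\ell^s\Z$, which in characteristic zero is automatically tamely ramified.

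Next I would apply Riemann--Hurwitz to $\Pi_s$. As noted in Section~\ref{sec:S}, the branch locus of $\Pi_s$ is exactly $S[\less s]$ (and $\infty$ is unramified because $n_\infty \equiv 0 \pmod{\ell^n}$). For $\xi \in S[j]$ with $j < s$, the lemma preceding the statement gives $\# I_s(\xi) = \ell^{s-j}$; since $\Gamma_s$ acts transitively on the fibre over $\xi$, there are $\ell^s/\ell^{s-j} = \ell^j$ points of $\bar{C}_s$ above $\xi$, each with ramification index $\ell^{s-j}$. Hence the contribution of $\xi$ to the ramification term of Riemann--Hurwitz is $\ell^j(\ell^{s-j} - 1) = \ell^s - \ell^j$, and since $C_0 = \P^1$ has genus $0$ we obtain
\[ 2 g_s - 2 = -2\ell^s + \sum_{j=0}^{s-1} r_j\,(\ell^s - \ell^j). \]
(With the usual empty-sum convention this identity also holds for $s = 0$, recovering $g_0 = 0$.)

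Finally, writing this identity for both $s$ and $s-1$ and subtracting, I expect the sums $\sum_j r_j\ell^j$ to cancel except for the single term $r_{s-1}\ell^{s-1}$, which in turn is absorbed by the term $r_{s-1}\ell^{s-1}$ coming from the difference $\#S[\less s] - \#S[\less(s-1)] = r_{s-1}$. The upshot should be $2 h_s = \ell^{s-1}(\ell-1)\bigl(\#S[\less s] - 2\bigr)$, and rewriting $\#S[\less s] = \sum_{j=0}^{s-1} r_j$ yields both forms in the statement. There is no serious obstacle here beyond the arithmetic bookkeeping; the one place where an earlier result is genuinely needed is the count $\ell^j$ of points in each fibre, which rests on $\bar{C}_s$ being connected and hence, ultimately, on $S[0] \neq \varnothing$.
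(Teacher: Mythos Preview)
Your proposal is correct and follows essentially the same approach as the paper: apply Riemann--Hurwitz to $\Pi_s$ using the fiber counts $\ell^j$ and ramification indices $\ell^{s-j}$ over $S[j]$ to obtain $2g_s - 2 = -2\ell^s + \sum_{j=0}^{s-1} r_j\ell^j(\ell^{s-j}-1)$, and then take the difference $h_s = g_s - g_{s-1}$. The paper's version is terser (it omits the connectedness discussion and the explicit subtraction), but the argument is the same.
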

\begin{proof}
The covering $\Pi_s$ branches only over $S[\less s]$, and the ramification index is $\ell^{s-j}$ for all points in the fibers over $S[j]$ when $j < s$. Moreover, each such fiber contains precisely $\ell^j = \frac{\ell^s}{\ell^{s-j}}$ points. Applying the Riemann-Hurwitz formula gives:
\[ 2g_s - 2 = -2 \ell^s + \sum_{j=0}^{s-1} r_j \ell^j (\ell^{s-j} - 1). \]
The final result follows directly from the fact that $h_s = g_s - g_{s-1}$.
\end{proof}

\section{The subspace $W_s$}\label{section:Ws}
In this section, we demonstrate an explicit $\F_\ell$-subspace $W_s \leq A_s[\ell]$ for each $s$, which we later use to more explicitly study the Galois representation $\rho_{A_s,\ell}$.

To begin, we establish a structural property of certain groups of divisors on a curve. Suppose $X/\bar{k}$ is a smooth projective curve. Recall that a divisor $D$ on $X$ is \emph{reduced} if $D \neq 0$ and $D$ is a finite sum of distinct prime divisors; i.e., $D = x_1 + x_2 + \cdots + x_n$ for $n$ distinct points $x_i \in X$. For such $D$, we have an equality of subgroups within $\Div X$,
\[ \Z x_1 + \cdots + \Z x_n = \Z D + \Z x_2 + \cdots + \Z x_n. \]
Note that both sides of the above equality represent internal direct sums within $\Div X$.
\begin{lemma}\label{lemma:torfree}
Let $D_1, \dots, D_n$ be reduced divisors on $X$ which, taken pairwise, have disjoint support. Let $\mathcal{W}$ be the subgroup generated by $D_1, \dots D_n$, and set $\mathcal{W}^0 := \mathcal{W} \cap \Div^0 X$.
\begin{enumerate}[label=(\alph*),itemsep=0pt]
\item $\mathcal{W}$ is a direct summand of $\Div X$.
\item The groups $\Div X / \mathcal{W}$, $\Div X / \mathcal{W}^0$, and $\Div^0 X / \mathcal{W}^0$ are torsion-free.
\end{enumerate}
\end{lemma}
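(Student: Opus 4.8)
The plan is to reduce everything to a concrete statement about finitely generated free abelian groups. Since the $D_i$ are reduced divisors with pairwise disjoint support, write $D_i = x_{i,1} + \cdots + x_{i,m_i}$ for distinct points, and observe that the full collection $\{x_{i,j}\}$ consists of distinct points of $X$. Using the elementary equality recalled just before the lemma (applied within each $D_i$ separately), we have an internal direct sum decomposition
\[
\sum_{i,j} \Z x_{i,j} \;=\; \Big( \bigoplus_{i=1}^n \Z D_i \Big) \oplus \Big( \bigoplus_{i=1}^n \bigoplus_{j=2}^{m_i} \Z x_{i,j} \Big),
\]
and the left-hand side is a direct summand of $\Div X$ (it is the span of part of a $\Z$-basis of the free abelian group $\Div X$). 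Hence $\mathcal{W} = \bigoplus_i \Z D_i$ is itself a direct summand of $\Div X$: choosing a complement of $\sum_{i,j}\Z x_{i,j}$ in $\Div X$ and adjoining the $x_{i,j}$ with $j \geq 2$ produces a complement of $\mathcal{W}$. This proves (a), and since a direct summand of a free (hence torsion-free) abelian group has torsion-free quotient, $\Div X / \mathcal{W}$ is torsion-free.

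For (b), the remaining two quotients require a little more care because $\mathcal{W}^0 = \mathcal{W} \cap \Div^0 X$ is cut out by the degree condition. First, $\Div^0 X / \mathcal{W}^0$ is torsion-free: it injects into $\Div X / \mathcal{W}$ via the natural map (a class in $\Div^0 X$ lying in $\mathcal{W}$ automatically lies in $\mathcal{W}\cap\Div^0 X=\mathcal{W}^0$, so the map is injective), and a subgroup of a torsion-free group is torsion-free. Next, for $\Div X / \mathcal{W}^0$: the degree map $\deg \colon \Div X \to \Z$ is surjective with kernel $\Div^0 X$, and it restricts to $\mathcal{W}^0 = \ker(\deg|_{\mathcal{W}})$, so it induces a short exact sequence
\[
0 \longrightarrow \Div^0 X / \mathcal{W}^0 \longrightarrow \Div X / \mathcal{W}^0 \longrightarrow \deg(\Div X)/\deg(\mathcal{W}) \longrightarrow 0.
\]
The quotient on the right is $\Z / (d\Z)$ where $d = \gcd_i \deg D_i$ (since $\deg \mathcal{W} = \sum_i \Z \deg D_i = d\Z$), which is torsion-free precisely when $d \in \{0,1\}$ — so this is \emph{not} automatically torsion-free, and a direct attack via this sequence fails. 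Instead, I will argue directly: suppose $E \in \Div X$ and $mE \in \mathcal{W}^0$ for some $m \geq 1$; then $mE \in \mathcal{W}$, and since $\Div X/\mathcal{W}$ is torsion-free by part (a), $E \in \mathcal{W}$; but also $\deg(mE) = 0$ forces $\deg E = 0$, so $E \in \mathcal{W} \cap \Div^0 X = \mathcal{W}^0$. Hence $\Div X / \mathcal{W}^0$ is torsion-free as well.

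The only genuinely delicate point is the one just flagged: one must resist the temptation to prove torsion-freeness of $\Div X/\mathcal{W}^0$ by splicing $\Div^0X/\mathcal{W}^0$ with $\Div X/\Div^0 X$, since the relevant cokernel $\Z/d\Z$ can have torsion; the clean route is the direct saturation argument using part (a). Everything else is bookkeeping with bases of free abelian groups, and the hypothesis that the $D_i$ have pairwise disjoint support is exactly what guarantees that passing from the points $x_{i,j}$ to the divisors $D_i$ (one per block) keeps us inside the "span of part of a basis" situation.
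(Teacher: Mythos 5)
Your proof is correct, and for part (a) and the first two quotients in (b) it is essentially identical to the paper's argument: decompose the span of the points $x_{i,j}$ block by block using the displayed identity preceding the lemma, conclude $\mathcal{W}$ is spanned by part of a $\Z$-basis of $\Div X$, and deduce torsion-freeness of $\Div X/\mathcal{W}$ and of $\Div^0 X/\mathcal{W}^0 \hookrightarrow \Div X/\mathcal{W}$. The one place you diverge is $\Div X/\mathcal{W}^0$, where your direct saturation argument ($mE \in \mathcal{W}^0$ forces $E \in \mathcal{W}$ by (a) and $\deg E = 0$ by degree considerations, hence $E \in \mathcal{W}^0$) is perfectly valid. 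However, your cautionary remark that the ``splicing'' approach fails is aimed at the wrong short exact sequence. The paper splices using the third isomorphism theorem for the chain $\mathcal{W}^0 \subseteq \Div^0 X \subseteq \Div X$, namely
\[
0 \longrightarrow \Div^0 X/\mathcal{W}^0 \longrightarrow \Div X/\mathcal{W}^0 \longrightarrow \Div X/\Div^0 X \longrightarrow 0,
\]
whose cokernel is $\Div X/\Div^0 X \cong \Z$ via the degree map --- torsion-free, with no $\Z/d\Z$ issue --- and an extension of a torsion-free group by a torsion-free group is torsion-free. (Your sequence with cokernel $\deg(\Div X)/\deg(\mathcal{W})$ is also not quite exact as written: the kernel of that degree-induced map is $(\Div^0 X + \mathcal{W})/\mathcal{W}^0$, not $\Div^0 X/\mathcal{W}^0$. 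But since you discard that route anyway, this does not affect your proof.)
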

\begin{proof}
We first show (a) implies (b). Assuming (a) holds, $\Div X / \mathcal{W}$ is isomorphic to a direct summand of the free $\Z$-module $\Div X$, and is therefore torsion-free. The natural map $\Div^0 X \to \Div X / \mathcal{W}$ has $\mathcal{W}^0$ as its kernel, and so induces an inclusion $\Div^0 X / \mathcal{W}^0 \hookrightarrow \Div X / \mathcal{W}$. Thus, $\Div^0 X / \mathcal{W}^0$ is torsion free. Now, there is a short exact sequence
\[
\begin{tikzcd}
0 \ar{r} & {\displaystyle \frac{\Div^0 X}{\mathcal{W}^0}} \ar{r} & {\displaystyle \frac{\Div X}{\mathcal{W}^0}} \arrow{r} & {\displaystyle \frac{\Div X}{\Div^0 X}} \ar{r} & 0
\end{tikzcd}, \]
whose first and third terms are torsion-free (the third being isomorphic to $\Z$). Thus $\Div X / \mathcal{W}^0$ is also torsion free.

For (a), let $Z \subseteq X$ be the support of $\sum D_i$, and let $m_i = \deg D_i$. Writing $D_i = \sum_{j=1}^{m_i} x_{i,j}$, we have $x_{i,j} = x_{i',j'}$ only when $i = i'$, $j = j'$, by assumption. Now we may express $\Div X$ via internal sums as
\[ \begin{split}
\Div X & = \sum_{x \in Z} \Z x + \sum_{x \not\in Z} \Z x = \sum_{i=1}^n \sum_{j=1}^{m_i} \Z x_{i,j} + \sum_{x \not\in Z} \Z x \\
 & = \sum_{i=1}^n \left( \Z D_i + \sum_{j=2}^{m_i} \Z x_{i,j} \right) + \sum_{x \not\in Z} \Z x \\
 & = \sum_{i=1}^n \Z D_i + \sum_{i=1}^n \sum_{j=2}^{m_i} \Z x_{i,j} + \sum_{x \not\in Z} \Z x \\
 & = \mathcal{W} + \sum_{i=1}^n \sum_{j=2}^{m_i} \Z x_{i,j} + \sum_{x \not\in Z} \Z x.
\end{split}
\]
The hypotheses guarantee that every internal sum in the above is direct, and so $\mathcal{W}$ is a direct summand of $\Div X$.
\end{proof}

For each $0 \leq s \leq n$, we define a mapping $[ \,\cdot\, ]_s \colon \P^1 \longrightarrow \Div C_s$ as follows. For $s=0$, set $[\xi]_0 := \xi$. (Recall, $C_0 = \P^1$.) For $s > 0$, we define
\[ [\xi]_s := \sum_{\eta \in \Pi_s^{-1}(\xi)} \eta. \]
That is, $[\xi]_s$ is the support divisor for the fiber of $\Pi_s$ over $\xi$, but with \emph{no} weighted coefficients coming from ramification. By definition, this is an effective reduced divisor satisfying
\[ \deg \, [\xi]_s = \begin{cases} \ell^s & \xi \not\in S[\less s] \\ \ell^j & \xi \in S[j], \qquad j < s \end{cases}. \]
Because $\Pi_s$ is $k$-rational, for any $\xi$ and any $\sigma \in G_k$, $[\sigma(\xi)]_s = \sigma \cdot [\xi]_s$. (That is, $[\,\cdot\,]$ is $G_k$-equivariant.)
\begin{lemma}\label{lemma:pullback_formula}
Let $1 \leq s \leq n$, and let $\xi \in \P^1$.
\begin{enumerate}[label=\emph{(\alph*)}]
\item If $\xi \not\in S[\less s]$, then $\pi_s^*([\xi]_{s-1}) = [\xi]_s$.
\item If $\xi \in S[\less s]$, then $\pi_s^*([\xi]_{s-1}) = \ell \cdot [\xi]_s$.
\item If $\xi \in S[j]$ for $0 \leq j \leq s$ and $j \neq n$, then $\Pi_s^* (\xi) = \ell^{s-j}[\xi]_s$.
\item If $\xi \not\in S[\less s]$, then $\Pi_s^*(\xi) = [\xi]_s$.
\end{enumerate}
\end{lemma}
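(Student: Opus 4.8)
The plan is to deduce all four formulas from the inertia computation in the preceding lemma (which records $\#I_s(\xi)$), together with the multiplicativity of ramification indices along the factorization $\Pi_s=\Pi_{s-1}\circ\pi_s$ and the definition of the divisorial pullback. Over $\bar k$ the morphism $\Pi_t\colon C_t\to\P^1$ is Galois, cyclic of degree $\ell^t$, so for a fixed $\xi$ all points of $\Pi_t^{-1}(\xi)$ share one ramification index, namely $\#I_t(\xi)$; by the preceding lemma this equals $\ell^{t-j}$ when $\xi\in S[j]$ with $j<t$ and equals $1$ otherwise --- in particular it is $1$ whenever $\xi\notin S[\less t]$, since the branch locus of $\Pi_t$ is exactly $S[\less t]$. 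Counting points then shows $\Pi_t^{-1}(\xi)$ has $\ell^j$ points in the first case and $\ell^t$ in the second, matching the stated degrees of $[\xi]_t$.

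Parts (c) and (d) are then immediate. If $\xi\notin S[\less s]$, then $\Pi_s$ is unramified over $\xi$, so by definition of the pullback $\Pi_s^*(\xi)=\sum_{\eta\in\Pi_s^{-1}(\xi)}\eta=[\xi]_s$; this is (d), and it also covers the case $j=s$ of (c), since there $\ell^{s-j}=1$. If instead $\xi\in S[j]$ with $j<s$, then every point of $\Pi_s^{-1}(\xi)$ occurs in $\Pi_s^*(\xi)$ with multiplicity $\ell^{s-j}$, whence $\Pi_s^*(\xi)=\ell^{s-j}[\xi]_s$, which is (c).

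For (a) and (b) I would pass through $\pi_s$ by multiplicativity. Fix $\eta\in C_s$, put $\eta':=\pi_s(\eta)\in C_{s-1}$ and $\xi:=\Pi_{s-1}(\eta')\in\P^1$, so that the ramification indices satisfy $e_{\Pi_s}(\eta)=e_{\pi_s}(\eta)\cdot e_{\Pi_{s-1}}(\eta')$. If $\xi\notin S[\less s]$, then also $\xi\notin S[\less{s-1}]$, so both $e_{\Pi_s}(\eta)$ and $e_{\Pi_{s-1}}(\eta')$ equal $1$, forcing $e_{\pi_s}(\eta)=1$; consequently $\pi_s^*$ carries each (unramified) point of the reduced fiber $[\xi]_{s-1}$ to the sum of its $\ell$ distinct preimages, and summing these over the points of $\Pi_{s-1}^{-1}(\xi)$ recovers exactly $[\xi]_s$, giving (a). If instead $\xi\in S[j]$ with $j\le s-1$, then $e_{\Pi_s}(\eta)=\ell^{s-j}$ and $e_{\Pi_{s-1}}(\eta')=\ell^{s-1-j}$, so $e_{\pi_s}(\eta)=\ell$; since $\pi_s$ has degree $\ell$, this means $\pi_s$ is totally ramified over each of the $\ell^j$ points $\eta'$ of $[\xi]_{s-1}$, so $\pi_s^*(\eta')=\ell\,\eta$ for the unique $\eta$ above it, and adding these up yields $\pi_s^*([\xi]_{s-1})=\ell\cdot[\xi]_s$, which is (b).

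I do not expect a genuine obstacle here: the content is entirely bookkeeping of ramification indices and fiber cardinalities, all supplied by the preceding lemma, plus the single observation that being unramified over $\xi$ (resp.\ precisely how ramified it is) is transmitted from $\Pi_{s-1}$ to $\pi_s$ through the product formula $e_{\Pi_s}=e_{\pi_s}\cdot e_{\Pi_{s-1}}$. The one point worth stating carefully is the case distinction $j=s$ versus $j<s$ in (c). Alternatively, once (a) and (b) are in hand one could re-derive (c) and (d) from $\Pi_s^*=\pi_s^*\circ\Pi_{s-1}^*$ by induction on $s$, cancelling the common power of $\ell$ using that $\Div C_s$ is torsion-free; but the direct argument above seems cleaner.
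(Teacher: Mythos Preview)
Your argument is correct. The only difference from the paper is the order of deduction: the paper first proves (a) and (b) directly from the observation that the degree-$\ell$ map $\pi_s$ is (after base change) Galois, so each point of $C_s$ is either unramified or totally ramified, with the case determined by whether $\xi\in S[\less s]$; it then obtains (c) for $j<s$ and (d) by composing $\Pi_s^*=\pi_s^*\circ\cdots\circ\pi_1^*$ and applying (a), (b) repeatedly, and finally reads off the case $j=s$ of (c) from (d). You instead go top-down: you pull (c) and (d) straight out of the inertia computation for $\Pi_s$, and then recover the single-step behavior of $\pi_s$ via the multiplicativity $e_{\Pi_s}=e_{\pi_s}\cdot e_{\Pi_{s-1}}$. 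Both routes rest on the same ramification bookkeeping, and you even sketch the paper's route as your ``alternative'' at the end; neither approach gains anything substantive over the other.
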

\begin{proof}
By definition, the degree $\ell$ covering $\pi_s \colon C_s \to C_{s-1}$ is, possibly after base change to a finite extension $k'/k$, Galois with Galois group $\Z/\ell \Z$. Thus, every point of $C_s(\bar{k})$ is either unramified or totally ramified under $\pi_s$. The stratification of $S$ introduced in \S \ref{sec:S} guarantees the following: for any $\xi \in \P^1$, $\pi_s$ is totally ramified over every point in the support of $[\xi]_{s-1}$ if $\xi \in S[\less s]$, which demonstrates (a). Likewise, if $\xi \not\in S[\less s]$, then $\pi_s$ is unramified over every point in the support of $[\xi]_{s-1}$, which demonstrates (b).

We first prove (c) in case $j < s$. This follows from (a) and (b), since for $\xi \in S[j]$ we have
\[ \begin{split}
\Pi_s^*(\xi) & = \Pi_s^*([\xi]_0) \\
 & = \left( \pi_s^* \circ \cdots \circ \pi_1^* \right)([\xi]_0) \\
 & = \left( \pi_s^* \circ \cdots \circ \pi_{j+1}^* \right)([\xi]_j) \\
 & = \ell^{s-j} [\xi]_s.
\end{split}  \]
Similarly, (d) follows from (a), since $\xi \not\in S[\less s]$ implies $\xi \not\in S[\less t]$ for all $t \leq s$. Thus,
\[ \Pi_s^*\left( [\xi]_0 \right) = (\pi_s^* \circ \cdots \circ \pi_1^*)( [\xi]_0 ) = [\xi]_s. \]
Finally, (c) in the case $j = s$ follows from (d), completing the proof.
\end{proof}
We now fix a point $\xi_0 \in S[0]$ (this set is nonempty by Lemma \ref{lemma:S0-nonempty}).
\begin{corollary}\label{cor:pullback}
If $\xi \in \P^1(\bar{k})$ and $\xi \not\in S[\less s]$, then
\[ \Pi_s^* \bigl( [\xi]_0 - [\xi_0]_0 \bigr) = [\xi]_s - \ell^s [\xi_0]_s. \]
\end{corollary}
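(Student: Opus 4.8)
The plan is to exploit the linearity of the divisor pullback $\Pi_s^*$ together with the explicit formulas of Lemma \ref{lemma:pullback_formula}; the corollary should fall out immediately, with essentially no calculation beyond bookkeeping. Since $\Pi_s^*$ is a homomorphism of abelian groups, I would write
\[ \Pi_s^*\bigl([\xi]_0 - [\xi_0]_0\bigr) = \Pi_s^*\bigl([\xi]_0\bigr) - \Pi_s^*\bigl([\xi_0]_0\bigr), \]
and evaluate each summand in turn.

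For the first summand, the hypothesis $\xi \not\in S[\less s]$ is precisely the hypothesis of Lemma \ref{lemma:pullback_formula}(d), which gives $\Pi_s^*([\xi]_0) = [\xi]_s$. For the second summand, recall that $\xi_0 \in S[0]$ by the standing choice made just before the statement (this set being nonempty by Lemma \ref{lemma:S0-nonempty}). Thus $\xi_0 \in S[j]$ with $j = 0$, which satisfies $0 \leq j \leq s$ and, since $n \geq 1$, also $j \neq n$; hence Lemma \ref{lemma:pullback_formula}(c) applies and yields $\Pi_s^*([\xi_0]_0) = \ell^{s-0}[\xi_0]_s = \ell^s[\xi_0]_s$. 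Subtracting the two evaluations gives $[\xi]_s - \ell^s[\xi_0]_s$, as claimed.

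There is no genuine obstacle here: the only point requiring attention is confirming that the index conditions attached to the cases of Lemma \ref{lemma:pullback_formula} are met by the two points $\xi$ and $\xi_0$, and in particular that the excluded case $j = n$ in part (c) does not arise for $\xi_0$ --- which is guaranteed by $n \geq 1$. No Riemann--Hurwitz or cohomological input is required; the statement is a purely formal consequence of the pullback formulas already established, so a two- or three-line proof should suffice.
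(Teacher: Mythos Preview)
Your proof is correct and follows exactly the paper's approach: the paper simply states that the corollary follows immediately from Lemma \ref{lemma:pullback_formula} (c) and (d), and your argument is a careful unpacking of precisely this. Your verification that the hypotheses of parts (c) and (d) are met (in particular that $j = 0 \neq n$ for $\xi_0$) is a welcome bit of diligence beyond what the paper records.
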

\begin{proof}
This follows immediately from Lemma \ref{lemma:pullback_formula} (c) and (d).
\end{proof}
We define the following subgroups of $\Div C_s$:
\[ \begin{split}
\mathcal{W}_s & := \bigl\langle [\xi]_s : \xi \in S[\less s] \bigr\rangle, \\
\mathcal{W}_s^0 & := \mathcal{W}_s \cap \Div^0 C_s, \\
\mathcal{V}_s & := \bigl\langle [\xi]_s : \xi \in S[j] \text{ where } 0 \leq j \leq s, j \neq n \bigr\rangle, \\
\mathcal{V}_s^0 & := \mathcal{V}_s \cap \Div^0 C_s.
\end{split} \]
For any $j$, $s$ satisfying $0 \leq j \leq s \leq n$, $j \neq n$, and $\xi \in S[j]$, set
\[ D_{s, \xi} := [\xi]_s - \ell^j [\xi_0]_s \in \Div^0 C_s. \]
Notice that $D_{s,\xi_0} = 0$.
\begin{lemma}\label{lemma:Vs0_free}
For any $1 \leq s \leq n$, $\mathcal{V}_s^0$ is free with $\Z$-basis
\[ \left\{ D_{s,\xi} : 0 \leq j \leq s, j \neq n, \xi \in S[j], \xi \neq \xi_0 \right\}. \]
Moreover, $\Div^0 C_s / \mathcal{V}_s^0$ is torsion-free.
\end{lemma}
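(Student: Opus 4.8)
The plan is to deduce everything from Lemma \ref{lemma:torfree} together with an elementary unimodular change of basis inside the free abelian group $\Div C_s$.

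First I would record that the divisors $[\xi]_s$, as $\xi$ runs over $\bigcup_{0 \le j \le s,\, j \ne n} S[j]$, form a family of reduced divisors with pairwise disjoint support. Indeed, each $[\xi]_s$ is by definition the unweighted fiber $\Pi_s^{-1}(\xi) = \sum_{\eta \in \Pi_s^{-1}(\xi)} \eta$, which is a nonzero sum of distinct points, and fibers of $\Pi_s$ over distinct points of $\P^1$ have disjoint support. Consequently $\mathcal{V}_s = \bigoplus_\xi \Z [\xi]_s$ is free with basis $\{[\xi]_s\}$, and Lemma \ref{lemma:torfree} (applied to the collection $D_1, \dots, D_n$ given by these $[\xi]_s$) yields at once that $\mathcal{V}_s$ is a direct summand of $\Div C_s$ and that $\Div C_s/\mathcal{V}_s$, $\Div C_s/\mathcal{V}_s^0$, and $\Div^0 C_s/\mathcal{V}_s^0$ are torsion-free. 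This disposes of the ``moreover'' clause.

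For the basis assertion, I would invoke the degree homomorphism $\deg \colon \mathcal{V}_s \to \Z$. Since $\xi_0 \in S[0]$ (and $0 \ne n$ because $n \ge 1$, so $[\xi_0]_s$ genuinely occurs among the generators of $\mathcal{V}_s$), we have $\deg [\xi_0]_s = \ell^0 = 1$, so $\deg$ is surjective, and its kernel is precisely $\mathcal{V}_s^0$. On the basis $\{[\xi]_s\}$ I then perform the change of basis that replaces $[\xi]_s$ by $D_{s,\xi} = [\xi]_s - \ell^{j}[\xi_0]_s$ (where $\xi \in S[j]$) for every $\xi \ne \xi_0$, while retaining $[\xi_0]_s$. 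The transition matrix is the identity modified only in the $\xi_0$-row, with zero diagonal perturbation, hence unitriangular and unimodular, so $\{[\xi_0]_s\} \cup \{D_{s,\xi} : \xi \ne \xi_0\}$ is again a $\Z$-basis of $\mathcal{V}_s$. Because $\deg D_{s,\xi} = \ell^{j} - \ell^{j} \cdot 1 = 0$ while $\deg [\xi_0]_s = 1$, the kernel $\mathcal{V}_s^0$ of $\deg$ is freely generated by $\{D_{s,\xi} : 0 \le j \le s,\ j \ne n,\ \xi \in S[j],\ \xi \ne \xi_0\}$, as claimed. (Equivalently, one may split the exact sequence $0 \to \mathcal{V}_s^0 \to \mathcal{V}_s \xrightarrow{\deg} \Z \to 0$ and identify a basis of the kernel.)

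I do not expect a genuine obstacle; the argument is formal once the set-up is in place. The only points demanding care are bookkeeping: confirming the supports of the $[\xi]_s$ are pairwise disjoint so that $\mathcal{V}_s$ really is free on them and Lemma \ref{lemma:torfree} applies, checking that $\xi_0$ lies among the indices defining $\mathcal{V}_s$, and verifying that the displacement of $[\xi]_s$ to $D_{s,\xi}$ is realized by a unimodular matrix. If anything is subtle, it is keeping straight the distinction between $\mathcal{V}_s$ and $\mathcal{W}_s$ (the roles of the strata $S[\less s]$ versus $\{S[j] : j \le s,\ j \ne n\}$), but that does not affect the proof above.
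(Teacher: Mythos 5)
Your proof is correct and follows essentially the same route as the paper: both arguments rest on the pairwise disjointness of the supports of the fibers $[\xi]_s$ (so that they freely generate $\mathcal{V}_s$), use the degree-zero condition to eliminate the $[\xi_0]_s$-coefficient and identify the claimed basis of $\mathcal{V}_s^0$, and invoke Lemma \ref{lemma:torfree} for the torsion-freeness of $\Div^0 C_s/\mathcal{V}_s^0$. Your packaging of the independence check as a unimodular change of basis plus the kernel of $\deg$ is only a cosmetic variant of the paper's direct spanning-and-independence verification.
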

\begin{proof}
Suppose $D \in \mathcal{V}_s^0$. A priori, $D$ has the form $\sum_\xi b_\xi [\xi]_s$, and so also
\[ D = \sum_{\xi \,\neq\, \xi_0} b_\xi D_{s,\xi} + c[\xi_0]_s, \qquad \text{ for some } c \in \Z. \]
But $D$, $D_{s,\xi}$ are degree $0$, and so $c = 0$. Thus, the given set spans $\mathcal{V}_s^0$.

Suppose $a_\xi \in \Z$ are chosen such that $\sum_{\xi \neq \xi_0} a_\xi D_{s,\xi} = 0$. Let $\eta_0 \in C_s(\bar{k})$ be the unique point in $\Pi^{-1}_s(\xi_0)$. By definition of $D_{s,\xi}$, there exists $b \in \Z$ such that
\[ \sum_{\xi \,\neq\, \xi_0^{\vphantom{-1}}} \; \sum_{\eta \,\in\, \Pi^{-1}_s(\xi)}  a_\xi \cdot \eta \;\; + b \cdot \eta_0 = 0. \]
But each point of $C_s$ appears at most once in the left hand expression, and it may only vanish if $a_\xi = 0$ for all $\xi$. This demonstrates the given set is a basis for $\mathcal{V}_s^0$. The final claim follows from Lemma \ref{lemma:torfree}.
\end{proof}
By essentially the same argument we have:
\begin{lemma}\label{lemma:W0_basis}
For any $1 \leq s \leq n$, $\mathcal{W}_s^0$ is free with $\Z$-basis
\[ \bigl\{ D_{s,\xi} : \xi \in S[\less s] - \{\xi_0\} \bigr\}, \]
and $\Div^0 C_s / \mathcal{W}_s^0$ is torsion-free.
\end{lemma}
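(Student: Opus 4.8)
The plan is to mimic the proof of Lemma \ref{lemma:Vs0_free} almost verbatim, since $\mathcal{W}_s$ is generated by the subfamily $\{[\xi]_s : \xi \in S[\less s]\}$ of the generators of $\mathcal{V}_s$. First I would record the two structural facts that drive the argument: the divisors $[\xi]_s$ for $\xi \in S[\less s]$ are effective reduced divisors (as noted when $[\,\cdot\,]_s$ was introduced), and for distinct $\xi, \xi' \in \P^1$ the fibers $\Pi_s^{-1}(\xi)$ and $\Pi_s^{-1}(\xi')$ are disjoint, so these divisors have pairwise disjoint support. Also, since $s \geq 1$, the chosen point $\xi_0 \in S[0]$ lies in $S[\less s]$, so each $D_{s,\xi} = [\xi]_s - \ell^{j}[\xi_0]_s$ (for $\xi \in S[j]$ with $j < s$) is a well-defined element of $\mathcal{W}_s^0$.

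Next, for the spanning claim: an arbitrary $D \in \mathcal{W}_s^0$ has the form $\sum_{\xi \in S[\less s]} b_\xi [\xi]_s$, and rewriting each $[\xi]_s$ as $D_{s,\xi} + \ell^{j}[\xi_0]_s$ gives $D = \sum_{\xi \neq \xi_0} b_\xi D_{s,\xi} + c\,[\xi_0]_s$ for some $c \in \Z$; since $D$ and every $D_{s,\xi}$ have degree $0$, we conclude $c = 0$, so the proposed set spans $\mathcal{W}_s^0$. For linear independence, suppose $\sum_{\xi \in S[\less s] - \{\xi_0\}} a_\xi D_{s,\xi} = 0$. Expanding each $D_{s,\xi}$ into the sum of the points of $\Pi_s^{-1}(\xi)$ (each with coefficient $a_\xi$) minus a multiple of the single point $\eta_0 \in \Pi_s^{-1}(\xi_0)$, and using that distinct $\xi$ give disjoint fibers — so each point of $C_s$ occurs in at most one of these expansions — the coefficient of any $\eta \in \Pi_s^{-1}(\xi)$ must equal $a_\xi$, forcing $a_\xi = 0$ for all $\xi$. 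Hence $\{D_{s,\xi} : \xi \in S[\less s] - \{\xi_0\}\}$ is a $\Z$-basis of $\mathcal{W}_s^0$.

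Finally, the torsion-freeness of $\Div^0 C_s / \mathcal{W}_s^0$ is immediate from Lemma \ref{lemma:torfree}: take the reduced, pairwise-support-disjoint divisors $D_1, \dots, D_r$ there to be $\{[\xi]_s : \xi \in S[\less s]\}$, so that $\mathcal{W} = \mathcal{W}_s$ and $\mathcal{W}^0 = \mathcal{W}_s^0$; part (b) of that lemma then yields the claim. I do not anticipate a genuine obstacle: the only points requiring care are verifying the hypotheses of Lemma \ref{lemma:torfree} for the family $\{[\xi]_s\}_{\xi \in S[\less s]}$ (reduced divisors with pairwise disjoint support) and confirming that $\xi_0 \in S[\less s]$, both of which are routine.
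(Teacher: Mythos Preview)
Your proposal is correct and follows exactly the approach the paper intends: the paper omits the proof entirely, stating only that the lemma follows ``by essentially the same argument'' as Lemma~\ref{lemma:Vs0_free}, and your write-up faithfully carries out that argument (spanning via degree considerations, independence via disjointness of the fibers, and torsion-freeness via Lemma~\ref{lemma:torfree}).
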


\begin{lemma}\label{lemma:Ds_xi}
Suppose $1 \leq s \leq n$, $0 \leq j \leq s$, and $j \neq n$. Suppose $\xi \in S[j]$.
\begin{enumerate}[itemsep=1pt, label=(\alph*)]
\item If $j < s$, then $\ell D_{s,\xi} = \pi_s^* D_{s-1,\xi}$.
\item $\ell^{s-j} D_{s,\xi}$ is principal.
\end{enumerate}
\end{lemma}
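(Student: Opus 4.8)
The plan is to derive both parts from the pullback formulas of Lemma~\ref{lemma:pullback_formula}, together with the triviality of $\mathrm{Pic}^0(\P^1)$. For part~(a): since $j < s$ we have $\xi \in S[j] \subseteq S[\less s]$, and since $s \geq 1$ we also have $\xi_0 \in S[0] \subseteq S[\less s]$; moreover $D_{s-1,\xi}$ is legitimately defined, as $0 \leq j \leq s-1$ and $j \neq n$. By Lemma~\ref{lemma:pullback_formula}(b), $\pi_s^*[\xi]_{s-1} = \ell\,[\xi]_s$ and $\pi_s^*[\xi_0]_{s-1} = \ell\,[\xi_0]_s$, so applying the divisor-group homomorphism $\pi_s^*$ to $D_{s-1,\xi} = [\xi]_{s-1} - \ell^j[\xi_0]_{s-1}$ yields
\[ \pi_s^* D_{s-1,\xi} = \ell\,[\xi]_s - \ell^{j+1}[\xi_0]_s = \ell\bigl([\xi]_s - \ell^j[\xi_0]_s\bigr) = \ell D_{s,\xi}. \]
(The degenerate case $\xi = \xi_0$, possible only when $j = 0$, is trivial since both sides vanish.)

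For part~(b), I would exhibit $\ell^{s-j}D_{s,\xi}$ as the $\Pi_s$-pullback of a principal divisor on $\P^1$. Since $\xi \in S[j]$ with $0 \leq j \leq s$ and $j \neq n$, Lemma~\ref{lemma:pullback_formula}(c) gives $\Pi_s^*(\xi) = \ell^{s-j}[\xi]_s$; the same formula applied to $\xi_0 \in S[0]$ gives $\Pi_s^*(\xi_0) = \ell^s[\xi_0]_s$. Hence
\[ \Pi_s^*(\xi - \xi_0) = \ell^{s-j}[\xi]_s - \ell^s[\xi_0]_s = \ell^{s-j}\bigl([\xi]_s - \ell^j[\xi_0]_s\bigr) = \ell^{s-j}D_{s,\xi}. \]
Now $\infty \notin S$ by Lemma~\ref{lemma:niceform}, so $\xi, \xi_0 \in \mathbb{A}^1$ and $\xi - \xi_0 = \divisor_{\P^1}\bigl((x-\xi)/(x-\xi_0)\bigr)$ is principal (equivalently, every degree-zero divisor on $\P^1$ is principal). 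The pullback of a principal divisor along the finite morphism $\Pi_s$ is again principal --- explicitly, $\Pi_s^*(\xi - \xi_0) = \divisor_{C_s}\bigl((x-\xi)/(x-\xi_0)\bigr)$ --- and therefore $\ell^{s-j}D_{s,\xi}$ is principal.

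Both parts amount to bookkeeping with formulas already in hand, so I do not anticipate a genuine obstacle; the only points demanding care are tracking which stratum $S[\,\cdot\,]$ contains $\xi$ and $\xi_0$ (so as to invoke the correct case of Lemma~\ref{lemma:pullback_formula}) and verifying that the divisors $D_{s-1,\xi}$ and $D_{s,\xi}$ appearing above are indeed defined. As an alternative, part~(b) could be obtained from part~(a) by descending induction on $s-j$, with base case $s = j$ given by $D_{j,\xi} = \Pi_j^*(\xi - \xi_0)$, which is principal; but the direct argument above handles $j = s$ without a separate case.
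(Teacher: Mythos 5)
Your proof is correct and follows essentially the same route as the paper: part (a) by applying $\pi_s^*$ to $D_{s-1,\xi}$ via Lemma \ref{lemma:pullback_formula}(b), and part (b) by exhibiting $\ell^{s-j}D_{s,\xi}$ as $\Pi_s^*([\xi]_0 - [\xi_0]_0)$, the pullback of a degree-zero (hence principal) divisor on $\P^1$. The extra care you take over the degenerate case $\xi = \xi_0$ and the well-definedness of $D_{s-1,\xi}$ is sound but not needed beyond what the paper already does implicitly.
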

\begin{proof}
For (a), we have $j < s$  and so appealing to Lemma \ref{lemma:pullback_formula} yields
\[ \ell D_{s,\xi} = \ell[\xi]_s - \ell^{j+1} [\xi_0]_s = \pi_s^* [\xi]_{s-1} - \ell^j \pi_s^* [\xi_0]_{s-1} = \pi_s^* D_{s-1,\xi}.\]
For (b), Lemma \ref{lemma:pullback_formula}(c) applies, and
\[ \ell^{s-j} D_{s,\xi} = \ell^{s-j} [\xi]_s - \ell^s [\xi_0]_s = \Pi_s^* ( [\xi]_0 - [\xi_0]_0).\]
But any degree $0$ divisor on $\P^1$ is principal. Since the pullback of a principal divisor is principal, the result holds.
\end{proof}
We define $\alpha_s \colon \mathcal{W}_s^0 \to A_s$ by the following composition, where the latter two surjective maps are the canonical projections:
\[
\begin{tikzcd}[column sep = large]
\mathcal{W}^0_s \arrow[r, hook] & \Div^0 C_s \arrow[->>]{r} & J_s \arrow[->>]{r} & A_s
\end{tikzcd}
\]
We let $W_s$ denote the image $\alpha_s(\mathcal{W}^0_s)$, and set $\Delta_{s,\xi} := \alpha_s(D_{s,\xi})$.
\begin{lemma}\label{lemma:Ws_in_Asl}
$W_s \leq A_s[\ell]$.
\end{lemma}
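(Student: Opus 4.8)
The plan is to reduce the claim to a statement about the generators of $W_s$ and then propagate the divisor identity of Lemma~\ref{lemma:Ds_xi}(a) through the definition of $\alpha_s$. Since $\alpha_s \colon \mathcal{W}_s^0 \to A_s$ is a homomorphism of abelian groups and, by Lemma~\ref{lemma:W0_basis}, $\mathcal{W}_s^0$ is generated by the divisors $D_{s,\xi}$ with $\xi \in S[\less s] - \{\xi_0\}$, the image $W_s = \alpha_s(\mathcal{W}_s^0)$ is generated by the classes $\Delta_{s,\xi} = \alpha_s(D_{s,\xi})$. Hence it is enough to show that each $\Delta_{s,\xi}$ is killed by $\ell$.

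So fix $\xi \in S[\less s]$, say $\xi \in S[j]$ with $j < s$. By Lemma~\ref{lemma:Ds_xi}(a) we have $\ell D_{s,\xi} = \pi_s^* D_{s-1,\xi}$ as divisors on $C_s$. Now I would chase this through the composition defining $\alpha_s$: the Abel--Jacobi quotient $\Div^0 C_s \twoheadrightarrow J_s$ is compatible with $\pi_s^*$, so the class of $\pi_s^* D_{s-1,\xi}$ in $J_s$ equals $\pi_s^*$ applied to the class of $D_{s-1,\xi}$ in $J_{s-1}$; in particular it lies in $\pi_s^*(J_{s-1})$, which by Proposition~\ref{prop:closed_immersion} (and our convention identifying $J_{s-1}$ with its image) is exactly the subvariety $J_{s-1} \subseteq J_s$. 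Projecting to $A_s = J_s/J_{s-1}$ therefore annihilates it. Using once more that $\alpha_s$ is a homomorphism, $\ell\,\Delta_{s,\xi} = \alpha_s(\ell D_{s,\xi}) = 0$ in $A_s$, so $\Delta_{s,\xi} \in A_s[\ell]$. Combined with the previous paragraph, this gives $W_s \leq A_s[\ell]$.

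I do not expect any real obstacle here beyond bookkeeping. The one point that deserves a moment's care is the compatibility of the three maps in the definition of $\alpha_s$ with $\pi_s^*$ --- that pulling back a divisor and then passing to the Jacobian agrees with passing to the Jacobian and then applying the induced morphism $\pi_s^*$ on Jacobians --- together with the fact (already available from Proposition~\ref{prop:closed_immersion}) that $\pi_s^* \colon J_{s-1} \to J_s$ is a closed immersion onto the chosen copy of $J_{s-1}$. Everything else is immediate, and the argument via part (a) of Lemma~\ref{lemma:Ds_xi} is the natural one given the way the nesting $J_{s-1} \subseteq J_s$ was set up in \S2.
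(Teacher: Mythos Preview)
Your proof is correct and follows essentially the same approach as the paper: both arguments show that $\ell D_{s,\xi}$ lies in $\pi_s^*(\Div^0 C_{s-1})$, hence its image in $J_s$ lands in $J_{s-1}$ and vanishes in $A_s$. The only cosmetic difference is that you invoke Lemma~\ref{lemma:Ds_xi}(a) directly, while the paper re-derives the same fact via the push--pull identity $\pi_s^*\pi_{s,*} = [\ell]$ on totally ramified fibers; your version is slightly more economical since it reuses an already-proven lemma.
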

\begin{proof}
Suppose $\eta \in \Pi_s^{-1}( S[\less s])$. Necessarily, the degree $\ell$ morphism $\pi_s$ is totally ramified at $\eta$. At the level of divisors, we have
\[ \pi_s^* \pi_{s,*} (\eta) = \ell \cdot \eta. \]
For any $\xi \in S[\less s]$, $[\xi]_s$ is a formal sum of such points $\eta$, and so for any $\xi \in S[\less s]$, we have $\pi_s^* \pi_{s,*} ( [\xi]_s ) = \ell[\xi]_s$. Thus $\pi_s^* \pi_{s,*}$ coincides with the multiplication-by-$\ell$ map on $\mathcal{W}_s$, and this condition continues to hold after passing from the subgroup $\mathcal{W}_s^0$ to $J_s$.
In other words, the following diagram is commutative, where the vertical arrows are the natural projections.
\[ \begin{tikzcd}[column sep = large]
\mathcal{W}_s^0 \arrow{rr}{[\ell]} \arrow[d] & & \mathcal{W}_s^0 \arrow[d] \\
J_s \arrow{r}{\pi_{s,*}} \arrow[d, two heads] & J_{s-1} \arrow[r, hook, "\pi_s^*"] & J_s \arrow[d, two heads] \\
A_s \arrow{rr}[swap]{[0]} & & A_s
\end{tikzcd}
\]
It suffices to show that any $\Delta \in W_s$ is killed by $[\ell]$. By definition, there exists $D \in \mathcal{W}_s^0$ such that $\Delta = \alpha_s(D)$. At the level of the Jacobian varieties, the map $\pi_s^*$ is a closed immersion (see the discussion prior to Proposition \ref{prop:closed_immersion}). This implies that the image of $[\ell]D$ in $J_s$ actually lies in the subvariety $J_{s-1}$. But $J_{s-1}$ is precisely the kernel of the projection $J_s \twoheadrightarrow A_s$, and so $[\ell] \Delta = 0$.
\end{proof}
Notice that $A_s[\ell]$ carries the structure of a vector space over $\F_\ell$, and it is immediately clear that $W_s$ is a subspace.
\begin{lemma}\label{lemma:Ws_Gk_stable}
Suppose $1 \leq s \leq n$. The subspace $W_s$ is $G_k$-stable. Moreover, $k(W_s) \subseteq k(S[\less s])$.
\end{lemma}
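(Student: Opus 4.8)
The plan is to exploit that $W_s$ is built entirely from $G_k$-equivariant maps applied to $G_k$-stable objects, so both assertions reduce to bookkeeping with the $G_k$-equivariance of $[\,\cdot\,]_s$ recorded just before Lemma~\ref{lemma:pullback_formula}.

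First I would establish $G_k$-stability. Since $[\,\cdot\,]_s$ is $G_k$-equivariant and each stratum $S[j]$ is $G_k$-stable (noted in \S\ref{sec:S}), the generating set $\{[\xi]_s : \xi \in S[\less s]\}$ of $\mathcal{W}_s$ is permuted by $G_k$; hence $\mathcal{W}_s$, and so $\mathcal{W}_s^0 = \mathcal{W}_s \cap \Div^0 C_s$, is $G_k$-stable. Each of the three arrows composing $\alpha_s$ --- the inclusion $\mathcal{W}_s^0 \hookrightarrow \Div^0 C_s$, the Abel--Jacobi surjection $\Div^0 C_s \twoheadrightarrow J_s$, and the quotient $J_s \twoheadrightarrow A_s$ by the $k$-subvariety $J_{s-1}$ --- is $G_k$-equivariant, so $\alpha_s$ is, and $W_s = \alpha_s(\mathcal{W}_s^0)$ is a $G_k$-stable subgroup of $A_s[\ell]$; by Lemma~\ref{lemma:Ws_in_Asl} it is killed by $\ell$, hence a $G_k$-stable $\F_\ell$-subspace.

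For the containment $k(W_s) \subseteq k(S[\less s])$ it suffices to check that every $\sigma \in G_k$ fixing $S[\less s]$ pointwise acts trivially on $W_s$, since this shows $\ker(G_k \to \Aut(S[\less s])) \subseteq \ker(G_k \to \Aut(W_s))$ and hence, comparing fixed fields, $k(W_s) \subseteq k(S[\less s])$. By Lemma~\ref{lemma:W0_basis}, $\mathcal{W}_s^0$ is generated by the $D_{s,\xi}$ with $\xi \in S[\less s] - \{\xi_0\}$, so $W_s$ is generated by the $\Delta_{s,\xi} = \alpha_s(D_{s,\xi})$; by equivariance of $\alpha_s$ it is then enough to verify $\sigma(D_{s,\xi}) = D_{s,\xi}$ for each such $\xi$. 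Writing $\xi \in S[j]$ with $j < s$, we have $D_{s,\xi} = [\xi]_s - \ell^j[\xi_0]_s$; since $\xi_0 \in S[0] \subseteq S[\less s]$ (here $s \geq 1$), $\sigma$ fixes both $\xi$ and $\xi_0$, and equivariance of $[\,\cdot\,]_s$ gives $\sigma([\xi]_s) = [\sigma\xi]_s = [\xi]_s$ and $\sigma([\xi_0]_s) = [\xi_0]_s$, whence $\sigma(D_{s,\xi}) = D_{s,\xi}$.

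I do not expect a genuine obstacle here; the only subtlety worth emphasizing is that $\sigma$ fixing the divisor $[\xi_0]_s$ does not require $\sigma$ to fix the points of $\Pi_s^{-1}(\xi_0)$ individually --- in general it permutes them --- but the full unweighted fiber $[\xi_0]_s$ is fixed simply because $\Pi_s$ is defined over $k$. The only other point to spell out carefully is the $G_k$-equivariance of the Abel--Jacobi map and of $J_s \to A_s$, which is immediate from the fact that $C_s$, $\pi_s$, $J_s$ and $A_s$ are all defined over $k$.
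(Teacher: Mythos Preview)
Your proof is correct and follows essentially the same argument as the paper: both show $\mathcal{W}_s$ and hence $\mathcal{W}_s^0$ are $G_k$-stable via the equivariance of $[\,\cdot\,]_s$ and the stability of $S[\less s]$, then push forward through the equivariant map $\alpha_s$; for the containment, both check that any $\sigma$ fixing $S[\less s]$ pointwise fixes each $D_{s,\xi}$ and hence each $\Delta_{s,\xi}$. Your version is slightly more explicit about why $\alpha_s$ is equivariant and about the subtlety that $\sigma$ need only fix $[\xi_0]_s$ as a divisor, but the substance is the same.
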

\begin{proof}
Recall that the morphisms $\Pi_s$ are $k$-rational and that $S[\less s]$ is $G_k$-stable. The set of divisors $\{ [\xi]_s : \xi \in S[\less s] \}$ is permuted by the action of $G_k$. It follows that the subgroup $\mathcal{W}_s$ of $\Div C_s$ is $G_k$-stable. Thus, $\mathcal{W}^0_s$ is $G_k$-stable, as it is the intersection of two $G_k$-stable subgroups. Now $W_s$ must be $G_k$-stable, as it is the image of a $G_k$-stable set under the $G_k$-equivariant map $\alpha_s$.

Suppose $\sigma \in G_k$ fixes $S[\less s]$ pointwise. Then for any $\xi \in S[\less s]$, the divisor $[\xi]_s$ is fixed by $\sigma$. Thus the generators $D_{s,\xi}$ of $\mathcal{W}^0_s$ are also fixed by $\sigma$. Now the $G_k$-equivariance of $\alpha_s$ demonstrates that $W_s$ is fixed pointwise by $\sigma$, and so $k(W_s) \subseteq k(S[\less s])$. \end{proof}

\section{The kernel of $\alpha_s$}

In this section, we determine the kernel of $\alpha_s$. This will allow us to determine the $\F_\ell$-dimension of $W_s$. It will also be useful in the later sections, when we consider the extension $k(S[\less s])/k(W_s)$ more thoroughly. In light of Lemma \ref{lemma:Ws_Gk_stable}, it is natural to ask if this extension is trivial, particularly in the case $s = n$ (i.e., $S[\less s] = S$). The answer is usually yes, but there are some exceptional cases (even beyond some trivial cases where $C$ has genus zero).

For any $\xi \in S[j]$, set $u_\xi := \ell^{-j} n_\xi$. By definition of $S[j]$, $u_\xi$ is an integer which is prime to $\ell$. For any $1 \leq s \leq n$, we define a divisor on $C_s$,
\begin{equation}\label{eqn:Rs_defn}
\mathfrak{R}_s := \sum_{j=0}^{s-1} \sum_{\xi \in S[j]} u_\xi D_{s,\xi} \in \mathcal{W}^0_s.
\end{equation}
\begin{lemma}\label{lemma:Rs_principal}
The divisor $\mathfrak{R}_s$ is principal.
\end{lemma}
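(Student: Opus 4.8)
The plan is to exhibit an explicit rational function on $C_s$ whose divisor is exactly $\mathfrak{R}_s$. The natural starting point is $y_s$ itself: the defining relation $y_s^{\ell^s} = f(x)$ forces $\divisor_{C_s}(y_s)$ to be $\tfrac{1}{\ell^s}$ times the $\Pi_s$-pullback of $\divisor_{\P^1} f$, and the latter is supported on $S$ and on $\infty$. Using $\divisor_{\P^1} f = \sum_{\xi\in S} n_\xi\cdot\xi - n_\infty\cdot\infty$ together with Lemma \ref{lemma:pullback_formula}(c)--(d) and the identity $n_\xi = \ell^j u_\xi$ for $\xi\in S[j]$, one computes
\[ \divisor_{C_s}(y_s) \;=\; \sum_{\xi\in S[\less s]} u_\xi\,[\xi]_s \;+\; \sum_{\xi\in S \setminus S[\less s]} \tfrac{n_\xi}{\ell^s}\,[\xi]_s \;-\; \tfrac{n_\infty}{\ell^s}\,[\infty]_s . \]
Here every exponent is a genuine integer: $\ell^s \mid n_\xi$ whenever $\xi\in S[j]$ with $j\ge s$, by definition of the strata, and $\ell^s\mid n_\infty$ because $\infty$ is not a branch point, so $\ell^n \mid n_\infty$ by \eqref{eqn:n_infty} and the remark following it, while $s\le n$.

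Next I would peel off the unwanted contributions with elementary corrections. For any $\xi\notin S[\less s]$, Lemma \ref{lemma:pullback_formula}(d) (applied to $\xi$ and to $\infty$) gives $\divisor_{C_s}(x-\xi) = [\xi]_s - [\infty]_s$, so replacing $y_s$ by $y_s\prod_{\xi\in S\setminus S[\less s]}(x-\xi)^{-n_\xi/\ell^s}$ cancels every term $[\xi]_s$ with $\xi\notin S[\less s]$; a short bookkeeping computation using $n_\infty = \sum_{\xi\in S} n_\xi$ shows the coefficient of $[\infty]_s$ then becomes $-N/\ell^s$, where $N := \sum_{\xi\in S[\less s]} n_\xi$. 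This is again an integer, since $N = n_\infty - \sum_{\xi\in S\setminus S[\less s]} n_\xi$ is a difference of multiples of $\ell^s$. Finally, since $\xi_0\in S[0]$, Lemma \ref{lemma:pullback_formula}(c) gives $\divisor_{C_s}(x-\xi_0) = \ell^s[\xi_0]_s - [\infty]_s$, so one multiplies by $(x-\xi_0)^{-N/\ell^s}$ to trade the remaining $[\infty]_s$ term for $-N[\xi_0]_s$. The upshot is that the function
\[ g_s \;:=\; y_s \cdot (x-\xi_0)^{-N/\ell^s} \prod_{\xi\in S\setminus S[\less s]} (x-\xi)^{-n_\xi/\ell^s} \;\in\; \bar{k}(C_s)^\times \]
has $\divisor_{C_s}(g_s) = \sum_{\xi\in S[\less s]} u_\xi[\xi]_s - N[\xi_0]_s$. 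On the other hand, expanding $\mathfrak{R}_s = \sum_{j=0}^{s-1}\sum_{\xi\in S[j]} u_\xi D_{s,\xi} = \sum_{j=0}^{s-1}\sum_{\xi\in S[j]} u_\xi\bigl([\xi]_s - \ell^j[\xi_0]_s\bigr)$ and using $\sum_{j=0}^{s-1}\sum_{\xi\in S[j]} u_\xi\ell^j = \sum_{\xi\in S[\less s]} n_\xi = N$ shows that $\mathfrak{R}_s$ equals this very divisor. Hence $\mathfrak{R}_s = \divisor_{C_s}(g_s)$ is principal.

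The argument is purely divisor bookkeeping, so I do not expect a genuine conceptual obstacle; the one point that needs care — and is really the crux — is verifying that the exponents $N/\ell^s$ and $n_\xi/\ell^s$ appearing in $g_s$ are integers, so that $g_s$ is an honest rational function rather than a formal fractional symbol. This is exactly where the normalization of Lemma \ref{lemma:niceform} (so that $\infty$ is unbranched, whence $\ell^n\mid n_\infty$) and the definition of the strata $S[j]$ are used. An equivalent, slightly more structural phrasing would observe that $\mathfrak{R}_s - \divisor_{C_s}(y_s)$ is an integral combination of divisors each of which is a rational multiple of a $\Pi_s$-pullback of a degree-zero divisor on $\P^1$, and then invoke Lemma \ref{lemma:Ds_xi}; but writing down the single function $g_s$ above is the most transparent route.
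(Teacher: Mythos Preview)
Your proof is correct and follows essentially the same approach as the paper's: both start from $\divisor_{C_s} y_s = \ell^{-s}\,\Pi_s^*(\divisor_{\P^1} f)$, expand via Lemma~\ref{lemma:pullback_formula}, and then show that $\divisor_{C_s} y_s - \mathfrak{R}_s$ is the $\Pi_s$-pullback of a degree-$0$ divisor on $\P^1$ (hence principal). The only difference is cosmetic: the paper packages the correction abstractly as $\Pi_s^* Y$ with $Y\in\Div^0\P^1$, whereas you write down the explicit function $g_s$; your final paragraph already notes this equivalence.
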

\begin{proof}
Recall that $n_\infty - \sum_{\xi \in S} n_{\xi} = 0$, by \eqref{eqn:n_infty}. Consequently, for \emph{any} divisor $E$:
\begin{equation}\label{eqn:trivial_divisor}
0 = \left( \sum_{j=0}^{s-1} \sum_{\xi \in S[j]} n_\xi \right)E + \left( \sum_{j=s}^{n-1} \sum_{\xi \in S[j]} n_\xi \right)E - n_\infty E.
\end{equation}
View $f$ as a morphism $\P^1 \to \P^1$ and define $f_s := f \circ \Pi_s \colon C_s \to \P^1$. As an element of $k(C_s) = k(x, y_s)$, $f_s$ coincides with $y_s^{\ell^s}$. By Lemma \ref{lemma:pullback_formula},
\[ \begin{split}
\divisor_{C_s} y_s & = \ell^{-s} \cdot \divisor_{C_s} f_s = \ell^{-s} \Pi_s^* (\divisor_{\P^1} f) \\
 & = \ell^{-s} \Pi_s^* \bigl( \sum_{\xi \in S} n_\xi \cdot \xi - n_\infty \cdot \infty \bigr) \\
 & = \ell^{-s} \bigl( \sum_{j=0}^{s-1} \sum_{\xi \in S[j]} n_\xi \ell^{s-j} [\xi]_s + \sum_{j=s}^{n-1} \sum_{\xi \in S[j]} n_\xi \cdot [\xi]_s - n_\infty \cdot [\infty]_s \bigr). \\
\end{split} \]
Now, taking $E = [\xi_0]_s$ and ``adding $0$'' via \eqref{eqn:trivial_divisor}, we obtain
\[
\begin{split}
\divisor_{C_s} y_s & = \sum_{j=0}^{s-1} \sum_{\xi \in S[j]} u_\xi \left( [\xi]_s - \ell^j [\xi_0]_s \right) \\
& {} \qquad \qquad + \sum_{j=s}^{n-1} \sum_{\xi \in S[j]} \frac{n_\xi}{\ell^s} \left( [\xi]_s - \ell^s [\xi_0]_s \right) - \frac{n_\infty}{\ell^s} \left( [\infty]_s - \ell^s [\xi_0]_s \right). \\
& = \mathfrak{R}_s + \sum_{j=s}^{n-1} \sum_{\xi \in S[j]} \frac{n_\xi}{\ell^s} \Pi_s^* \left( [\xi]_0 - [\xi_0]_0 \right) - \frac{n_\infty}{\ell^s} \Pi_s^* \left( [\infty]_0 - [\xi_0]_0 \right) \\
& = \mathfrak{R}_s + \Pi_s^* Y,
\end{split}
\]
where $Y \in \Div^0 \P^1$. But every degree $0$ divisor on $\P^1$ is principal, and the result follows.
\end{proof}

\begin{proposition}
For any $1 \leq s \leq n$, $\ker \alpha_s = \langle \mathfrak{R}_s \rangle + \ell \mathcal{W}_s^0$. That is, the following sequence is exact:
\begin{equation}\label{eqn:alpha_s_seq}
\begin{tikzcd}[column sep = 1cm]
0 \ar{r} & \langle \mathfrak{R}_s \rangle + \ell \mathcal{W}^0_s \ar{r}{\iota} & \mathcal{W}^0_s \ar{r}{\alpha_s} & W_s \ar{r} & 0
\end{tikzcd} \end{equation}
\end{proposition}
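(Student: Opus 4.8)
The plan is to establish the two inclusions separately; ``$\supseteq$'' is immediate, while ``$\subseteq$'' contains all the work. For ``$\supseteq$'', note $\mathfrak{R}_s$ is principal by Lemma~\ref{lemma:Rs_principal}, so $\alpha_s(\mathfrak{R}_s) = 0$, and $W_s \leq A_s[\ell]$ by Lemma~\ref{lemma:Ws_in_Asl}, so $\alpha_s(\ell D) = \ell\,\alpha_s(D) = 0$ for every $D \in \mathcal{W}^0_s$; since $\ker\alpha_s$ is a subgroup it therefore contains $\langle\mathfrak{R}_s\rangle + \ell\mathcal{W}^0_s$.

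For ``$\subseteq$'' I would work over $\bar{k}$ and exploit the Galois structure of $\pi_s$. Let $\sigma$ generate $\Gal(C_s/C_{s-1}) \cong \Z/\ell$, normalized so that $\sigma(y_s) = \zeta y_s$ for a fixed primitive $\ell$-th root of unity $\zeta$. Two preliminary points: first, $\pi_s$ is totally ramified over every point of $\Pi_s^{-1}(S[\less s])$ (as in Lemma~\ref{lemma:pullback_formula} and its use in Lemma~\ref{lemma:Ws_in_Asl}), so $\sigma$ fixes all these points and hence acts trivially on $\mathcal{W}_s$, in particular on $\mathcal{W}^0_s$; second, writing $N := 1 + \sigma + \cdots + \sigma^{\ell-1} = \pi_s^*\pi_{s,*}$, the subvariety $J_{s-1} = \pi_s^*(\Jac C_{s-1}) \subseteq J_s$ coincides with $N(J_s)$, because $\pi_{s,*}$ is onto $\Jac C_{s-1}$ (from $\pi_{s,*}\pi_s^* = [\ell]$ and divisibility of $\Jac C_{s-1}$).

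Now let $D \in \ker\alpha_s$. By the definition of $\alpha_s$, the class of $D$ in $J_s$ lies in $J_{s-1} = N(J_s)$, so $D - NE = \divisor_{C_s}(g)$ for some $E \in \Div^0 C_s$ and $g \in \bar{k}(C_s)^\times$. Applying $\sigma$ and using $\sigma D = D$ and $\sigma N = N$ (as $\sigma^\ell = \id$) gives $\divisor(\sigma g) = \divisor(g)$, whence $\sigma g = c g$ for a constant $c \in \bar{k}^\times$ with $c^\ell = 1$; write $c = \zeta^a$, $0 \leq a < \ell$. Then $g/y_s^a$ is $\sigma$-invariant, hence equals $\pi_s^* g'$ for a unique $g' \in \bar{k}(C_{s-1})^\times$, so $g = y_s^a\cdot\pi_s^* g'$. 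Plugging in the intermediate identity $\divisor_{C_s}(y_s) = \mathfrak{R}_s + \Pi_s^* Y$ from the proof of Lemma~\ref{lemma:Rs_principal} (with $Y \in \Div^0\P^1$), together with $NE = \pi_s^*(\pi_{s,*}E)$ and $\Pi_s^* Y = \pi_s^*(\Pi_{s-1}^* Y)$, and rearranging, gives
\[ D = a\,\mathfrak{R}_s + \pi_s^* E'', \qquad E'' := a\,\Pi_{s-1}^* Y + \pi_{s,*}E + \divisor_{C_{s-1}}(g') \in \Div^0 C_{s-1}. \]

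It then suffices to show $\pi_s^* E'' = D - a\mathfrak{R}_s$ lies in $\ell\mathcal{W}^0_s$, and this descent is where I expect the main difficulty. Since $D - a\mathfrak{R}_s \in \mathcal{W}^0_s$, its support lies in $\Pi_s^{-1}(S[\less s])$, the ramification locus of $\pi_s$ on $C_s$; because $\pi_s$ carries the complement of this locus into the complement of $\Pi_{s-1}^{-1}(S[\less s])$ and is unramified there, no point of $E''$ outside $\bigcup_{\xi \in S[\less s]} \Supp[\xi]_{s-1}$ can occur. Over each such $\xi$ the map $\pi_s$ is totally ramified, so $\pi_s^*$ multiplies the coefficient of every point by $\ell$ and sends $\Supp[\xi]_{s-1}$ bijectively onto $\Supp[\xi]_s$; matching this against the fact that the coefficients of $D - a\mathfrak{R}_s \in \mathcal{W}_s$ are constant along each fiber $[\xi]_s$ forces the coefficients of $E''$ to be constant and integral along each $\Supp[\xi]_{s-1}$, so $E'' = \sum_{\xi \in S[\less s]} c_\xi\,[\xi]_{s-1}$ with $c_\xi \in \Z$ and $\pi_s^* E'' = \ell\sum_\xi c_\xi\,[\xi]_s$. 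The degree-zero condition on $D - a\mathfrak{R}_s$ then places $\sum_\xi c_\xi\,[\xi]_s$ in $\mathcal{W}^0_s$, so $D - a\mathfrak{R}_s \in \ell\mathcal{W}^0_s$ and hence $D \in \langle\mathfrak{R}_s\rangle + \ell\mathcal{W}^0_s$, completing the argument. Beyond the descent, the only other points requiring care are the identity $J_{s-1} = N(J_s)$ and the Kummer-style normalization $g = y_s^a\pi_s^* g'$, both routine once everything is set up over $\bar{k}$.
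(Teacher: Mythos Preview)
Your argument is correct and follows the same overall plan as the paper: write an element of $\ker\alpha_s$ as $a\,\mathfrak{R}_s$ plus a $\pi_s^*$-pullback, then show the pullback lies in $\ell\mathcal{W}^0_s$. The implementations differ in two places, and in both your version is marginally more direct. For the ``Kummer'' step, the paper first pushes $\divisor_{C_s} h^\ell$ down to $C_{s-1}$ (via $\ell D_{s,\xi}=\pi_s^*D_{s-1,\xi}$), shows the resulting divisor is principal using injectivity of $\pi_s^*$ on Jacobians, and then invokes classical Kummer theory in $\bar{k}(C_{s-1})$ to obtain $q=y_{s-1}^i r^\ell$; you instead stay on $C_s$ and read off $g=y_s^a\,\pi_s^*g'$ from the $\sigma$-eigenvalue of $g$, which is the same content but avoids the detour. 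For the final descent, the paper multiplies $\sum b_\xi D_{s,\xi}$ by $\ell$, uses injectivity of $\pi_s^*$ on divisors, and then appeals to the torsion-freeness of $\Div^0 C_{s-1}/\mathcal{V}^0_{s-1}$ (Lemma~\ref{lemma:Vs0_free}) to conclude $\ell\mid b_\xi$; your support-and-coefficient comparison over the totally ramified fibers reaches the same conclusion without needing that lemma. One small wording slip: ``$\pi_s$ is totally ramified over every point of $\Pi_s^{-1}(S[\less s])$'' should read ``at every point'' (or ``over every point of $\Pi_{s-1}^{-1}(S[\less s])$''), but the intended meaning is clear.
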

\begin{proof}
That $\langle \mathfrak{R}_s \rangle + \ell \mathcal{W}^0_s \leq \ker \alpha_s$ follows immediately from Lemmas \ref{lemma:Ws_in_Asl} and \ref{lemma:Rs_principal}. For the reverse containment, suppose $Z \in \mathcal{W}^0_s$ satisfies $\alpha_s(Z) = 0$. By the definition of $\alpha_s$ the divisor $Z$ is, up to some principal divisor on $C_s$, an element of $J_{s-1}$. In other words, there must exist $h \in \bar{k}(C_s)^\times$ and $D \in \Div^0 C_{s-1}$ such that $Z = \divisor_{C_s} h + \pi_s^* D$.

Choose $m_\xi \in \Z$ such that $Z = \sum_{j<s} \sum_{\xi \in S[j]} m_\xi D_{s,\xi}$, and set
\[ E := \sum_{j=0}^{s-1} \sum_{\xi \in S[j]} m_\xi D_{s-1,\xi} - \ell D. \]
By Lemma \ref{lemma:Ds_xi} (a),
\[ \begin{split}
\divisor_{C_s} h^\ell = \ell (Z - \pi_s^* D) & = \sum_{j=0}^{s-1} \sum_{\xi \in S[j]} m_\xi \ell D_{s,\xi} - \ell \pi_s^* D \\
& = \sum_{j=0}^{s-1} \sum_{\xi \in S[j]} m_\xi \pi_s^* D_{s-1,\xi} - \ell \pi_s^*D = \pi_s^* E.
\end{split} \]
Consequently, $\pi_s^*E \equiv 0$ in $J_s$. But $\pi_s^* \colon J_{s-1} \to J_s$ is an immersion and so $E \equiv 0$ in $J_{s-1}$. Thus, there exists $q \in \bar{k}(C_{s-1})$ such that $\divisor_{C_{s-1}} q = E$. There is some $\gamma \in \bar{k}^\times$ such that $h^\ell = \gamma \cdot \pi_s^*q$, which demonstrates that
\[ (\pi_s^* q)^{\frac{1}{\ell}} \in \bar{k}(C_s) = \bar{k}(C_{s-1})(y_{s-1}^{\frac{1}{\ell}}), \]
and the standard facts of Kummer theory (e.g., \cite[\S4.7, pg.~58]{Koch:ANT}) guarantee the existence of an integer $0 \leq i \leq \ell - 1$ and a function $r \in \bar{k}(C_{s-1})^\times$ such that $q = y_{s-1}^i \cdot r^\ell$. In terms of divisors,
\[ \divisor_{C_{s-1}} q = i \divisor_{C_{s-1}} y_{s-1} + \ell \divisor_{C_{s-1}} r. \]
Observing
\[ \begin{split}
\divisor_{C_s} h^\ell & = \pi_s^* \divisor_{C_{s-1}} q \\
 & = i \pi_s^* \divisor_{C_{s-1}} y_{s-1} + \ell \pi_s^* \divisor_{C_{s-1}} r \\
 & = i \ell \divisor_{C_s} y_s + \ell \pi_s^* \divisor_{C_{s-1}} r,
\end{split} \]
we conclude $\divisor_{C_s} h = i \divisor_{C_s} y_s + \pi_s^* \divisor_{C_{s-1}} r$. Already in the proof of Lemma \ref{lemma:Rs_principal}, we see $\divisor_{C_s} y_s = \mathfrak{R}_s + \Pi_s^* \divisor_{\P^1} g_0$ for some $g_0 \in \bar{k}(\P^1)$. Thus
\[ \begin{split}
Z & = i \divisor_{C_s} y_s + \pi_s^* \divisor_{C_{s-1}} r + \pi_s^*D \\
  & = i \mathfrak{R}_s + i \Pi_s^* \divisor_{\P^1} g_0 + \pi_s^* \divisor_{C_{s-1}} r + \pi_s^* D \\
  & = i \mathfrak{R}_s + \pi_s^* (D + \divisor_{C_{s-1}} g)
\end{split} \]
for some $g \in \bar{k}(C_{s-1})$. On the other hand, $Z - i \mathfrak{R}_s \in \mathcal{W}_s^0$, and so there exist integers $b_\xi$ such that
\[ \pi_s^* (D + \divisor_{C_{s-1}} g)  = \sum b_\xi D_{s,\xi}, \]
with the sum running over $\xi \in S[\less s] - \{\xi_0\}$. Appealing to Lemma \ref{lemma:Ds_xi} however, we have
\[ \ell \pi_s^* (D + \divisor_{C_{s-1}} g) = \sum \ell b_\xi D_{s,\xi} = \sum b_\xi \pi_s^* D_{s-1,\xi}.
\]
As $\pi_s^*$ is injective on divisors, we see $\sum b_\xi D_{s-1,\xi} \in \ell \Div^0 C_{s-1}$. By Lemma \ref{lemma:Vs0_free}, $\ell \mid b_\xi$ for every $\xi$, and so $Z \in \langle \mathfrak{R}_s \rangle + \ell \mathcal{W}_s^0$, completing the proof.
\end{proof}

\section{$W_s$ as an $\F_\ell[G_k]$-module}

We now consider the structure of $W_s$ as an $\F_\ell[G_k]$-module more carefully. Let $X/k$ be a curve. For any $E \in \Div X$, we write $\overline{E}$ for the element $E \otimes 1$ in $\Div X \otimes \F_\ell$. Notice that $\Div X \otimes \F_\ell$ carries the structure of an $\F_\ell[G_k]$-module. Moreover, if $M \leq \Div X$ is any $G_k$-stable subgroup satisfying $((\Div X)/M)[\ell] = \{0 \}$, then $M \otimes \F_\ell$ is naturally an $\F_\ell[G_k]$-submodule of $\Div X \otimes \F_\ell$. Thus, $\mathcal{W}_s^0 \otimes \F_\ell$ is an $\F_\ell[G_k]$-submodule of $\Div C_s \otimes \F_\ell$, by Lemma \ref{lemma:W0_basis}.
\begin{lemma}\label{lemma:WsFl}
For any $1 \leq s \leq n$, $k(S[\less s]) = k(\mathcal{W}_s \otimes \F_\ell)$.
\end{lemma}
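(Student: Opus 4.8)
The plan is to recognize $\mathcal{W}_s$ as a permutation $G_k$-module and then check that reduction modulo $\ell$ does not collapse the permutation action, so that the mod-$\ell$ representation on $\mathcal{W}_s \otimes \F_\ell$ detects exactly the same Galois elements as the bare permutation action on $S[\less s]$.

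First I would record that the divisors $\{[\xi]_s : \xi \in S[\less s]\}$ form a $\Z$-basis of $\mathcal{W}_s$: they are reduced divisors with pairwise disjoint support (the fibers $\Pi_s^{-1}(\xi)$, $\xi \in S[\less s]$, are mutually disjoint), so by Lemma \ref{lemma:torfree} — or simply by inspecting the coefficient of a point in the support of any one $[\xi]_s$ — there can be no nontrivial $\Z$-linear relation among them. Since $[\,\cdot\,]_s$ is $G_k$-equivariant and $S[\less s]$ is $G_k$-stable, the $G_k$-action on $\mathcal{W}_s$ permutes this basis in the same way it permutes $S[\less s]$; that is, there is an isomorphism of $\Z[G_k]$-modules $\mathcal{W}_s \cong \Z[S[\less s]]$ sending $[\xi]_s \mapsto \xi$. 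Tensoring with $\F_\ell$ then gives an isomorphism of $\F_\ell[G_k]$-modules $\mathcal{W}_s \otimes \F_\ell \cong \F_\ell[S[\less s]]$, under which the classes $\overline{[\xi]_s}$ constitute an $\F_\ell$-basis permuted by $G_k$ according to the permutation action on $S[\less s]$.

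Next I would compare the kernels of the two representations $G_k \to \mathrm{Sym}(S[\less s])$ and $G_k \to \Aut_{\F_\ell}(\mathcal{W}_s \otimes \F_\ell)$. If $\sigma \in G_k$ fixes $S[\less s]$ pointwise, it fixes each basis vector $\overline{[\xi]_s}$ and hence acts trivially on $\mathcal{W}_s \otimes \F_\ell$, giving $k(\mathcal{W}_s \otimes \F_\ell) \subseteq k(S[\less s])$ (an inclusion already implicit in the reasoning of Lemma \ref{lemma:Ws_Gk_stable}). Conversely, if $\sigma$ acts trivially on $\mathcal{W}_s \otimes \F_\ell$, then $\overline{[\sigma\xi]_s} = \sigma \cdot \overline{[\xi]_s} = \overline{[\xi]_s}$ for every $\xi \in S[\less s]$; because the $\overline{[\xi]_s}$ are pairwise distinct (being members of an $\F_\ell$-basis), this forces $\sigma\xi = \xi$. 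Hence the two kernels coincide and so do their fixed fields, which is the asserted equality $k(S[\less s]) = k(\mathcal{W}_s \otimes \F_\ell)$.

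The argument is essentially formal; the only point requiring care — and the step I would flag as the crux — is the claim that passing to $\F_\ell$-coefficients does not identify distinct basis vectors, i.e. that $\overline{[\xi]_s} \neq \overline{[\xi']_s}$ for $\xi \neq \xi'$ in $S[\less s]$. This is precisely what the disjoint supports (equivalently, the $\Z$-linear independence) of the $[\xi]_s$ secure: a free permutation module over $\Z$ reduces to the corresponding free permutation module over $\F_\ell$ with no loss of information about the underlying set action.
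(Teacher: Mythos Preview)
Your proof is correct and follows essentially the same approach as the paper: both identify $\mathcal{W}_s \otimes \F_\ell$ with the permutation module $\F_\ell[S[\less s]]$ via $\xi \mapsto \overline{[\xi]}_s$, and deduce the equality of fixed fields. You spell out the kernel comparison that the paper dismisses with ``clearly $k(S[\less s]) = k(M[\less s])$,'' but the substance is the same.
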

\begin{proof}
We introduce $\F_\ell[G_k]$-modules $M[j]$ and $M[\less s]$, defined as
\[ M[j] := \bigoplus_{\xi \in S[j]} \F_\ell \xi, \qquad \qquad M[\less s] := \bigoplus_{j=0}^{s-1} M[j]. \]
The action of $G_k$ on $M[j]$ is induced from that on $S[j]$ in the obvious way: $\sigma \cdot (\sum a_\xi \cdot \xi) = \sum a_\xi \cdot \sigma(\xi)$. Clearly $k(S[\less s]) = k(M[\less s])$. As there is also an isomorphism of $\F_\ell[G_k]$-modules,
\[ M[\less s] \cong \mathcal{W}_s \otimes \F_\ell, \qquad \xi \mapsto \overline{[\xi]}_s, \]
it follows that $k(S[\less s]) = k(\mathcal{W}_s \otimes \F_\ell)$.
\end{proof}
\begin{lemma}\label{lemma:Rsbar}
Suppose $1 \leq s \leq n$. In $\Div C_s \otimes \F_\ell$, we have
\begin{equation}\label{eqn:Rsbar}
\overline{\mathfrak{R}}_s = \sum_{j=0}^{s-1} \sum_{\xi \in S[j]} u_\xi \overline{[\xi]}_s, \qquad u_\xi \not\equiv 0 \pmod{\ell}.
\end{equation}
Moreover, $\overline{\mathfrak{R}}_s \in (\mathcal{W}_s^0 \otimes \F_\ell)^{G_k}$.
\end{lemma}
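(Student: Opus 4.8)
The plan is to reduce the defining expression $\mathfrak{R}_s = \sum_{j=0}^{s-1}\sum_{\xi\in S[j]} u_\xi D_{s,\xi}$ modulo $\ell$ and read both assertions off the result. First I would substitute $D_{s,\xi} = [\xi]_s - \ell^j[\xi_0]_s$ and pass to $\Div C_s\otimes\F_\ell$. For $j\geq 1$ the coefficient $\ell^j$ of $\overline{[\xi_0]}_s$ is divisible by $\ell$, so only the $j=0$ stratum contributes a $[\xi_0]_s$-term, giving
\[ \overline{\mathfrak{R}}_s = \sum_{j=0}^{s-1}\sum_{\xi\in S[j]} u_\xi\overline{[\xi]}_s \;-\; \Bigl(\sum_{\xi\in S[0]} u_\xi\Bigr)\overline{[\xi_0]}_s. \]
Since $u_\xi = n_\xi$ for $\xi\in S[0]$, the last coefficient is $\sum_{\xi\in S[0]} n_\xi$, and the computation already carried out in the proof of Lemma \ref{lemma:S0-nonempty} (from $n_\infty=\sum_{\xi\in S} n_\xi$ together with $\ell\mid n_\xi$ for $\xi\in S\setminus S[0]$) shows $\sum_{\xi\in S[0]} n_\xi\equiv 0\pmod\ell$. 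Hence the subtracted term vanishes in $\Div C_s\otimes\F_\ell$, which is exactly the displayed formula \eqref{eqn:Rsbar}. The inequality $u_\xi\not\equiv 0\pmod\ell$ is simply the definition of the stratum: $u_\xi=\ell^{-j}n_\xi$ with $\ord_\ell n_\xi = j$ for $\xi\in S[j]$.

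For the membership $\overline{\mathfrak{R}}_s\in\mathcal{W}_s^0\otimes\F_\ell$, I would note that $\mathfrak{R}_s$ is by construction a $\Z$-combination of the divisors $D_{s,\xi}$ with $\xi\in S[\less s]$, which form a $\Z$-basis of $\mathcal{W}_s^0$ by Lemma \ref{lemma:W0_basis}; that same lemma gives that $\Div C_s/\mathcal{W}_s^0$ is torsion-free, so (as discussed at the start of this section) $\mathcal{W}_s^0\otimes\F_\ell$ injects into $\Div C_s\otimes\F_\ell$ and $\overline{\mathfrak{R}}_s$ genuinely lies in this submodule. For $G_k$-invariance I would work directly from \eqref{eqn:Rsbar}: because $f\in k[x]$, conjugate roots have equal multiplicity, so $u_{\sigma\xi}=u_\xi$ for all $\sigma\in G_k$, while $[\,\cdot\,]_s$ is $G_k$-equivariant, so $\sigma\cdot\overline{[\xi]}_s = \overline{[\sigma\xi]}_s$. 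Since $\sigma$ permutes each stratum $S[j]$, reindexing the double sum shows $\sigma\cdot\overline{\mathfrak{R}}_s=\overline{\mathfrak{R}}_s$. Combined with the previous point, $\overline{\mathfrak{R}}_s\in(\mathcal{W}_s^0\otimes\F_\ell)^{G_k}$.

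I do not expect a real obstacle here; the single point that needs care is that the base point $\xi_0\in S[0]$ need not be $G_k$-fixed, so the individual divisors $D_{s,\xi}$ are \emph{not} permuted by $G_k$ and $\mathfrak{R}_s$ itself is in general not $G_k$-invariant in $\Div C_s$. It is precisely the passage to $\Div C_s\otimes\F_\ell$ — where the $\overline{[\xi_0]}_s$ contributions disappear thanks to the divisibility recorded in Lemma \ref{lemma:S0-nonempty} — that makes the invariance transparent. Everything else is bookkeeping with the stratification of $S$.
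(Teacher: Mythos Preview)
Your proposal is correct and follows essentially the same approach as the paper: expand $\mathfrak{R}_s$ via $D_{s,\xi} = [\xi]_s - \ell^j[\xi_0]_s$, show the $[\xi_0]_s$-coefficient vanishes modulo $\ell$, and then read off $G_k$-invariance from the constancy of $u_\xi$ on Galois orbits. The only minor variation is in how the $[\xi_0]_s$-coefficient is killed: the paper gathers the full coefficient $\sum_{j<s}\sum_{\xi\in S[j]} n_\xi$ and shows it is divisible by $\ell^s$ (writing it as $n_\infty - \sum_{j\geq s}\sum_\xi n_\xi$), whereas you first discard the $j\geq 1$ contributions by $\ell\mid u_\xi\ell^j$ and then invoke the congruence $\sum_{\xi\in S[0]} n_\xi\equiv 0\pmod\ell$ from Lemma~\ref{lemma:S0-nonempty}; both arguments are equivalent for the purpose at hand.
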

\begin{proof}
First, suppose that \eqref{eqn:Rsbar} holds. By \eqref{eqn:Rs_defn}, $\overline{\mathfrak{R}}_s$ clearly lies in $\mathcal{W}_s^0 \otimes \F_\ell$. Because the polynomial $f(x)$ appearing in the affine model of $C$ is defined over $k$, the value $n_\xi$ is constant as $\xi$ runs over any $G_k$-orbit within the roots of $f(x)$. Moreover, each such orbit is a subset of $S[j]$ for some $j$, and so the coefficients $u_\xi$ are also constant over each orbit. Thus, $\overline{\mathfrak{R}}_s$ is fixed by the action of $G_k$.

It remains to show \eqref{eqn:Rsbar}. By the definition of $D_{s,\xi}$, we have in $\Div C_s$:
\[ \mathfrak{R}_s = \sum_{j=0}^{s-1} \sum_{\xi \in S[j]} \frac{n_\xi}{\ell^j} [\xi]_s - \left( \sum_{j=0}^{s-1} \sum_{\xi \in S[j]} n_\xi \right) \cdot [\xi_0]_s. \]
Recall that $n_\infty = \sum_{\xi \in S} n_\xi$ and (because we have arranged $\infty$ to lie outside the branch locus) $n_\infty \equiv 0 \pmod{\ell^n}$. By definition, $\ell^j \mid n_\xi$ if $\xi \in S[j]$. Consequently,
\[ \sum_{j=0}^{s-1} \sum_{\xi \in S[j]} n_\xi = n_\infty - \sum_{j=s}^n \sum_{\xi \in S[j]} n_\xi \]
is divisible by $\ell^s$ and $\overline{\mathfrak{R}}_s$ has the claimed form.
\end{proof}
\begin{lemma}\label{lemma:Ws_FlGk}
As $\F_\ell[G_k]$-modules, $W_s \cong (\mathcal{W}_s^0 \otimes \F_\ell)/\langle \overline{\mathfrak{R}}_s \rangle$.
\end{lemma}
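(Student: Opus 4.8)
The plan is to read the claimed isomorphism off directly from the exact sequence \eqref{eqn:alpha_s_seq}, combined with the elementary identification of $\mathcal{W}_s^0/\ell\mathcal{W}_s^0$ with $\mathcal{W}_s^0 \otimes \F_\ell$. The sequence \eqref{eqn:alpha_s_seq} exhibits $W_s$ as the quotient group $\mathcal{W}_s^0 / (\langle \mathfrak{R}_s \rangle + \ell\mathcal{W}_s^0)$; since $\alpha_s$ is $G_k$-equivariant and both $W_s$ and the source of this quotient are $\F_\ell$-vector spaces, the induced isomorphism is one of $\F_\ell[G_k]$-modules. So everything reduces to producing a $G_k$-equivariant, $\F_\ell$-linear isomorphism
\[ \mathcal{W}_s^0 / (\langle \mathfrak{R}_s \rangle + \ell\mathcal{W}_s^0) \;\xrightarrow{\ \sim\ }\; (\mathcal{W}_s^0 \otimes \F_\ell)/\langle \overline{\mathfrak{R}}_s \rangle. \]

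To construct it, first I would take the reduction map $q \colon \mathcal{W}_s^0 \to \mathcal{W}_s^0 \otimes \F_\ell$, $E \mapsto \overline{E}$, which is $G_k$-equivariant and surjective with kernel $\ell\mathcal{W}_s^0$. Postcomposing $q$ with the projection onto $(\mathcal{W}_s^0 \otimes \F_\ell)/\langle \overline{\mathfrak{R}}_s \rangle$ yields a surjective $G_k$-map whose kernel is $q^{-1}(\langle \overline{\mathfrak{R}}_s \rangle)$. Since $q(\mathfrak{R}_s) = \overline{\mathfrak{R}}_s$ and $\langle \overline{\mathfrak{R}}_s \rangle = \F_\ell\, \overline{\mathfrak{R}}_s = q(\langle \mathfrak{R}_s \rangle)$, this preimage is exactly $\langle \mathfrak{R}_s \rangle + \ker q = \langle \mathfrak{R}_s \rangle + \ell\mathcal{W}_s^0$. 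The first isomorphism theorem then delivers the displayed map; it is $\F_\ell$-linear (both sides are $\F_\ell$-vector spaces and the map is additive) and $G_k$-equivariant by construction. Splicing this with \eqref{eqn:alpha_s_seq} gives $W_s \cong (\mathcal{W}_s^0 \otimes \F_\ell)/\langle \overline{\mathfrak{R}}_s \rangle$ as $\F_\ell[G_k]$-modules.

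Two bookkeeping points make the target a legitimate $\F_\ell[G_k]$-module quotient: by Lemma \ref{lemma:W0_basis}, $\mathcal{W}_s^0$ is a $G_k$-stable subgroup of $\Div^0 C_s$ with torsion-free quotient, so $\mathcal{W}_s^0 \otimes \F_\ell$ is, as recalled at the start of this section, naturally an $\F_\ell[G_k]$-submodule of $\Div C_s \otimes \F_\ell$; and by Lemma \ref{lemma:Rsbar} the element $\overline{\mathfrak{R}}_s$ is $G_k$-fixed, so $\langle \overline{\mathfrak{R}}_s \rangle$ is an $\F_\ell[G_k]$-submodule. I do not expect any genuine obstacle: the real content is the exact sequence \eqref{eqn:alpha_s_seq} established in the previous section, and what remains is third-isomorphism-theorem bookkeeping together with tracking $G_k$-equivariance through the canonical identification $\mathcal{W}_s^0/\ell\mathcal{W}_s^0 \cong \mathcal{W}_s^0 \otimes \F_\ell$.
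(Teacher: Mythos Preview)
Your proposal is correct and follows essentially the same route as the paper: both arguments extract the isomorphism from the exact sequence \eqref{eqn:alpha_s_seq} by reducing modulo $\ell$. The paper phrases this as applying the functor $-\otimes\F_\ell$ and noting right exactness, whereas you unpack the same computation via the reduction map $q$ and the first isomorphism theorem; your version is slightly more explicit about why the kernel of the induced map is exactly $\langle\overline{\mathfrak{R}}_s\rangle$, but there is no substantive difference.
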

\begin{proof}
We apply the functor $- \otimes \F_\ell$ to \eqref{eqn:alpha_s_seq} to obtain
\[ \begin{tikzcd}[column sep = large]
\langle \overline{\mathfrak{R}}_s \rangle \ar{r}{\iota \otimes 1} & \mathcal{W}^0_s \otimes \F_\ell \ar{r}{\alpha_s \otimes 1} & W_s \ar{r} & 0.
\end{tikzcd} \]
(Since $W_s$ is killed by $\ell$, $W_s \cong W_s \otimes \F_\ell$.) Moreover, $\iota \otimes 1$ is injective upon inspection, and so the resulting sequence is exact.
\end{proof}
Since $\{ \overline{D}_{s,\xi} : \xi \in S[\less s] - \{\xi_0\} \}$ gives an $\F_\ell$-basis for $\mathcal{W}_s^0 \otimes \F_\ell$, and $\overline{\mathfrak{R}}_s \neq 0$ by Lemma \ref{lemma:Rsbar}, we conclude:
\begin{corollary}
$\dim_{\F_\ell} W_s = \#S[\less s] - 2$.
\end{corollary}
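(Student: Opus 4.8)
The plan is to read off the dimension directly from the $\F_\ell[G_k]$-module description of $W_s$ furnished by Lemma~\ref{lemma:Ws_FlGk}, namely $W_s \cong (\mathcal{W}_s^0 \otimes \F_\ell)/\langle \overline{\mathfrak{R}}_s \rangle$; the only work is to pin down the dimensions of the two objects on the right, after which the count is immediate.

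First I would invoke Lemma~\ref{lemma:W0_basis}: since $\mathcal{W}_s^0$ is free over $\Z$ on $\{D_{s,\xi} : \xi \in S[\less s] - \{\xi_0\}\}$, applying $-\otimes\F_\ell$ shows that $\mathcal{W}_s^0 \otimes \F_\ell$ is an $\F_\ell$-vector space with basis $\{\overline{D}_{s,\xi} : \xi \in S[\less s] - \{\xi_0\}\}$, hence of dimension $\#S[\less s] - 1$ (note $\xi_0 \in S[0] \subseteq S[\less s]$, as $s \geq 1$).

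Next I would check that $\overline{\mathfrak{R}}_s$ is a \emph{nonzero} vector, so that quotienting by the $\F_\ell$-line it spans lowers the dimension by exactly one. Expanding \eqref{eqn:Rs_defn} and using $D_{s,\xi_0}=0$ gives $\overline{\mathfrak{R}}_s = \sum_{\xi \in S[\less s]-\{\xi_0\}} u_\xi \overline{D}_{s,\xi}$. By Lemma~\ref{lemma:S0-nonempty} we have $\#S[0] \geq 2$ and $S[0]\subseteq S[\less s]$, so there is some $\xi_1 \in S[\less s]-\{\xi_0\}$; for such a point $u_{\xi_1}=n_{\xi_1}$ is prime to $\ell$ by the definition of $S[0]$ (equivalently, this is recorded in Lemma~\ref{lemma:Rsbar}). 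Since the $\overline{D}_{s,\xi}$ are linearly independent, this nonzero coefficient forces $\overline{\mathfrak{R}}_s \neq 0$, and therefore $\dim_{\F_\ell}\langle\overline{\mathfrak{R}}_s\rangle = 1$. Combining, $\dim_{\F_\ell} W_s = (\#S[\less s]-1) - 1 = \#S[\less s]-2$.

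The one place meriting any care is the non-vanishing of $\overline{\mathfrak{R}}_s$: this is exactly where the sharpened estimate $\#S[0] \geq 2$ of Lemma~\ref{lemma:S0-nonempty} (rather than mere nonemptiness of $S[0]$) is needed, and it is also what makes the formula behave correctly in the degenerate case $\#S[\less s] = 2$, where it yields $W_s = 0$.
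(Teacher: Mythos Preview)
Your proof is correct and follows essentially the same route as the paper: use Lemma~\ref{lemma:Ws_FlGk} together with the $\F_\ell$-basis $\{\overline{D}_{s,\xi} : \xi \in S[\less s]-\{\xi_0\}\}$ from Lemma~\ref{lemma:W0_basis}, and the nonvanishing of $\overline{\mathfrak{R}}_s$ (which the paper cites from Lemma~\ref{lemma:Rsbar}, while you reprove it directly from $\#S[0]\geq 2$). The only minor remark is that your closing claim that the \emph{sharpened} bound $\#S[0]\geq 2$ is strictly ``needed'' is a slight overstatement---any $\xi\in S[\less s]-\{\xi_0\}$ furnishes a nonzero coefficient, so $\#S[\less s]\geq 2$ suffices---but this does not affect the validity of your argument.
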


\section{Galois action on $A_s[\ell]$}

In this section, we demonstrate that $W_s$ lies in a chain of $G_k$-stable subspaces of $A_s[\ell]$, and use this to give an explicit description of the mod $\ell$ Galois representation on $A_s$.

For any $1 \leq s \leq n$, let $\Delta_s := \Gal \left( \bar{k}(C_s)/\bar{k}(C_0) \right)$. This is a cyclic group of order $\ell^s$; we let $\delta_s$ denote a fixed generator for this group.
\begin{lemma}\label{lemma:action_of_delta_s}
Suppose $\xi \in S[j]$, where $j < s$. Then $D_{s,\xi}$ is $\Delta_s$-stable.
\end{lemma}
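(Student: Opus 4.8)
The plan is to exploit that $\Delta_s$ acts on $\bar C_s$ by deck transformations over $\bar C_0 = \P^1$, hence permutes every fibre of $\Pi_s$, so that each reduced fibre divisor $[\xi]_s$ is automatically $\Delta_s$-invariant; since $D_{s,\xi} = [\xi]_s - \ell^j[\xi_0]_s$ is a $\Z$-linear combination of two such divisors, the claim will then follow at once.

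First I would recall that, after base change to $\bar k$ (which contains $\bmu_{\ell^s}$), the extension $\bar k(C_s)/\bar k(C_0)$ is cyclic Galois of degree $\ell^s$; this is precisely the assertion $\Delta_s \cong \Z/\ell^s\Z$ recorded just above the lemma, and it matches the identification with $\Gamma_s = \Gal(\bar C_s/\bar C_0)$ from \S\ref{sec:S}. Correspondingly, $\Delta_s$ acts on the smooth projective curve $\bar C_s$ as a group of $\bar k$-automorphisms commuting with $\Pi_s$, and therefore carries each geometric fibre $\Pi_s^{-1}(\xi)$ onto itself for every $\xi \in \P^1(\bar k)$.

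Next, since $[\xi]_s = \sum_{\eta \in \Pi_s^{-1}(\xi)} \eta$ assigns coefficient $1$ to each point of that fibre, and $\delta_s$ merely permutes those points, we get $\delta_s \cdot [\xi]_s = [\xi]_s$; that is, \emph{every} reduced fibre divisor is $\Delta_s$-stable. Applying this both to $\xi$ — which is legitimate since $\xi \in S[j]$ with $j < s$ ensures that $D_{s,\xi} \in \mathcal{W}_s^0$ is defined — and to $\xi_0 \in S[0]$, and recalling $D_{s,\xi} = [\xi]_s - \ell^j[\xi_0]_s$, we conclude that $D_{s,\xi}$ is fixed by $\delta_s$, hence is $\Delta_s$-stable.

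There is no serious obstacle here. The only points that deserve a line of care are the identification of the abstract function-field Galois group $\Delta_s$ with the group of deck transformations acting on the geometric fibres of $\Pi_s$, and the observation that the hypothesis $j < s$ is used only to guarantee $D_{s,\xi} \in \mathcal{W}_s^0$ and is otherwise irrelevant to the $\Delta_s$-invariance itself.
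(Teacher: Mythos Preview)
Your proof is correct and follows essentially the same route as the paper: both argue that $\Delta_s$ permutes each fibre $\Pi_s^{-1}(\xi)$, hence fixes the reduced fibre divisors $[\xi]_s$ and $[\xi_0]_s$, and therefore fixes their $\Z$-linear combination $D_{s,\xi} = [\xi]_s - \ell^j[\xi_0]_s$. Your additional remarks about the identification of $\Delta_s$ with the deck-transformation group and about the role of the hypothesis $j < s$ are accurate elaborations, but the core argument is identical.
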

\begin{proof}
Recall that $D_{s,\xi} = [\xi]_s - \ell^j[\xi_0]_s$, and $[\xi]_s = \sum \eta$, where the sum runs over the points in the fiber $\Pi_s^{-1}(\xi)$. Thus, $[\xi]_s$ and $[\xi_0]_s$ are $\Delta_s$-stable, and  $D_{s,\xi}$ is also.
\end{proof}

Let us fix $\zeta_s \in \bmu_{\ell^s}$, a primitive $\ell^s$th root of unity.  Set
\[ \Phi_s := \frac{X^{\ell^s} - 1}{X^{\ell^{s-1}}-1} = \sum_{j=0}^{\ell-1} X^{j \cdot \ell^{s-1}} \in \Z[X], \]
the $\ell^s$th cyclotomic polynomial. Observe that as rings
\[ \frac{\Z[\Delta_s]}{(\Phi_s(\delta_s))} \cong \Z[\zeta_s], \qquad \text{ via } \quad \delta_s \mapsto \zeta_s. \]
The inclusion $\Delta_s \hookrightarrow \Aut C_s$ induces a $G_k$-equivariant ring homomorphism $\Z[\Delta_s] \to \End J_s$.
\begin{lemma}
Viewing $\Phi_s(\delta_s)$ as an endomorphism of $J_s$, $\Phi_s(\delta_s)(J_s) \subseteq J_{s-1}$.
\end{lemma}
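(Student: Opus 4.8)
The plan is to recognize $\Phi_s(\delta_s)$, regarded inside the group ring $\Z[\Delta_s]$, as the norm (trace) element of a suitable subgroup, and then to identify that element, acting on $J_s$, with the composite $\pi_s^*\circ\pi_{s,*}$.

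First I would note that $\Delta_s\cong\Z/\ell^s\Z$ has a unique subgroup $H$ of order $\ell$, namely $H=\langle\delta_s^{\ell^{s-1}}\rangle$, and that $H=\Gal(\bar k(C_s)/\bar k(C_{s-1}))$: indeed $\bar k(C_{s-1})=\bar k(x,y_s^\ell)$ has index $\ell$ in $\bar k(C_s)=\bar k(x,y_s)$ (each $C_s$ is geometrically irreducible, being dominated by $C=C_n$), so the subgroup of the cyclic group $\Delta_s$ fixing it is precisely the order-$\ell$ subgroup $H$. In particular $\pi_s\colon\bar C_s\to\bar C_{s-1}$ is a Galois covering with group $H$. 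From the definition of $\Phi_s$,
\[
\Phi_s(\delta_s)=\sum_{j=0}^{\ell-1}\delta_s^{j\ell^{s-1}}=\sum_{h\in H}h \quad\text{in }\Z[\Delta_s],
\]
so, under the ring homomorphism $\Z[\Delta_s]\to\End J_s$ just introduced, $\Phi_s(\delta_s)$ acts on $J_s$ as $\sum_{h\in H}h$.

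Next I would invoke the standard identity for a finite Galois covering $\pi\colon Y\to X$ of smooth proper curves with group $H$: on $\Jac(Y)$ one has $\pi^*\circ\pi_*=\sum_{h\in H}h$ (while $\pi_*\circ\pi^*=[\deg\pi]$ on $\Jac(X)$). This is checked on divisors: for $y\in Y$ above $x\in X$, $\pi^*\pi_*(y)=\pi^*(x)=\sum_{h\in H}h(y)$ --- when $y$ is unramified the fibre is the free $H$-orbit of $y$, and when $y$ is ramified it is totally ramified, fixed by all of $H$, with $\pi^*(x)=\ell\,y$ --- and the identity descends to divisor classes. Applying this with $\pi=\pi_s$ and $H=\Gal(\bar C_s/\bar C_{s-1})$ gives $\sum_{h\in H}h=\pi_s^*\circ\pi_{s,*}$ as an endomorphism of $J_s$; the two possible conventions for the action of $\Delta_s$ on $J_s$ (covariant versus contravariant) give the same element here, since $h_*=(h^{-1})^*$ and $h\mapsto h^{-1}$ permutes $H$.

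Putting these together, $\Phi_s(\delta_s)(J_s)=\pi_s^*\bigl(\pi_{s,*}(J_s)\bigr)\subseteq\pi_s^*(J_{s-1})=J_{s-1}$, where we use the identification of $J_{s-1}$ with the image of the closed immersion $\pi_s^*\colon J_{s-1}\hookrightarrow J_s$ established before Proposition~\ref{prop:closed_immersion}. The only point requiring any care is the curve-theoretic formula $\pi_s^*\pi_{s,*}=\sum_{h\in H}h$ together with keeping the push/pull conventions consistent; I do not anticipate a genuine obstacle.
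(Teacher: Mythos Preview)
Your proposal is correct and follows essentially the same route as the paper: identify $\Phi_s(\delta_s)$ with the group-ring norm element $\sum_{h\in H}h$ for $H=\Gal(\bar C_s/\bar C_{s-1})$, recognize this as $\pi_s^*\circ\pi_{s,*}$ via a divisor check, and conclude using that $\pi_s^*$ is a closed immersion. Your treatment is slightly more explicit (separating the ramified and unramified cases and noting that the covariant/contravariant conventions agree here), but the argument is the same.
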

\begin{proof}
Observe that the subgroup $\Gamma_s := \Gal \left( \bar{k}(C_s)/\bar{k}(C_{s-1}) \right) \leq \Delta_s$ is cyclic of order $\ell$, generated by $\delta_s^{\ell^{s-1}}$. Thus,
\[ \Phi_s(\delta_s) = \sum_{j=0}^{\ell-1} \left( \delta_s^{\ell^{s-1}} \right)^j = \sum_{\sigma \in \Gamma_s} \sigma. \]
For any point $z \in C_s$, we have the following equality of divisors:
\[ \Phi_s(\delta_s)(z) = \sum_{\sigma \in \Gamma_s} z^\sigma = \pi_s^* \pi_{s,*}(z). \]
However, $\pi_s^* \colon J_{s-1} \to J_s$ is a closed immersion and so the result follows.
\end{proof}
Consequently, $\Phi_s(\delta_s)$ acts trivially on $A_s$, and so the action of $\Z[\Delta_s]$ on $J_s$ induces a well-defined action of $\Z[\Delta_s]/(\Phi_s(\delta_s))$ on $A_s$. Let $T_s := T_\ell(A_s)$, the $\ell$-adic Tate module of $A_s$. The ring
\[ \Z_\ell[\zeta_s] := \Z[\zeta_s] \otimes_\Z \Z_\ell \]
inherits an action on $T_s$ from the isomorphism $\Z[\zeta_s] \cong \Z[\Delta_s]/(\Phi_s(\delta_s))$. Recall that $\Z_\ell[\zeta_s]$ is a discrete valuation ring whose maximal ideal $\mathfrak{m}$ is generated by the element $\varepsilon := \zeta_s - 1$. In this ring, the principal ideal generated by $\ell$ is totally ramified:
\[ \ell \Z_\ell[\zeta_s] = (\varepsilon \Z_\ell[\zeta_s])^{ \varphi(\ell^s) }, \]
where $\varphi$ denotes Euler's totient function.

It is well-known that $T_s$ is a free $\Z_\ell[\zeta_s]$-module of finite rank; we set $m := \rank_{\Z_\ell[\zeta_s]} T_s$. Since $\rank_{\Z_\ell} \Z_\ell[\zeta_s] = \varphi(\ell^s)$, from Lemma \ref{lemma:hs} we conclude
\[ m = -2 + \sum_{j=0}^{s-1} r_j = \dim_{\F_\ell} W_s. \]
As $A_s[\ell] \cong T_s \otimes_{\Z_\ell} \F_\ell$, we obtain:
\begin{corollary}
The $(\Z_\ell[\zeta_s] \otimes_\Z \F_\ell)$-module $A_s[\ell]$ is free of rank $m$.
\end{corollary}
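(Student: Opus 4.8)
The plan is to deduce the corollary formally from two facts already in hand: that the Tate module $T_s = T_\ell(A_s)$ is free of rank $m$ over $R := \Z_\ell[\zeta_s]$, and that $A_s[\ell] \cong T_s \otimes_{\Z_\ell} \F_\ell$. The whole content is that reduction modulo $\ell$ takes free modules to free modules of the same rank, so I expect no genuine obstacle beyond carefully tracking the module structures.

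First I would identify the coefficient ring in the statement: since $\F_\ell = \Z/\ell\Z$, we have $R \otimes_\Z \F_\ell = R/\ell R$, the (Artinian local) quotient of the discrete valuation ring $R$. By the totally ramified factorization $\ell R = (\varepsilon)^{\varphi(\ell^s)}$ already recorded, this is in fact $\F_\ell[\varepsilon]/(\varepsilon^{\varphi(\ell^s)})$, although the explicit shape will not be needed. Next, fixing an $R$-module isomorphism $T_s \cong R^{\oplus m}$ (using freeness of rank $m$) and applying $-\otimes_{\Z_\ell}\F_\ell = -\otimes_{\Z_\ell}(\Z_\ell/\ell\Z_\ell)$, i.e. reducing modulo $\ell$, yields
\[ T_s \otimes_{\Z_\ell} \F_\ell \;\cong\; R^{\oplus m}\otimes_{\Z_\ell}\F_\ell \;\cong\; (R/\ell R)^{\oplus m} \]
as $R/\ell R$-modules, since base change of a finite free module along the ring map $R \to R/\ell R$ is finite free of the same rank.

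Finally I would combine this with the isomorphism $A_s[\ell] \cong T_s \otimes_{\Z_\ell}\F_\ell$. The one point requiring (minor) care is that this isomorphism is $R/\ell R$-linear: the $R$-structures on both $T_\ell(A_s)$ and $A_s[\ell]$ are induced by the same action of $\Z[\Delta_s]/(\Phi_s(\delta_s)) \cong \Z[\zeta_s]$ inside $\End(A_s)$, extended $\Z_\ell$-linearly, and the canonical isomorphism $T_\ell(A_s)/\ell T_\ell(A_s) \xrightarrow{\sim} A_s[\ell]$ is functorial in the abelian variety, hence $\End(A_s)$-equivariant, hence $R/\ell R$-linear. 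Therefore $A_s[\ell] \cong (R/\ell R)^{\oplus m}$ as $R/\ell R$-modules; that is, $A_s[\ell]$ is free of rank $m$ over $\Z_\ell[\zeta_s] \otimes_\Z \F_\ell$, as claimed.
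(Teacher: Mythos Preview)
Your argument is correct and is exactly the approach the paper takes: the corollary is stated immediately after recording that $T_s$ is free of rank $m$ over $\Z_\ell[\zeta_s]$ and that $A_s[\ell]\cong T_s\otimes_{\Z_\ell}\F_\ell$, with no further proof given. Your write-up simply makes explicit the base-change step and the $R/\ell R$-linearity of the identification, which the paper leaves implicit.
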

We now consider the following chain of ideals in $\Z_\ell[\zeta_s]$ generated by the powers of $\varepsilon$:
\begin{equation}\label{eqn:chain}
(\ell) = \left( \varepsilon^{\varphi(\ell^s)} \right) \subseteq \left( \varepsilon^{\varphi(\ell^s)-1} \right) \subseteq \cdots \subseteq (\varepsilon) \subseteq \left( \varepsilon^0 \right) = \Z_\ell[\zeta_s].
\end{equation}
Notice that for any integer $0 \leq i < \varphi(\ell^s)$, we have isomorphisms of $\F_\ell$-modules:
\[ \F_\ell \cong \frac{\Z_\ell[\zeta_s]}{\mathfrak{m}} \cong \frac{ (\varepsilon^i) }{ (\varepsilon^{i+1}) }, \]
where the second isomorphism is given by $1 \mapsto \varepsilon^i$.

For any $\Z_\ell[\zeta_s] \otimes_\Z \F_\ell$-module $V$ and any $\beta \in \Z_\ell[\zeta_s]$, we let $V[\beta]$ denote the submodule killed by multiplication-by-$\beta$.
\begin{lemma}\label{lemma:Vepsilon}
Let $V$ be a finite free $\Z_\ell[\zeta_s] \otimes_\Z \F_\ell$-module, and let $a, b$ be nonnegative integers satisfying $a + b = \varphi(\ell^s)$. Then
\begin{enumerate}[label=(\alph*)]
\item $\varepsilon^a V = V[\varepsilon^b]$.
\item For $b > 0$, the homomorphism
\[ \varepsilon^a V \longrightarrow \varepsilon^{\varphi(\ell^s)-1}V = V[\varepsilon], \qquad \qquad v \mapsto \varepsilon^{b-1} v, \]
induces an isomorphism $\varepsilon^a V / \varepsilon^{a+1} V \cong V[\varepsilon]$.
\item As $\F_\ell$-vector spaces, $\dim_{\F_\ell} V = \varphi(\ell^s) \cdot \dim_{\F_\ell} V[\varepsilon]$.
\end{enumerate}
\end{lemma}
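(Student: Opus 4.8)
The plan is to reduce the entire statement to elementary facts about the base ring $R := \Z_\ell[\zeta_s] \otimes_\Z \F_\ell$. Since $\Z_\ell \otimes_\Z \F_\ell = \F_\ell$, this ring is just the quotient $\Z_\ell[\zeta_s]/\ell\,\Z_\ell[\zeta_s]$; writing $e := \varphi(\ell^s)$ and $\mathfrak{m} = (\varepsilon)$ for the maximal ideal of the discrete valuation ring $\Z_\ell[\zeta_s]$, the total ramification relation $\ell\,\Z_\ell[\zeta_s] = \mathfrak{m}^e$ identifies $R$ with $\Z_\ell[\zeta_s]/\mathfrak{m}^e$. This is a local Artinian ring, and from the valuation one reads off the two facts that drive everything: (i) the ideals of $R$ are exactly $\varepsilon^i R = \mathfrak{m}^i/\mathfrak{m}^e$ for $0 \le i \le e$, with $\varepsilon^{e-1} \ne 0 = \varepsilon^e$ in $R$; and (ii) for $0 \le j \le e$ the annihilator of $\varepsilon^j$ in $R$ is $R[\varepsilon^j] = \mathfrak{m}^{e-j}/\mathfrak{m}^e = \varepsilon^{e-j} R$. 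The freeness hypothesis on $V$ is precisely what lets us transfer such identities from $R$ to $V$: choosing an $R$-basis $v_1, \dots, v_m$, one has $\varepsilon^a V = \bigoplus_i \varepsilon^a R \cdot v_i$ and $V[\beta] = \bigoplus_i R[\beta]\cdot v_i$ for any $\beta$, simply by comparing coordinates.

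Granting this, (a) is immediate: $\varepsilon^a V \subseteq V[\varepsilon^b]$ because $\varepsilon^b(\varepsilon^a v) = \varepsilon^e v = 0$, and conversely if $\varepsilon^b v = 0$ then each coordinate of $v$ lies in $R[\varepsilon^b] = \varepsilon^a R$ by (ii), so $v \in \varepsilon^a V$; equivalently $\varepsilon^a V = \bigoplus_i \varepsilon^a R v_i = \bigoplus_i R[\varepsilon^b] v_i = V[\varepsilon^b]$.

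For (b), apply (a) with $(a,b) = (e-1,1)$ to get $\varepsilon^{e-1}V = V[\varepsilon]$, so the map $v \mapsto \varepsilon^{b-1}v$ does land in $V[\varepsilon]$, and it carries $\varepsilon^a V$ onto $\varepsilon^{a+b-1}V = \varepsilon^{e-1}V = V[\varepsilon]$, hence is surjective. Restricted to $\varepsilon^a V$ its kernel is $\varepsilon^a V \cap V[\varepsilon^{b-1}]$; since $b>0$, part (a) gives $V[\varepsilon^{b-1}] = \varepsilon^{e-b+1}V = \varepsilon^{a+1}V \subseteq \varepsilon^a V$, so the kernel is exactly $\varepsilon^{a+1}V$ and the induced map $\varepsilon^a V/\varepsilon^{a+1}V \to V[\varepsilon]$ is an isomorphism. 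Part (c) then follows by telescoping the filtration $V = \varepsilon^0 V \supseteq \varepsilon^1 V \supseteq \cdots \supseteq \varepsilon^{e-1}V \supseteq \varepsilon^e V = 0$: for each $0 \le a \le e-1$ the complement $b = e-a$ is positive, so (b) gives $\varepsilon^a V/\varepsilon^{a+1}V \cong V[\varepsilon]$, and adding $\F_\ell$-dimensions yields $\dim_{\F_\ell}V = e\cdot\dim_{\F_\ell}V[\varepsilon] = \varphi(\ell^s)\cdot\dim_{\F_\ell}V[\varepsilon]$.

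The only step that is not pure formalism is the reverse inclusion in (a) — the single place the freeness of $V$ is genuinely used — and I expect it to be the crux, though it is still routine once one works with a chosen basis (or reduces to the rank-one case $V \cong R$ and takes direct sums). One could alternatively deduce (c) directly from the dimension counts $\dim_{\F_\ell}V = e\cdot\rank_R V$ and $\dim_{\F_\ell}V[\varepsilon] = \rank_R V$, but deriving it from (b) keeps the three parts of the lemma uniform.
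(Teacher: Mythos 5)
Your proposal is correct and follows essentially the same route as the paper, which simply declares (a) immediate, notes that the map in (b) is surjective with kernel $\varepsilon^{a+1}V$, and telescopes the filtration for (c); you have filled in the standard details by identifying $\Z_\ell[\zeta_s]\otimes_\Z\F_\ell$ with the local Artinian ring $\Z_\ell[\zeta_s]/\mathfrak{m}^{\varphi(\ell^s)}$ and transferring its ideal/annihilator structure to $V$ via a basis. Nothing further is needed.
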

\begin{proof}
Statement (a) is immediate. Statement (b) follows from noting the given map is surjective with kernel $\varepsilon^{a+1}V$. Now (c) follows from (b) when we consider the filtration on $V$ induced by \eqref{eqn:chain}:
\[ 0 \leq \varepsilon^{\varphi(\ell^s)-1} V \leq \cdots \leq \varepsilon^2 V \leq \varepsilon V \leq V. \qedhere \]
\end{proof}
\begin{proposition}
$W_s = A_s[\varepsilon]$.
\end{proposition}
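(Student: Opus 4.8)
The plan is to prove the inclusion $W_s \subseteq A_s[\varepsilon]$ directly, and then to upgrade it to an equality by comparing $\F_\ell$-dimensions; the two spaces turn out to have the same dimension, so a single containment suffices.

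For the dimension count I would recall that $m := -2 + \sum_{j=0}^{s-1} r_j = \#S[\less s] - 2$ equals both $\dim_{\F_\ell} W_s$ (the Corollary of the previous section) and $\rank_{\Z_\ell[\zeta_s]} T_s$, so that $A_s[\ell]$ is free of rank $m$ over $R := \Z_\ell[\zeta_s]\otimes_\Z\F_\ell$. Since $R$ is a local $\F_\ell$-algebra with $\dim_{\F_\ell} R = \varphi(\ell^s)$, we get $\dim_{\F_\ell} A_s[\ell] = m\,\varphi(\ell^s)$, and applying Lemma \ref{lemma:Vepsilon}(c) to $V = A_s[\ell]$ yields $\dim_{\F_\ell} A_s[\varepsilon] = \dim_{\F_\ell} A_s[\ell]/\varphi(\ell^s) = m = \dim_{\F_\ell} W_s$. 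Hence it is enough to establish one of the two inclusions.

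To show $W_s \subseteq A_s[\varepsilon]$ I would argue that $\varepsilon$ annihilates $W_s$. Under the isomorphism $\Z[\Delta_s]/(\Phi_s(\delta_s)) \cong \Z[\zeta_s]$ sending $\delta_s \mapsto \zeta_s$, the endomorphism $\varepsilon = \zeta_s - 1$ of $A_s$ is the image of $\delta_s - 1 \in \Z[\Delta_s]$ under the action $\Z[\Delta_s]\to\End J_s$, which factors through $A_s$. Now $\alpha_s\colon \mathcal{W}_s^0\to A_s$ is the composite of $\mathcal{W}_s^0 \hookrightarrow \Div^0 C_s$ with the canonical surjections onto $J_s$ and then $A_s$; each of these is compatible with the geometric $\Delta_s$-action on divisors and the induced endomorphism action on $J_s$ and $A_s$, so $\alpha_s$ is $\Z[\Delta_s]$-equivariant. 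By Lemma \ref{lemma:action_of_delta_s}, every generator $D_{s,\xi}$ (with $\xi \in S[j]$, $j<s$) of $\mathcal{W}_s^0$ is $\Delta_s$-stable, hence $(\delta_s-1)D_{s,\xi}=0$ in $\Div^0 C_s$ and therefore $\varepsilon\,\Delta_{s,\xi}=\alpha_s\bigl((\delta_s-1)D_{s,\xi}\bigr)=0$. As the $\Delta_{s,\xi}$ span $W_s$ over $\F_\ell$ and $\varepsilon$ acts $\F_\ell$-linearly on $A_s[\ell]$, it follows that $\varepsilon W_s = 0$, i.e. $W_s\subseteq A_s[\varepsilon]$. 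Combined with the equality of dimensions, this gives $W_s = A_s[\varepsilon]$.

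The only real obstacle here is bookkeeping rather than difficulty: one must carefully distinguish the geometric $\Delta_s$-action on divisor classes from the ring action of $\Z[\Delta_s]$ on $J_s$ and its quotient $A_s$, and check that $\alpha_s$ intertwines them correctly, so that ``$D_{s,\xi}$ is $\Delta_s$-fixed'' translates into ``$\varepsilon$ kills $\Delta_{s,\xi}$.'' Once that compatibility is nailed down, the annihilation statement is immediate from Lemma \ref{lemma:action_of_delta_s}, and the proposition follows from the dimension computation assembled above.
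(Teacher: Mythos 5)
Your proposal is correct and follows essentially the same route as the paper: a dimension count via Lemma \ref{lemma:Vepsilon} reducing the equality to the single inclusion $W_s \subseteq A_s[\varepsilon]$, which is then verified on the generators $\Delta_{s,\xi}$ using Lemma \ref{lemma:action_of_delta_s} and the $\Delta_s$-equivariance of $\alpha_s$. The extra care you take in distinguishing the divisor-level $\Delta_s$-action from the induced $\Z[\zeta_s]$-action on $A_s$ is exactly the compatibility the paper uses implicitly.
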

\begin{proof}
Both $W_s$ and $A_s[\varepsilon]$ are $\F_\ell$-subspaces of $A_s[\ell]$. Applying Lemma \ref{lemma:Vepsilon} to the vector space $V=A_s[\ell]$, we find $W_s$ and $A_s[\varepsilon]$ both have dimension $m$. Thus, it will suffice to demonstrate that every generator $\Delta_{s,\xi}$ of $W_s$ lies in $A_s[\varepsilon]$. By Lemma \ref{lemma:action_of_delta_s}, we see $\Delta_{s,\xi}$ is killed by $(\delta_s - 1)$. Consequently,
\[ \varepsilon \cdot \Delta_{s,\xi} = (\zeta_s - 1) \cdot \Delta_{s,\xi} = \alpha_s \bigl( (\delta_s - 1) D_{s,\xi} \bigr) = 0. \]
Thus, $\Delta_{s,\xi} \in A_s[\varepsilon]$, and so $W_s = A_s[\varepsilon]$ as claimed.
\end{proof}

In preparation for the next result, we briefly recall the notion of the Tate twist of a $G_k$-module (see also \cite{Shatz:1986}). Set $\bmu := \bmu(\bar{k})$, the group of all roots of unity of $\bar{k}$. Once and for all, choose $\mathbf{\upzeta} := \{ \zeta_n \}_{n=1}^\infty$, a compatible system of elements $\zeta_n \in \bmu$, where $\zeta_n$ is a primitive $\ell^n$th root of unity and $\zeta_{n+1}^\ell = \zeta_n$ for all $n \geq 1$. The $\ell$-adic Tate module $T_\ell \bmu$ is a free $\Z_\ell$-module of rank one, generated by $\mathbf{\upzeta}$. In fact, $T_\ell \bmu$ is a $G_k$-module, via the action of $\hat{\chi} \colon G_k \longrightarrow \Z_\ell^\times$, the $\ell$-adic cyclotomic character:
\[ \sigma(\mathbf{\upzeta}) := \mathbf{\upzeta}^{\hat{\chi}(\sigma)}, \quad \text{ i.e., } \quad \sigma(\zeta_n) = \zeta_n^{\hat{\chi}(\sigma) \pmod{\ell^n} } \quad (\forall\,n \geq 1). \]
Viewed as a $G_k$-module, $T_\ell \bmu$ is often denoted $\Z_\ell(1)$, and we let $\Z_\ell(-1)$ denote its $\Z_\ell$-dual, i.e. $\Z_\ell(-1) := \Hom_{\Z_\ell}(\Z_\ell(1), \Z_\ell)$.

For any $\Z_\ell[G_k]$-module $M$, and any $n \geq 0$, we define the $n$th Tate twist of $M$ and $(-n)$th Tate twist of $M$, respectively, by
\[ \begin{split}
M(n) & := M \otimes_{\Z_\ell} \Z_\ell(1)^{\otimes n} \\
M(-n) & := M \otimes_{\Z_\ell} \Z_\ell(-1)^{\otimes n}.
\end{split} \]
Suppose $K$ is a field equipped with a ring homomorphism $\Z_\ell \to K$ (e.g., $K = \Q_\ell$ or $K = \F_\ell$). For any $n \in \Z$, we set $K(n) := \Z_\ell(n) \otimes_{\Z_\ell} K$. If $V$ is a $K[G_k]$-module, we define the $n$th Tate twist of $V$ by
\[ V(n) := V \otimes_{K} K(n). \]
If $\dim_{K} V = t$, then the $G_k$-action on $V$ yields a representation
\[ \rho \colon G_k \longrightarrow GL(V) \cong GL_t(K), \]
and we write $\rho(n)$ for the $G_k$-representation attached to the Tate twist $V(n)$.

We return to studying the $G_k$-action on the $\F_\ell$-vector space $A_s[\ell]$. For each $0 \leq i \leq \varphi(\ell^s)$, set $V_i := \varepsilon^i A_s[\ell]$, yielding a chain
\[ \{0 \} = V_{\varphi(\ell^s)} \leq V_{\varphi(\ell^s)-1} \leq \cdots \leq V_1 \leq V_0 = A_s[\ell]. \]
Notice that $V_{\varphi(\ell^s)-1} = W_s$ and so is $G_k$-stable.
\begin{lemma}\label{lemma:Vi_props}
The spaces $V_i$ satisfy the following properties.
\begin{enumerate}[label=(\alph*)]
\item For each $0 \leq i \leq \varphi(\ell^s)$, the subspace $V_i$ is $G_k$-stable.
\item For each $0 \leq i < \varphi(\ell^s)$, $\dim_{\F_\ell} V_i/V_{i+1} = m$.
\item For each $0 \leq i \leq \varphi(\ell^s)-2$, there is an isomorphism of $\F_\ell[G_k]$-modules,
\[ \frac{V_i}{V_{i+1}}(1) \cong \frac{V_{i+1}}{V_{i+2}}. \]
\end{enumerate}
\end{lemma}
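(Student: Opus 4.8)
The plan is to treat the three parts in order, using the fact that $A_s[\ell]$ is a free module over the ring $R := \Z_\ell[\zeta_s] \otimes_\Z \F_\ell$ together with the $G_k$-equivariance of the $\Z[\Delta_s]$-action on $A_s$.

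For part (a), the key observation is that the endomorphism $\delta_s$ of $A_s$ is defined over $k$: the inclusion $\Delta_s \hookrightarrow \Aut C_s$ is defined over $\bar{k}$, but more importantly the cyclic covering structure is compatible with Galois descent, so each $\sigma \in G_k$ conjugates $\delta_s$ to some power $\delta_s^{c(\sigma)}$ with $c(\sigma) \in (\Z/\ell^s\Z)^\times$ — in fact $c(\sigma) = \hat\chi(\sigma)$, the cyclotomic character. Hence $\varepsilon = \zeta_s - 1$ acts on $A_s[\ell]$ by an operator that $G_k$ conjugates into $(\zeta_s^{c(\sigma)} - 1)$-multiplication, i.e. into a unit multiple of $\varepsilon$-multiplication (since $(\zeta_s^c-1)/(\zeta_s-1)$ is a unit in $\Z_\ell[\zeta_s]$ when $\gcd(c,\ell)=1$). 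Therefore $\sigma(\varepsilon^i A_s[\ell]) = \varepsilon^i A_s[\ell]$, proving $V_i$ is $G_k$-stable. Part (b) is then immediate from Lemma~\ref{lemma:Vepsilon}(b) applied to $V = A_s[\ell]$, which gives $V_i/V_{i+1} \cong A_s[\ell][\varepsilon] = W_s$ as $\F_\ell$-vector spaces, of dimension $m$.

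Part (c) is the substantive point. First I would note that the map $v \mapsto \varepsilon v$ induces, for each $i$ with $0 \le i \le \varphi(\ell^s)-2$, an $\F_\ell$-linear isomorphism $V_i/V_{i+1} \xrightarrow{\ \sim\ } V_{i+1}/V_{i+2}$ (surjective by definition of $V_{i+1}$ as $\varepsilon V_i$, and injective by a dimension count using part (b), or directly from Lemma~\ref{lemma:Vepsilon}(b)). So the content is purely about how $G_k$ interacts with this map. Write $\sigma(\varepsilon v) = \varepsilon^{(\sigma)}\,\sigma(v)$ where $\varepsilon^{(\sigma)}$ denotes the operator $\sigma \circ \varepsilon \circ \sigma^{-1}$ on $A_s[\ell]$; by the discussion above $\varepsilon^{(\sigma)} = u_\sigma \cdot \varepsilon$ for a unit $u_\sigma \in R^\times$ whose image in $\F_\ell = R/\mathfrak{m}$ is the reduction of $(\zeta_s^{\hat\chi(\sigma)}-1)/(\zeta_s-1) \equiv \hat\chi(\sigma) \equiv \chi(\sigma) \pmod{\mathfrak m}$. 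Now fix a generator $\mathbf{\upzeta}$ of $\Z_\ell(1)$ and use it to trivialize $\F_\ell(1) \cong \F_\ell$; the point is that under the multiplication map $\mu\colon V_i/V_{i+1} \to V_{i+1}/V_{i+2}$ one computes $\sigma(\mu(\bar v)) = \overline{\varepsilon^{(\sigma)}\sigma(v)} = \chi(\sigma)\,\mu(\sigma(\bar v))$ in the quotients (the higher-order correction terms in $u_\sigma - \chi(\sigma)$ land in $V_{i+2}$, hence vanish mod $V_{i+2}$). This is exactly the statement that $\mu$, regarded as a map $(V_i/V_{i+1})(1) \to V_{i+1}/V_{i+2}$, is $G_k$-equivariant, giving the desired isomorphism of $\F_\ell[G_k]$-modules.

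The main obstacle is bookkeeping the twist correctly: one must (i) pin down that $G_k$ acts on the operator $\delta_s$ through $\hat\chi$ and not merely through some unspecified character $(\Z/\ell^s\Z)^\times$, which requires recalling that the cyclic cover $C_s/C_0$ is a pullback of the Kummer cover $y_s^{\ell^s} = f(x)$ so that a $k$-rational $\sigma$ sends $y_s \mapsto \zeta_s^{?}\, y_s$ in a way governed by its action on $\bmu_{\ell^s}$; and (ii) verify that $(\zeta_s^{c}-1)/(\zeta_s - 1) \equiv c \pmod{\mathfrak m}$ for $c$ prime to $\ell$, an elementary congruence in $\Z_\ell[\zeta_s]$ that identifies the twisting character as $\chi$ rather than, say, $\chi^{-1}$ or a power of it. Once the normalization is fixed the rest is the dimension count plus diagram chasing already prepared by Lemma~\ref{lemma:Vepsilon}.
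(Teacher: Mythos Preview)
Your proposal is correct and follows essentially the same approach as the paper's proof: both arguments rest on the computation that $G_k$-conjugation sends the operator $\varepsilon = \zeta_s - 1$ to $\zeta_s^{\hat\chi(\sigma)} - 1 = \varepsilon(\chi(\sigma) + \varepsilon h(\varepsilon))$, which is a unit multiple of $\varepsilon$ with leading coefficient $\chi(\sigma)$, and then read off parts (a) and (c) from that. The only difference is packaging: the paper phrases it as a semilinear action (writing $\sigma(\varepsilon^i P) = \sigma(\varepsilon)^i P^\sigma$ directly), whereas you phrase it via operator conjugation $\varepsilon^{(\sigma)} = \sigma \circ \varepsilon \circ \sigma^{-1}$; your explicit justification of point (i)---that the conjugation character on $\Delta_s$ is $\hat\chi$ via the Kummer model---is a detail the paper leaves implicit in its statement that $\Z[\Delta_s] \to \End J_s$ is $G_k$-equivariant.
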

Before proving the lemma, we note the following immediate corollary, stemming from the observation that
$W_s = V_{\varphi(\ell^s)-1}$.
\begin{corollary}\label{cor:As_block_diagonal}
Let $\psi_s \colon G_k \to GL_m(\F_\ell)$ be the Galois representation attached to $W_s$. Then the Galois representation attached to $A_s[\ell]$ has the block upper triangular form
\[ \rho_{A_s,\ell} \sim \begin{pmatrix} \psi_s & * & \cdots & * \\ & \psi_s(-1) & \cdots & * \\ & & \ddots & \vdots \\ & & & \psi_s(1-\varphi(\ell^s)) \end{pmatrix}. \]
\end{corollary}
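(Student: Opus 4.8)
The plan is to read the Corollary directly off Lemma~\ref{lemma:Vi_props}. Write $N := \varphi(\ell^s)$ for short and recall the chain of subspaces
\[ \{0\} = V_N \leq V_{N-1} \leq \cdots \leq V_1 \leq V_0 = A_s[\ell], \]
all of whose terms are $G_k$-stable by Lemma~\ref{lemma:Vi_props}(a). I would choose an $\F_\ell$-basis of $A_s[\ell]$ refining this filtration \emph{from the smallest term up}: first a basis of $V_{N-1} = W_s$, then complete it to a basis of $V_{N-2}$, then of $V_{N-3}$, and so on until $V_0$. Because each $V_i$ is $G_k$-stable, in this basis every $\rho_{A_s,\ell}(\sigma)$ is block upper triangular, with the $j$-th diagonal block (for $j = 1, \dots, N$) being the matrix of $\sigma$ acting on the successive quotient $V_{N-j}/V_{N-j+1}$. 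By Lemma~\ref{lemma:Vi_props}(b) every such quotient is $m$-dimensional over $\F_\ell$; in particular the first block, the action on $W_s = V_{N-1}/V_N$, is the representation $\psi_s$ (which therefore does land in $GL_m(\F_\ell)$), and all blocks have size $m \times m$.

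It then remains only to identify the $j$-th diagonal block with $\psi_s(1-j)$, which I would prove by induction on $j$. The base case $j = 1$ is the tautology $V_{N-1}/V_N = W_s$, carrying $\psi_s = \psi_s(0)$. For the step, apply Lemma~\ref{lemma:Vi_props}(c) with $i = N - j$ to get an isomorphism of $\F_\ell[G_k]$-modules
\[ \bigl( V_{N-j}/V_{N-j+1} \bigr)(1) \;\cong\; V_{N-j+1}/V_{N-j+2}. \]
By the inductive hypothesis the right-hand side carries $\psi_s(2 - j)$, so $\bigl(V_{N-j}/V_{N-j+1}\bigr)$ carries $\psi_s(2-j)(-1) = \psi_s(1 - j)$; here I use that the $(1)$-twist and $(-1)$-twist are mutually inverse operations on $\F_\ell[G_k]$-modules, since $\F_\ell(1) \otimes_{\F_\ell} \F_\ell(-1) \cong \F_\ell$ as $G_k$-modules. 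Running $j$ from $1$ to $N$ now exhibits the diagonal blocks, read from top-left to bottom-right, as $\psi_s, \psi_s(-1), \psi_s(-2), \dots, \psi_s(1 - N)$, which is exactly the shape asserted for $\rho_{A_s,\ell}$.

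I do not expect any genuine obstacle here; the content is entirely bookkeeping, matching the paper's description of the Corollary as ``immediate.'' The only two points that require attention are: (i) the ordering convention --- in a block upper triangular matrix the distinguished $G_k$-submodule sits in the top-left block, which is why the filtration must be read starting from $W_s$ at the bottom rather than from $A_s[\ell]$ at the top; and (ii) the direction of the Tate twist --- Lemma~\ref{lemma:Vi_props}(c) reads $(V_i/V_{i+1})(1) \cong V_{i+1}/V_{i+2}$, so each graded piece is the $(1)$-twist of the previous one, and hence passing $j - 1$ steps away from $W_s$ accumulates a $(1 - j)$-fold twist rather than a $(j-1)$-fold one.
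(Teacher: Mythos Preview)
Your proposal is correct and follows exactly the approach the paper intends: the paper states the corollary as ``immediate'' from Lemma~\ref{lemma:Vi_props} together with the observation $W_s = V_{\varphi(\ell^s)-1}$, and you have simply written out that immediate deduction carefully, including the correct bookkeeping for the ordering of the basis and the direction of the Tate twists.
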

\begin{proof}[Proof of Lemma \ref{lemma:Vi_props}]
Part (b) follows easily from Lemma \ref{lemma:Vepsilon}. Note that for any $\sigma \in G_k$ we have
\[ \sigma(\varepsilon) = \sigma(\zeta_s - 1) = \zeta_s^{\chi(\sigma)} - 1 = (\varepsilon + 1)^{\chi(\sigma)} - 1 = \varepsilon( \chi(\sigma) + \varepsilon h(\varepsilon)) \]
for some polynomial $h$ with coefficients in $\Z_\ell$. Thus, $\sigma(\varepsilon) = u \cdot \varepsilon$ for some unit $u \in \Z_\ell[\zeta_s]^\times$.

Any element of $V_i$ has the form $\varepsilon^i P$, where $P \in A_s[\ell]$. Accordingly,
\[ \sigma(\varepsilon^i P) = u^i \varepsilon^i P^\sigma \in V_i, \]
since $A_s[\ell]$ is $G_k$-stable. We deduce every $V_i$ is $G_k$-stable, verifying (a).

To demonstrate part (c), it suffices to show that the following diagram of $\F_\ell$ vector spaces is commutative for any $\sigma \in G_k$:
\[
\begin{tikzcd}[column sep=large]
V_i/V_{i+1} \ar{rr}{\cdot \varepsilon} \ar{d}[swap]{\sigma} & & V_{i+1}/V_{i+2} \ar{d}{\sigma} \\
V_i/V_{i+1} \ar{r}[swap]{\cdot \varepsilon \vphantom{(\chi)}} & V_{i+1}/V_{i+2} \arrow{r}[swap]{\cdot \chi(\sigma)} & V_{i+1}/V_{i+2}
\end{tikzcd}
\]
Indeed, suppose $\varepsilon^i P$ is an element of $V_i$. Then
\[ \begin{split}
\sigma( \varepsilon \cdot \varepsilon^i P) & = \sigma(\varepsilon) \cdot \sigma( \varepsilon^i P) \\
& = \varepsilon( \chi(\sigma) + \varepsilon h(\varepsilon)) \cdot (u^i \varepsilon^i P^\sigma) \\
& = \chi(\sigma) u^i \varepsilon^{i+1} P^\sigma + u^i \varepsilon^{i+2} h(\varepsilon) P^\sigma \\
& \equiv \chi(\sigma) \cdot \varepsilon \cdot \sigma(\varepsilon^i P) \pmod{V_{i+2}}.
\end{split} \]
So $V_{i+1}/V_{i+2}$ is isomorphic to the twist $(V_i/V_{i+1})(1)$, as claimed.
\end{proof}

\begin{proof}[Proof of Theorem \ref{thm:main_theorem}]
Suppose $C/k$ is a geometrically irreducible superelliptic curve of degree $\ell^n$, and suppose $J = \Jac(C)$ has good reduction away from $\ell$. Let $S$ be the branch locus of the covering $C \to \P^1$, and suppose $k(S) \subseteq \ten(k,\ell)$. We want to demonstrate $\rho_{J,\ell}$ is upper triangular with powers of $\chi$ on the diagonal (up to conjugacy). Combining Proposition \ref{prop:J_As_equiv} and Corollary \ref{cor:As_block_diagonal}, it is enough to show $\psi_s$ is of this form for every $s$. It follows from Lemma \ref{lemma:Ws_Gk_stable} that $k(W_s) \subseteq \ten(k,\ell)$. Thus, again appealing to Lemma 3.3 of \cite{Rasmussen-Tamagawa:2017}, we see every $\psi_s$ has the desired form.
\end{proof}

\section{Comparison of $k(W_s)$ and $k(S[\less s])$}

In this section, we describe the circumstances under which $k(W_s)$ and $k(S[\less s])$ may coincide. We continue the notations of the previous sections.
We have the following chain of $\F_\ell[G_k]$-modules:
\[ 0 \leq \langle \overline{\mathfrak{R}}_s \rangle \leq \mathcal{W}_s^0 \otimes \F_\ell \leq \mathcal{W}_s \otimes \F_\ell. \]
Corresponding to this chain, we have a $G_k$-representation attached to $\mathcal{W}_s \otimes \F_\ell$ which is block upper triangular, comprised of three blocks on the diagonal. By definition of $\mathcal{W}^0_s$, $\mathcal{W}^0_s \otimes \F_\ell$ has codimension $1$ inside $\mathcal{W}_s \otimes \F_\ell$. Thus, the final block is $1$-dimensional; likewise, the first block is $1$-dimensional, corresponding to $\langle \overline{\mathfrak{R}}_s \rangle$. In fact, these one-dimensional blocks are both trivial: $\overline{\mathfrak{R}}_s$ is fixed by $G_k$, and the $G_k$-action on $\mathcal{W}_s \otimes \F_\ell$ preserves degree and thus $(\mathcal{W}_s \otimes \F_\ell) / (\mathcal{W}^0_s \otimes \F_\ell)$ has trivial $G_k$-action. By Lemma \ref{lemma:Ws_FlGk}, the intermediate block is in fact given by $\psi_s$.
\begin{lemma}\label{lemma:Hs_lgroup}
For any $1 \leq s \leq n$, we have
\[ k(W_s) \subseteq k(\mathcal{W}^0_s \otimes \F_\ell) \subseteq k(S[\less s]). \]
Moreover, the group $H_s := \Gal \bigl( k(S[\less s])/k(W_s) \bigr)$ is an $\ell$-group.
\end{lemma}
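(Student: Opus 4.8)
The plan is to read everything off the filtration of $\F_\ell[G_k]$-modules
\[ 0 \leq \langle \overline{\mathfrak{R}}_s \rangle \leq \mathcal{W}_s^0 \otimes \F_\ell \leq \mathcal{W}_s \otimes \F_\ell \]
already discussed just before the statement, together with Lemmas \ref{lemma:WsFl} and \ref{lemma:Ws_FlGk}. First I would dispatch the chain of fields. Since $\mathcal{W}_s^0 \otimes \F_\ell$ is a $G_k$-submodule of $\mathcal{W}_s \otimes \F_\ell$, any $\sigma \in G_k$ acting trivially on $\mathcal{W}_s \otimes \F_\ell$ acts trivially on the submodule, so $\Gal(\bar{k}/k(\mathcal{W}_s \otimes \F_\ell)) \subseteq \Gal(\bar{k}/k(\mathcal{W}_s^0 \otimes \F_\ell))$, i.e. $k(\mathcal{W}_s^0 \otimes \F_\ell) \subseteq k(\mathcal{W}_s \otimes \F_\ell) = k(S[\less s])$, the last equality being Lemma \ref{lemma:WsFl}. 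Dually, by Lemma \ref{lemma:Ws_FlGk} the module $W_s$ is a $G_k$-equivariant quotient of $\mathcal{W}_s^0 \otimes \F_\ell$, so any $\sigma$ trivial on $\mathcal{W}_s^0 \otimes \F_\ell$ is trivial on $W_s$, giving $k(W_s) \subseteq k(\mathcal{W}_s^0 \otimes \F_\ell)$. This proves the two containments.

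For the claim that $H_s$ is an $\ell$-group, note first that $H_s$ is finite, since $S[\less s]$ is a finite $G_k$-set and hence $k(S[\less s])/k$ is a finite extension. Its elements are precisely the restrictions to $k(S[\less s])$ of those $\sigma \in G_k$ that fix $W_s$ pointwise. I would then fix an $\F_\ell$-basis of $\mathcal{W}_s \otimes \F_\ell$ adapted to the filtration above; by the paragraph preceding the statement, the associated representation $G_k \to \Aut(\mathcal{W}_s \otimes \F_\ell)$ is block upper triangular with diagonal blocks $1$, $\psi_s$, $1$ (the outer blocks being trivial because $\overline{\mathfrak{R}}_s$ is $G_k$-fixed by Lemma \ref{lemma:Rsbar} and because $G_k$ acts on $(\mathcal{W}_s \otimes \F_\ell)/(\mathcal{W}_s^0 \otimes \F_\ell)$ through the degree map, hence trivially). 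Restricting to the subgroup of $G_k$ that acts trivially on $W_s$ forces the middle diagonal block to become the identity as well; thus every element of $H_s$ is represented by a block upper unitriangular matrix over $\F_\ell$. The group of all such matrices has order a power of $\ell$ (it is contained in the group of upper unitriangular matrices, a Sylow $\ell$-subgroup of $GL_N(\F_\ell)$), so $H_s$, being a subgroup of a finite $\ell$-group, is an $\ell$-group.

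The only step requiring genuine care is verifying that every element of $H_s$ acts trivially on each of the three graded pieces of the filtration: trivially on $\langle \overline{\mathfrak{R}}_s \rangle$ by $G_k$-invariance of $\overline{\mathfrak{R}}_s$, trivially on the middle quotient $W_s$ by the very definition of $H_s$, and trivially on the one-dimensional top quotient by degree preservation. Once these three facts are assembled, the embedding of $H_s$ into a unipotent $\ell$-group is purely formal, so I do not anticipate any serious obstacle beyond bookkeeping with the filtration.
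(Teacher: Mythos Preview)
Your proposal is correct and follows essentially the same approach as the paper: both arguments use the block upper triangular form (with diagonal blocks $1$, $\psi_s$, $1$) of the representation on $\mathcal{W}_s \otimes \F_\ell$ coming from the filtration, then observe that $H_s$ is the image of $\ker \psi_s$, hence lands in the unit upper triangular subgroup of $GL_N(\F_\ell)$. Your write-up is somewhat more explicit in checking the triviality of the outer graded pieces and in justifying the containments via Lemmas \ref{lemma:WsFl} and \ref{lemma:Ws_FlGk}, but the underlying idea is identical.
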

\begin{proof}
The containments are immediate (for the second, apply Lemma \ref{lemma:WsFl}). Let $\psi$ be the $G_k$-representation attached to $\mathcal{W}_s \otimes \F_\ell$. By the preceding discussion, $\psi$ has the form
\[ \mathcal{\psi} \sim \begin{pmatrix} 1 & * & * \\ & \psi_s & * \\ & & 1 \end{pmatrix}, \]
and $\psi_s$ is itself upper triangular by assumption. Thus, the group $H_s$ is given by $\mathcal{\psi}(\ker \psi_s)$, which is unit upper triangular and so must be an $\ell$-group.
\end{proof}
\begin{proposition}\label{prop:kS_less_S0}
For any $1 \leq s \leq n$, $k(S[\less s] - S[0]) \subseteq k(W_s)$.
\end{proposition}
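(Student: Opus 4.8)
The plan is to prove the statement at the level of Galois groups: I will show that every $\sigma \in G_k$ fixing $W_s$ pointwise must fix each point of $S[\less s] - S[0]$, which gives $\Gal(\bar{k}/k(W_s)) \subseteq \Gal(\bar{k}/k(S[\less s] - S[0]))$ and hence the claimed inclusion of fields. If $S[\less s] = S[0]$ there is nothing to prove, so assume there is some $j$ with $1 \leq j \leq s-1$ and $S[j] \neq \varnothing$ (in particular $s \geq 2$), and fix an arbitrary $\xi \in S[j]$.

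First I would translate the hypothesis on $\sigma$ into a relation among divisor classes. By Lemma \ref{lemma:Ws_FlGk}, $\alpha_s \otimes 1$ induces an isomorphism of $\F_\ell[G_k]$-modules $(\mathcal{W}_s^0 \otimes \F_\ell)/\langle \overline{\mathfrak{R}}_s \rangle \cong W_s$, and by Lemma \ref{lemma:Rsbar} the class $\overline{\mathfrak{R}}_s$ is $G_k$-fixed. So if $\sigma$ fixes $W_s$ pointwise, then $\sigma v - v \in \langle \overline{\mathfrak{R}}_s \rangle$ for every $v \in \mathcal{W}_s^0 \otimes \F_\ell$; applying this to the basis element $\overline{D}_{s,\xi}$ produces a scalar $c_\xi \in \F_\ell$ with $\sigma \overline{D}_{s,\xi} = \overline{D}_{s,\xi} + c_\xi \overline{\mathfrak{R}}_s$. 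On the other hand, since $D_{s,\xi} = [\xi]_s - \ell^j [\xi_0]_s$ and $[\,\cdot\,]_s$ is $G_k$-equivariant, $\sigma D_{s,\xi} = [\sigma\xi]_s - \ell^j [\sigma\xi_0]_s$, and reducing modulo $\ell$ (where $\ell^j = 0$ since $j \geq 1$) gives $\sigma \overline{D}_{s,\xi} = \overline{[\sigma\xi]}_s = \overline{D}_{s,\sigma\xi}$, using $j \geq 1$ once more for the last equality (note $\sigma\xi \in S[j]$, so $\overline{D}_{s,\sigma\xi}$ is again a basis element). Hence the relation becomes
\[ \overline{D}_{s,\sigma\xi} - \overline{D}_{s,\xi} = c_\xi\, \overline{\mathfrak{R}}_s \]
as an identity in $\mathcal{W}_s^0 \otimes \F_\ell$, whose $\F_\ell$-basis is $\{\overline{D}_{s,\eta} : \eta \in S[\less s] - \{\xi_0\}\}$.

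The only real content is to extract $\sigma\xi = \xi$ from this identity, and for that I would use the explicit shape of $\overline{\mathfrak{R}}_s$. Substituting $\overline{[\eta]}_s = \overline{D}_{s,\eta}$ for $\eta$ in a positive stratum and $\overline{[\eta]}_s = \overline{D}_{s,\eta} + \overline{[\xi_0]}_s$ for $\eta \in S[0]$ into the formula of Lemma \ref{lemma:Rsbar} yields $\overline{\mathfrak{R}}_s = \sum_{\eta \in S[\less s] - \{\xi_0\}} u_\eta \overline{D}_{s,\eta} + \bigl( \sum_{\eta \in S[0]} u_\eta \bigr) \overline{[\xi_0]}_s$, and the coefficient $\sum_{\eta \in S[0]} u_\eta$ vanishes modulo $\ell$ by the divisibility already used in the proof of Lemma \ref{lemma:Rsbar} (namely $\sum_{j' < s}\sum_{\eta \in S[j']} n_\eta \equiv 0 \pmod{\ell^s}$, while $n_\eta \equiv 0 \pmod{\ell}$ for $\eta$ in a positive stratum). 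Therefore, in the basis above, the coordinate of $\overline{\mathfrak{R}}_s$ at any $\xi_1 \in S[0] - \{\xi_0\}$ equals $u_{\xi_1} = n_{\xi_1} \not\equiv 0 \pmod{\ell}$, and such a $\xi_1$ exists because $\#S[0] \geq 2$ by Lemma \ref{lemma:S0-nonempty}. Comparing the $\xi_1$-coordinates in $\overline{D}_{s,\sigma\xi} - \overline{D}_{s,\xi} = c_\xi \overline{\mathfrak{R}}_s$, the left-hand side has coordinate $0$ there (both $\xi$ and $\sigma\xi$ lie in $S[j]$ with $j \geq 1$, whereas $\xi_1 \in S[0]$), so $c_\xi u_{\xi_1} = 0$, forcing $c_\xi = 0$; then $\overline{D}_{s,\sigma\xi} = \overline{D}_{s,\xi}$, and since these are basis vectors, $\sigma\xi = \xi$. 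As $\xi$ was an arbitrary point of a positive stratum, $\sigma$ fixes $S[\less s] - S[0]$ pointwise, which completes the argument.

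I do not anticipate a serious obstacle: once the relation is set up, the rest is a coordinate comparison in a known basis. The two points deserving care are the reduction modulo $\ell$ that turns $D_{s,\xi}$ into $[\xi]_s$ for positive strata (so that $\sigma$ genuinely permutes basis vectors), and the use of $\#S[0] \geq 2$ to guarantee a basis coordinate lying over $S[0]$ — rather than over a positive stratum — at which $\overline{\mathfrak{R}}_s$ is nonzero while $\overline{D}_{s,\sigma\xi} - \overline{D}_{s,\xi}$ vanishes; it is precisely this coordinate that kills $c_\xi$.
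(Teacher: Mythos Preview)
Your proposal is correct and follows essentially the same route as the paper: reduce to showing that any $\sigma$ fixing $W_s$ pointwise fixes each $\xi$ in a positive stratum, use Lemma~\ref{lemma:Ws_FlGk} to get $\overline{D}_{s,\sigma\xi}-\overline{D}_{s,\xi}\in\langle\overline{\mathfrak{R}}_s\rangle$, and then kill the scalar by comparing coordinates coming from $S[0]$. The only cosmetic differences are that the paper works in the $\overline{[\,\cdot\,]}_s$-basis of $\mathcal{W}_s\otimes\F_\ell$ (using Lemma~\ref{lemma:Rsbar} directly and $S[0]\neq\varnothing$) while you work in the $\overline{D}_{s,\eta}$-basis of $\mathcal{W}_s^0\otimes\F_\ell$ (using $\#S[0]\geq 2$ to pick a $\xi_1\neq\xi_0$); your detour through Lemma~\ref{lemma:Rsbar} to compute the $D$-basis expansion of $\overline{\mathfrak{R}}_s$ is unnecessary, since the definition \eqref{eqn:Rs_defn} already gives $\overline{\mathfrak{R}}_s=\sum_{\eta\neq\xi_0} u_\eta\,\overline{D}_{s,\eta}$ directly.
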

\begin{proof}
Suppose $\sigma \in G_k$ fixes $k(W_s)$ and $\xi \in S[j]$ with $j \neq 0$. Notice that in this case, $\overline{D}_{s,\xi} = \overline{[\xi]}_s$. Moreover, by the $G_k$-equivariance of $[\,\cdot\,]$,
\[ \overline{D}_{s,\sigma(\xi)} = \overline{[\sigma(\xi)]}_s = \sigma \overline{[\xi]}_s = \sigma \overline{D}_{s,\xi}. \]
Thus, as $\sigma$ fixes $W_s$ pointwise, we have
\[ \begin{split}
(\alpha_s \otimes 1)(\overline{[\sigma(\xi)]}_s) & = (\alpha_s \otimes 1)(\overline{D}_{s,\sigma(\xi)}) \\
& = \Delta_{s,\sigma(\xi)} = \sigma \cdot \Delta_{s,\xi} = \Delta_{s,\xi} \\
& = (\alpha_s \otimes 1)(\overline{D}_{s,\xi}) = (\alpha_s \otimes 1)(\overline{[\xi]}_s).
\end{split} \]
So $\overline{[\sigma(\xi)]}_s - \overline{[\xi]}_s \in \ker (\alpha_s \otimes 1) = \langle \overline{\mathfrak{R}}_s \rangle$. If $[\xi]_s \neq [\sigma(\xi)]_s$ in $\mathcal{W}_s$, then it must be the case that $\overline{[\xi]}_s - \overline{[\sigma(\xi)]}_s$ is a nonzero multiple of $\overline{\mathfrak{R}}_s$. Since the divisors $[\eta]_s$, $[\eta']_s$ are disjoint for any distinct $\eta, \eta' \in S[\less s]$, it follows from Lemma \ref{lemma:Rsbar} that $S[\less s] = \{\xi, \sigma(\xi) \} \subseteq S[j]$. This is absurd, since we know $S[0]$ is nonempty. Thus, $[\xi]_s = [\sigma(\xi)]_s$ for every $\xi \in S[\less s] - S[0]$, and so $\sigma$ fixes $k(S[\less s] - S[0])$ as required.
\end{proof}

Let $d_s := [k(S[\less s]) : k(W_s)] = \#H_s$. In the remainder of this section, we prove the following refinement.
\begin{proposition}\label{prop:exceptional_cases}
Suppose $C/k$ satisfies the hypotheses of Theorem \ref{thm:main_theorem}. For any $1 \leq s \leq n$, the extension $k(S[\less s])/k(W_s)$ is Galois and $d_s \leq 4$. Moreover, one of the following must occur:
\begin{enumerate}[label=(\alph*)]
\item $d_s = 1$, i.e., $k(S[\less s]) = k(W_s)$,
\item $\ell = 3$, $S[\less s] = S[0]$, $r_0 = 3$, $s = 1$, and $d_s = 3$,
\item $\ell = 2$, $S[\less s] = S[0]$, $r_0 = 4$, $s \leq 2$, and $d_s = 4$,
\item $\ell = 2$, $S[\less s] = S[0]$, $r_0 = 4$, and $d_s = 2$,
\item $\ell = 2$, $S[\less s] = S[0]$, $r_0 = 2$, $s = 1$, and $d_s = 2$,
\item $\ell = 2$, $S[\less s] \neq S[0]$, $r_0 = 2$, and $d_s = 2$.
\end{enumerate}
\end{proposition}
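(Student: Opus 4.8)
The plan is to analyze $H_s:=\Gal\!\bigl(k(S[\less s])/k(W_s)\bigr)$ via its action on the branch points. Both $k(W_s)$ and $k(S[\less s])$ are Galois over $k$ (fixed fields of kernels of $G_k$-representations), so $k(S[\less s])/k(W_s)$ is automatically Galois and $d_s=\#H_s$. By Lemma~\ref{lemma:Hs_lgroup}, $H_s$ is an $\ell$-group; by Proposition~\ref{prop:kS_less_S0} it fixes $S[\less s]-S[0]$ pointwise, so it acts \emph{faithfully} on the finite set $S[0]$ (an element fixing $S[0]$ fixes all of $S[\less s]$). Thus $H_s$ is an $\ell$-subgroup of the symmetric group on $S[0]$, and the whole statement becomes a constrained classification of such subgroups together with a divisibility bound on $s$.

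The key input is the filtration $0\subseteq\langle\overline{\mathfrak{R}}_s\rangle\subseteq\mathcal{W}^0_s\otimes\F_\ell\subseteq\mathcal{W}_s\otimes\F_\ell$. By Lemma~\ref{lemma:WsFl}, $\mathcal{W}_s\otimes\F_\ell$ is the permutation $\F_\ell[G_k]$-module on $S[\less s]$, with $H_s$ permuting the basis $\{\overline{[\xi]}_s\}$ and fixing those indexed by $S[\less s]-S[0]$; by Lemma~\ref{lemma:Rsbar} the bottom line is $G_k$-fixed, the top quotient (reduced degree) is $G_k$-trivial, and by Lemma~\ref{lemma:Ws_FlGk} and the definition of $H_s$ the middle quotient $W_s$ is $H_s$-trivial. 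Hence every $\sigma\in H_s$ acts on $\mathcal{W}_s\otimes\F_\ell$ as a permutation matrix $P$ conjugate to a unit block-upper-triangular matrix with identity middle block; therefore $(P-1)^3=0$ and $\rank(P-1)\le 2$. Since $H_s$ is an $\ell$-group the cycle lengths of $\sigma$ on $S[0]$ are powers of $\ell$, and a length-$\ell^a$ cycle has permutation matrix of minimal polynomial $(t-1)^{\ell^a}$ over $\F_\ell$, so $\ell^a\le 3$ for every cycle. This already gives $d_s=1$ (case (a)) when $\ell\ge 5$; for $\ell=3$ a product of $c'$ disjoint $3$-cycles has $\rank(P-1)=2c'$, forcing $c'=1$; for $\ell=2$ a product of $c'$ disjoint transpositions has $\rank(P-1)=c'\le 2$, so $H_s$ is elementary abelian and $d_s\in\{1,2,4\}$.

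Next I locate $S[\less s]$ and $r_0$. Whenever a nontrivial $\sigma\in H_s$ does not fix $\mathcal{W}^0_s\otimes\F_\ell$ pointwise, the trivial action on $W_s$ gives $(P-1)(\mathcal{W}^0_s\otimes\F_\ell)\subseteq\langle\overline{\mathfrak{R}}_s\rangle$, and a codimension-one count against $\rank(P-1)$ forces equality; so $\overline{\mathfrak{R}}_s$ lies in the span of the vectors $\overline{[\sigma\xi]}_s-\overline{[\xi]}_s$, hence is supported on the (at most $3$, resp. $4$) points of $S[0]$ moved by $\sigma$. Since every $S[\less s]$-coordinate of $\overline{\mathfrak{R}}_s$ is nonzero (Lemma~\ref{lemma:Rsbar}), this forces $S[\less s]=S[0]$ and $r_0=\#\{\text{moved points}\}$; combining with the group theory of the previous step (and, for $\ell=2$, $r_0=4$, the classical surjection onto $GL(W_s)\cong S_3$ with kernel the normal Klein four-group, which rules out a single transposition in $H_s$ and identifies the admissible order-$4$ group as the \emph{transitive} Klein four-group) one obtains $r_0=3,\ d_s=3$ for $\ell=3$; $r_0=4,\ d_s=4$ for $\ell=2$; and $r_0\in\{2,4\},\ d_s=2$ for $\ell=2$ with $H_s\cong\Z/2$. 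The complementary possibility — some nontrivial $\sigma$ fixing $\mathcal{W}^0_s\otimes\F_\ell$ pointwise — forces $\rank(P-1)\le 1$, hence $\ell=2$ and $r_0=2$ with $\sigma$ the transposition of $S[0]$ (which acts trivially on the degree-zero line in characteristic $2$); this yields cases (e) and (f) according as $S[\less s]=S[0]$ or not, and also subsumes the degenerate case $W_s=0$, where Lemma~\ref{lemma:Hs_lgroup} alone bounds $H_s$. That each displayed $d_s$ is attained (and none larger) then follows from a short analysis of $W_s\cong(\mathcal{W}^0_s\otimes\F_\ell)/\langle\overline{\mathfrak{R}}_s\rangle$, which in the relevant cases is the quotient of the degree-zero subspace of the permutation module on $S[0]$ by the $G_k$-fixed line $\langle\overline{\mathfrak{R}}_s\rangle$; the only non-formal ingredient is again the $S_4\to S_3$ computation. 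In particular $d_s\le 4$ always.

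Finally, the bound on $s$. In cases (b), (c), (e) the group $H_s$ acts transitively on $S[0]$, so $S[0]$ is a single $G_k$-orbit and the multiplicities $n_\xi$, $\xi\in S[0]$, are all equal to one $\ell$-unit $t$; hence $\sum_{\xi\in S[0]}n_\xi=r_0\,t$ has $\ell$-adic valuation exactly $\ord_\ell r_0$. On the other hand $n_\infty-\sum_{\xi\in S[0]}n_\xi=\sum_{j\ge s}\sum_{\xi\in S[j]}n_\xi$ is divisible by $\ell^s$, and $n_\infty\equiv 0\pmod{\ell^n}$ with $s\le n$; reducing modulo $\ell^s$ gives $\ell^s\mid r_0\,t$, i.e. $s\le\ord_\ell r_0$. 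This yields $s=1$ for (b) and (e) and $s\le 2$ for (c). In cases (d) and (f) $H_s$ is generated by a double transposition (resp. acts only through a transposition with $S[\less s]\ne S[0]$), $S[0]$ need not be a single orbit, and no such obstruction appears — consistent with the absence of a bound on $s$. I expect the genuinely delicate part to be the combinatorial classification in the third paragraph: ruling out larger elementary abelian $2$-groups, and deciding precisely when a nontrivial permutation of $S[0]$ can act trivially on $W_s$ (where the $S_4\to S_3$ phenomenon intervenes); the bound on $s$, by contrast, falls out cleanly from $n_\infty\equiv 0\pmod{\ell^n}$.
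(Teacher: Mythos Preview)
Your proof is correct and reaches the same conclusions, but the route is genuinely different from the paper's. The paper argues case by case: for $\ell\neq 2$ it builds the $G_k$-equivariant map $\Psi\colon S[0]\times S[0]-\triangle S[0]\to W_s$, $(\xi,\eta)\mapsto\Delta_{s,\xi}-\Delta_{s,\eta}$, and for $\ell=2$ the symmetrized map $\Psi'\colon\wedge^2 S[0]\to W_s$, $\{\xi,\eta\}\mapsto\Delta_{s,\xi}+\Delta_{s,\eta}$; it then invokes Lemma~\ref{lemma:Gk_inj} to say that $d_s\neq 1$ forces a failure of injectivity, and unwinds that failure through the kernel $\langle\overline{\mathfrak{R}}_s\rangle$ (together with the trivial-stabilizer lemma and $d_s\mid r_0$) to read off $r_0$ and $S[\less s]$. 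You instead work uniformly with the permutation matrix $P$ of $\sigma\in H_s$ on $\mathcal{W}_s\otimes\F_\ell$: the three-step filtration gives $(P-I)^3=0$ and $\rank(P-I)\le 2$, and since an $\ell^a$-cycle has minimal polynomial $(t-1)^{\ell^a}$ over $\F_\ell$, this immediately bounds the cycle structure for every $\ell$ at once; you then recover $r_0$ and $S[\less s]$ from the fact that $\overline{\mathfrak{R}}_s$ lies in the image of $P-I$ (hence is supported on the moved points) whenever $\sigma$ does not fix $\mathcal{W}_s^0\otimes\F_\ell$. Your approach is more uniform in $\ell$ and bypasses the auxiliary set maps and the stabilizer lemma, at the cost of a slightly more delicate linear-algebra bookkeeping (and the $S_4\to S_3$ identification, which the paper avoids via $d_s\mid r_0$). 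The transitivity argument bounding $s$ is the same in both proofs. One expository quibble: your early assertion ``$d_s\in\{1,2,4\}$'' for $\ell=2$ is stated before $r_0$ is pinned down, but your later $S_4\to S_3$ discussion (once $r_0=4$ is known) does justify it, so there is no mathematical gap.
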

We recall some needed constructions. For any set $X$, we let $\triangle X$ denote the image of the diagonal inclusion $X \to X \times X$. There is a natural action of the symmetric group $\mathfrak{S}_2$ on $X \times X - \triangle X$, and we set
\[ {\wedge}^2 X := (X \times X - \triangle X)/\mathfrak{S}_2. \]
If $X$ is a $G$-set for some group $G$, then both $X \times X - \triangle X$ and ${\wedge}^2 X$ inherit the structure of a $G$-set naturally.
\begin{lemma}\label{lemma:Gk_inj}
Let $X$ and $Y$ be $G_k$-sets. Suppose any of the following conditions hold:
\begin{enumerate}[label=(\alph*), leftmargin=*]
\item there exists an injective $G_k$-set map $\Psi_0 \colon X \to Y$,
\item there exists an injective $G_k$-set map $\Psi \colon (X \times X - \triangle X) \to Y$,
\item there exists an injective $G_k$-set map $\Psi' \colon {\wedge}^2 X \to Y$, and ${\#X \neq 2}$.
\end{enumerate}
Then $k(X) \subseteq k(Y)$.
\end{lemma}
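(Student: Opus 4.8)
The plan is to unwind the definition of the fields $k(T)$ for a $G_k$-set $T$ as the fixed field of the kernel of the permutation representation $G_k \to \Aut(T)$, and to reduce each case to the observation that if $\sigma \in G_k$ fixes every element of $Y$ (equivalently, lies in the kernel of the $Y$-representation), then it fixes every element of $X$. Concretely, to prove $k(X) \subseteq k(Y)$ it suffices to show $\ker(G_k \to \Aut(Y)) \subseteq \ker(G_k \to \Aut(X))$, so I would fix $\sigma$ acting trivially on $Y$ and argue in each case that $\sigma$ acts trivially on $X$.

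Case (a) is immediate: if $\Psi_0$ is an injective $G_k$-map and $\sigma$ fixes every element of $Y$, then for $x \in X$ we have $\Psi_0(\sigma x) = \sigma \Psi_0(x) = \Psi_0(x)$, so injectivity gives $\sigma x = x$. For case (b), apply the same reasoning to $\Psi$: if $\sigma$ fixes $Y$ pointwise and $x, x' \in X$ are distinct, then $(x,x') \in X \times X - \triangle X$, and $\Psi(\sigma x, \sigma x') = \sigma \Psi(x,x') = \Psi(x,x')$, so $(\sigma x, \sigma x') = (x,x')$ by injectivity; in particular $\sigma x = x$. (If $\#X \leq 1$ the conclusion is trivial, so we may assume there is some $x' \neq x$.) For case (c), $\sigma$ fixing $Y$ pointwise forces $\sigma$ to fix every element of ${\wedge}^2 X$, i.e. for distinct $x, x'$ we get $\{\sigma x, \sigma x'\} = \{x, x'\}$ as unordered pairs. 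Thus $\sigma$ permutes $X$ preserving the partition into unordered pairs from any fixed $x$; since $\#X \neq 2$, for any $x$ we can choose two distinct elements $x', x''$ both different from $x$ (using $\#X \geq 3$; the cases $\#X \leq 1$ are again trivial), and the constraints $\{\sigma x, \sigma x'\} = \{x, x'\}$ and $\{\sigma x, \sigma x''\} = \{x, x''\}$ together force $\sigma x = x$ (otherwise $\sigma x \in \{x'\} \cap \{x''\} = \varnothing$). Hence $\sigma$ fixes $X$ pointwise.

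The only real subtlety is the exclusion $\#X \neq 2$ in (c): when $\#X = 2$ the set ${\wedge}^2 X$ is a single point with trivial $G_k$-action, so an injective $G_k$-map to $Y$ carries no information about how $G_k$ permutes the two points of $X$, and the conclusion can genuinely fail. Everything else is a routine pointwise-fixing argument, so I expect no serious obstacle beyond being careful with the small-cardinality edge cases, which should simply be dispatched at the start of each case.
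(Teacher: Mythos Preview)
Your proposal is correct and follows essentially the same strategy as the paper: both reduce to showing that any $\sigma \in G_k$ fixing $Y$ pointwise must fix $X$ pointwise. The only difference is cosmetic: in cases (b) and (c) the paper phrases the key step as the injectivity of the natural maps $\Aut(X) \to \Aut(X \times X - \triangle X)$ and $\Aut(X) \to \Aut({\wedge}^2 X)$ (the latter requiring $\#X \neq 2$), whereas you unpack these injectivity statements directly by choosing auxiliary elements $x'$ (and $x''$).
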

\begin{proof}
That condition (a) is sufficient follows from the equivariance of a $G_k$-set map. For condition (b), suppose $\sigma \in G_k$ fixes $Y$ pointwise. By the assumed injectivity of $\Psi$, $\sigma$ fixes every point of $X \times X - \triangle X$. But the obvious map $\Aut(X) \to \Aut(X \times X - \triangle X)$ is injective, and so $\sigma$ fixes $X$ pointwise also. The same argument applies for condition (c), except that the analogous map $\Aut(X) \to \Aut({\wedge}^2 X)$ is injective only if $\#X \neq 2$.
\end{proof}
In light of this, we are encouraged to look for injective set maps into $W_s$ which respect the action of Galois.
\begin{lemma}
The stabilizer in $H_s$ for any $\eta \in S[0]$ is trivial. In particular, if the action of $H_s$ on $S[0]$ has any fixed points (e.g., if $S[0]$ has a $k$-rational point), then $H_s = \{1 \}$.
\end{lemma}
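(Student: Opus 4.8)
The plan is to adapt the argument of Proposition~\ref{prop:kS_less_S0}, using the fixed point $\eta$ in the role that $\xi_0$ plays there. Fix $\eta \in S[0]$ and let $\sigma \in H_s$ satisfy $\sigma(\eta) = \eta$; the goal is to show $\sigma = 1$. Since $H_s = \Gal\bigl(k(S[\less s])/k(W_s)\bigr)$ is a subgroup of $\Gal\bigl(k(S[\less s])/k\bigr)$, which acts faithfully on $S[\less s]$ by the definition of $k(S[\less s])$, it is enough to show that $\sigma$ fixes $S[\less s]$ pointwise. By Proposition~\ref{prop:kS_less_S0}, $\sigma$ fixes $S[\less s] - S[0]$ pointwise already, so the task reduces to showing $\sigma$ fixes $S[0]$ pointwise.

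Fix then $\xi \in S[0]$ with $\xi \neq \eta$ (the case $\xi = \eta$ being the hypothesis). Since $0 < s$, the divisors $[\xi]_s$ and $[\eta]_s$ have degree $1$, so $[\xi]_s - [\eta]_s = D_{s,\xi} - D_{s,\eta}$ lies in $\mathcal{W}_s^0$ and $\alpha_s\bigl([\xi]_s - [\eta]_s\bigr) = \Delta_{s,\xi} - \Delta_{s,\eta} \in W_s$. Using that $\alpha_s$, hence $\alpha_s \otimes 1$, is $G_k$-equivariant, that $\sigma$ fixes $W_s$ pointwise, and that $\sigma(\eta) = \eta$, one computes
\[ (\alpha_s \otimes 1)\bigl(\overline{[\sigma(\xi)]}_s - \overline{[\xi]}_s\bigr) = \sigma\cdot(\Delta_{s,\xi} - \Delta_{s,\eta}) - (\Delta_{s,\xi} - \Delta_{s,\eta}) = 0. \]
Since $\sigma(\xi) \in S[0] \subseteq S[\less s]$, the element $\overline{[\sigma(\xi)]}_s - \overline{[\xi]}_s$ lies in $\mathcal{W}_s^0 \otimes \F_\ell$, so by Lemma~\ref{lemma:Ws_FlGk} it lies in $\ker(\alpha_s \otimes 1) = \langle \overline{\mathfrak{R}}_s\rangle$.

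Now I would finish with a support argument. Suppose, for contradiction, that $\sigma(\xi) \neq \xi$. By Lemma~\ref{lemma:WsFl}, $\{\overline{[\zeta]}_s : \zeta \in S[\less s]\}$ is an $\F_\ell$-basis of $\mathcal{W}_s \otimes \F_\ell$, so $\overline{[\sigma(\xi)]}_s - \overline{[\xi]}_s$ is a nonzero vector supported, in this basis, exactly on $\{\xi, \sigma(\xi)\}$; whereas by Lemma~\ref{lemma:Rsbar} every coefficient of $\overline{\mathfrak{R}}_s$ in this basis is nonzero, so $\overline{\mathfrak{R}}_s$ is supported on all of $S[\less s]$. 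As $\overline{[\sigma(\xi)]}_s - \overline{[\xi]}_s$ is a nonzero scalar multiple of $\overline{\mathfrak{R}}_s$, we must have $S[\less s] = \{\xi, \sigma(\xi)\}$. In particular $\eta \in S[\less s] = \{\xi, \sigma(\xi)\}$, and $\eta \neq \xi$ forces $\eta = \sigma(\xi)$, hence $\xi = \sigma^{-1}(\eta) = \eta$, contradicting $\xi \neq \eta$. Therefore $\sigma(\xi) = \xi$; as $\xi \in S[0]$ was arbitrary, $\sigma$ fixes $S[0]$ pointwise, and so $\sigma = 1$. The final assertion follows at once: if $\eta \in S[0]$ is a fixed point of the $H_s$-action on $S[0]$, then $\Stab_{H_s}(\eta) = H_s$, which we have just shown is trivial; and a $k$-rational point of $S[0]$ is fixed by all of $G_k$, in particular by $H_s$.

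The main obstacle is the bookkeeping needed to pass from Proposition~\ref{prop:kS_less_S0} to the points of $S[0]$: in that proposition the computation is clean because $\overline{D}_{s,\xi} = \overline{[\xi]}_s$ whenever $\xi \notin S[0]$, whereas for $\xi \in S[0]$ the generator $D_{s,\xi}$ is entangled with $\xi_0$. Replacing $D_{s,\xi}$ by the base-point-free combination $D_{s,\xi} - D_{s,\eta} = [\xi]_s - [\eta]_s$ is what makes the computation go through, and this is exactly the step where the hypothesis $\sigma(\eta) = \eta$ enters. The only other delicate point is extracting the contradiction in the last paragraph, which relies on the precise shape of $\overline{\mathfrak{R}}_s$ from Lemma~\ref{lemma:Rsbar} (all basis coefficients nonzero, hence full support) together with the presence of the extra fixed point $\eta$ to eliminate the degenerate possibility $S[\less s] = \{\xi, \sigma(\xi)\}$.
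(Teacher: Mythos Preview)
Your proof is correct. The approach differs from the paper's in execution, though the underlying idea is close. The paper observes that the choice of base point $\xi_0 \in S[0]$ in \S\ref{section:Ws} was arbitrary, so one may take $\xi_0 = \eta$; then the map $\Psi_0 \colon S[0] - \{\xi_0\} \to W_s$, $\xi \mapsto \Delta_{s,\xi}$, is $G_{k(\xi_0)}$-equivariant and injective (by the linear independence of the $D_{s,\xi}$), giving $k(S[0]) \subseteq k(\xi_0)(W_s)$ directly and hence $k(S[\less s]) = k(W_s)(\xi_0)$, so the stabilizer of $\xi_0 = \eta$ is trivial. You instead leave $\xi_0$ alone and work with the base-point-free combination $[\xi]_s - [\eta]_s = D_{s,\xi} - D_{s,\eta}$, then run the support argument on $\overline{\mathfrak{R}}_s$ exactly as in Proposition~\ref{prop:kS_less_S0}, using the fixed point $\eta$ to kill the degenerate case $S[\less s] = \{\xi,\sigma(\xi)\}$. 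The paper's route is shorter and avoids the case analysis at the end, but your argument has the virtue of not disturbing the global choice of $\xi_0$ and of making transparent exactly where the hypothesis $\sigma(\eta)=\eta$ enters.
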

\begin{proof}
Because the original selection of $\xi_0 \in S[0]$ in \S3 was arbitrary, we may assume $\eta = \xi_0$. Now the map
\[ \Psi_0 \colon S[0] - \{\xi_0 \} \longrightarrow W_s, \qquad \xi \mapsto \Delta_{s,\xi}, \]
though not generally $G_k$-equivariant, is $G_{k(\xi_0)}$-equivariant. The independence of $\{D_{s,\xi} : \xi \neq \xi_0 \}$ implies $\Psi_0$ is injective. So
\[ k(S[0]) = k(\xi_0)(S[0] - \{\xi_0 \}) \subseteq k(\xi_0)(W_s), \]
which implies $k(S[\less s]) = k(W_s)(\xi_0)$. Now for any $\eta \in S[0]$,
\[ \Stab_{H_s}(\eta) = \Gal( k(S[\less s])/k(W_s)(\eta) ) = \{1 \}. \qedhere \]
\end{proof}
\begin{corollary}\label{cor:ds_r0}
$d_s \mid r_0$.
\end{corollary}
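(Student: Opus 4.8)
The plan is to exploit the fact that $H_s$ acts on the finite set $S[0]$ and that, by the preceding lemma, this action is free. First I would check that $H_s$ does act on $S[0]$: since $s \geq 1$ we have $S[0] \subseteq S[\less s]$, and $S[0]$ is $G_k$-stable, so it is stable under $\Gal(\bar{k}/k(W_s))$; as $k(S[0]) \subseteq k(S[\less s])$, this action factors through the quotient $H_s = \Gal(k(S[\less s])/k(W_s))$.

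Next, the preceding lemma tells us that $\Stab_{H_s}(\eta) = \{1\}$ for every $\eta \in S[0]$, so the $H_s$-action on $S[0]$ is free. By the orbit--stabilizer theorem, every $H_s$-orbit in $S[0]$ has cardinality $\#H_s = d_s$. Writing $S[0]$ as the disjoint union of its $H_s$-orbits, we conclude $r_0 = \#S[0]$ is a multiple of $d_s$, i.e.\ $d_s \mid r_0$.

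There is essentially no obstacle here: the statement is an immediate consequence of the freeness of the action (the content of the previous lemma) together with orbit--stabilizer. The only point requiring a word of care is the verification that $H_s$, rather than merely $G_k$, acts on $S[0]$, which is handled by the inclusion $S[0] \subseteq S[\less s]$ noted above.
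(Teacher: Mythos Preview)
Your proposal is correct and follows essentially the same approach as the paper: both use the previous lemma to conclude that every $H_s$-orbit in $S[0]$ has size exactly $d_s$, whence $d_s \mid r_0$. Your added verification that the $G_k$-action on $S[0]$ factors through $H_s$ is a reasonable bit of extra care that the paper leaves implicit.
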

\begin{proof}
Because all stabilizers are trivial, the orbit of any $\eta \in S[0]$ has size exactly $\#H_s = d_s$.
\end{proof}
\begin{proof}[Proof of Proposition \ref{prop:exceptional_cases}]
If (a) does not hold, then $d_s \neq 1$. We first suppose $\ell \neq 2$. Consider the following $G_k$-set map:
\[ \Psi \colon S[0] \times S[0] - \triangle S[0] \longrightarrow W_s, \qquad (\xi, \eta) \mapsto \Delta_{s,\xi} - \Delta_{s,\eta}. \]
To see that $\Psi$ is in fact $G_k$-equivariant, note $D_{s,\xi} - D_{s,\eta} = [\xi]_s - [\eta]_s$. The $G_k$-equivariance of $\Psi$ now follows from that of $[\,\cdot\,]_s$ and $\alpha_s$. Because we are assuming $H_s$ is nontrivial, $\Psi$ cannot be injective, by Lemma \ref{lemma:Gk_inj}(b) and Proposition \ref{prop:kS_less_S0}. Thus, there exist distinct pairs $(\xi, \eta)$, $(\xi', \eta')$ such that
\[ \Delta_{s,\xi} - \Delta_{s,\eta} - \Delta_{s,\xi'} + \Delta_{s,\eta'} = 0, \qquad \xi \neq \eta, \xi' \neq \eta' \]
in $W_s$. Appealing to the exact sequence established in Lemma \ref{lemma:Ws_FlGk}, this is equivalent to the equality
\[ \overline{D}_{s,\xi} - \overline{D}_{x,\eta} - \overline{D}_{s,\xi'} + \overline{D}_{s,\eta'} = \mu \overline{\mathfrak{R}}_s \]
within $\mathcal{W}^0_s \otimes \F_\ell$. We claim $\mu \neq 0$.

If $\mu = 0$, then lifting back to $\mathcal{W}_s^0$ gives
\[ [\xi]_s - [\eta]_s - [\xi']_s + [\eta']_s \in \ell \mathcal{W}^0_s. \]
Because $[\,\cdot\,]$ produces reduced divisors (and because $\ell \neq 2$), the membership can hold only if this divisor vanishes outright. Further, for any distinct points $\omega, \omega'$, the divisors $[\omega]_s$ and $[\omega']_s$ have disjoint support. Consequently, this vanishing forces $(\xi, \eta) = (\xi', \eta')$, a contradiction. So $\mu \neq 0$.

Recall that $\{ \overline{D}_{s,\xi} : \xi \neq \xi_0 \}$ is a basis for $\mathcal{W}^0_s \otimes \F_\ell$, and that when $\overline{\mathfrak{R}}_s$ is expressed in terms of this basis, every coefficient is nonzero. Thus, we may conclude that $S[\less s] = S[0] = \{\xi, \xi', \eta, \eta'\}$. In particular, $r_0 \leq 4$. However, as $d_s \neq 1$, the action of $H_s$ on $S[0]$ has no fixed points, and so each orbit has cardinality divisible by $\ell$. The only possibility is that $\ell = r_0 = d_s = 3$. At this point, we may assume $C_s$ has an affine model of the form
\[ y^{3^s} = \lambda \prod_{i=0}^2 (x - \xi_i)^{n_i}, \qquad 0 < n_i < 3^n, \quad 3^s \mid (n_0 + n_1 + n_2), \]
where the $H_s$-action on $\{\xi_0, \xi_1, \xi_2 \}$ is transitive. Since $C_s$ is $k$-rational, the exponents $n_i$ are all equal. Consequently, $3^{s-1} \mid n_i$. But already we have $\xi_0 \in S[0]$ and so $s = 1$. In summary, if $d_s \neq 1$ and $\ell \neq 2$, then (b) holds.

For the remainder, we now suppose $d_s \neq 1$ and $\ell = 2$. Consider now the $G_k$-set map
\[ \Psi' \colon {\wedge}^2 S[0] \longrightarrow W_s, \qquad \{\xi, \eta \} \mapsto \Delta_{s,\xi} + \Delta_{s,\eta}. \]
Note that, with $\ell = 2$, we have $\overline{[\xi]}_s + \overline{[\eta]}_s = \overline{D}_{s,\xi} + \overline{D}_{s,\eta}$
in $\mathcal{W}^0_s \otimes \F_\ell$, and so
\[ \Psi' \bigl( \{\xi, \eta \} \bigr) = (\alpha_s \otimes 1) \bigl( \overline{[\xi]}_s + \overline{[\eta]}_s \bigr). \]
By Lemma \ref{lemma:Gk_inj}(c), as $d_s \neq 1$, either $r_0 = 2$ or $\Psi'$ is not injective. Let us first assume $\Psi'$ fails to be injective. Then there exist two distinct subsets $\{\xi, \eta \}$, $\{\xi', \eta' \}$ such that
\[ \overline{[\xi]}_s + \overline{[\eta]}_s + \overline{[\xi']}_s + \overline{[\eta']}_s = \mu \overline{\mathfrak{R}}_s. \]
The divisor on the left can only vanish if $\{\xi, \eta \} = \{\xi', \eta' \}$, and so $\mu \neq 0$. Suppose the two subsets are not disjoint, say with $\xi = \xi'$. Then the relation simplifies to $\overline{[\eta]}_s + \overline{[\eta']}_s = \overline{\mathfrak{R}}_s$. But the support of $\overline{\mathfrak{R}}_s$ is all of $S[\less s]$, and so
$\xi \in S[0] = S[\less s] = \{\eta, \eta' \}$. This violates the assumption that $\{\xi, \eta \}, \{\xi', \eta' \} \in {\wedge}^2 S[0]$. Thus the two sets are disjoint. Hence $S[0] = S[\less s]$ and $r_0 = 4$. We may now assume $C_s$ has the form
\[ y^{2^s} = \lambda \prod_{i=0}^3 (x-\xi_i)^{n_i}, \qquad 0 < n_i < 2^n, \quad 2^s \mid (n_0 + n_1 + n_2 + n_3). \]
By Corollary \ref{cor:ds_r0}, $d_s \in \{2, 4 \}$. If $d_s = 2$, then (d) holds. If $d_s = 4$, then $H_s$ is transitive on $S[0]$, and the $n_i$ are all equal. In this case we have $2^s \mid 4n_0$. But already $\xi_0 \in S[0]$, and so $s \leq 2$. Thus (c) holds.

Finally, let us now take $r_0 = 2$. If $S[0] \neq S[\less s]$, then (f) holds. If $S[0] = S[\less s]$, then we may assume $C_s$ has the form
\[ y^{2^s} = \lambda (x - \xi_0)^{n_0} (x - \xi_1)^{n_1}, \qquad 0 < n_i < 2^n, \quad 2^s \mid (n_0 + n_1). \]
The transitivity of $H_s$ on $S[0]$ implies $n_0 = n_1$, and so $2^s \mid 2n_0$. But $\xi_0 \in S[0]$ and so $s = 1$. In this case, (e) holds.
\end{proof}

\section{An Application}

Recall that a Picard curve is a genus $3$ superelliptic curve $C/k$ and so, possibly after base change, provides a cyclic cover of $\P^1$ of degree $3$. In \cite{Malmskog-Rasmussen:2016}, Malmskog and the first author determined the full set of Picard curves defined over $\Q$ which have good reduction away from $3$. Up to $\Q$-isomorphism, there are $45$ such curves. For a subset of $9$ of these curves, the authors demonstrated that $\Q(\Jac(C)[3^\infty]) \subseteq \ten(\Q, 3)$ by noting an explicit model for the curve and using the criterion of Anderson and Ihara. However, with Theorem \ref{thm:main_theorem}, we may now extend the result to all such curves.
\begin{proposition}
Let $C/\Q$ be a Picard curve with good reduction away from $3$. Then $\Q(\Jac(C)[3^\infty]) \subseteq \ten(\Q,3)$.
\end{proposition}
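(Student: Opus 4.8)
The plan is to derive the Proposition from Theorem~\ref{thm:main_theorem} by checking its hypotheses for $C$. Hypothesis~(a), good reduction away from $3$, is assumed, so the real task is hypothesis~(b): that the branch locus $S$ of $C$ is $\ten(\Q,3)$-rational.

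First I would put $C$ in the normalized shape of Lemma~\ref{lemma:niceform}, $C\colon y^3=\hat f(x)$ with $\infty\notin S$. Here $\ell=3$ and, $C$ being superelliptic of degree $3$, also $n=1$, so $C=C_1=C_n$ and $J=J_1=A_1$. From Lemma~\ref{lemma:hs}, $\dim J=\dim A_1=h_1=\#S[\less 1]-2=\#S-2$, and since $\dim J=3$ we get $\#S=5$, with $S=S[0]$ (all ramification indices equal $3$). The multiplicities $n_\xi$ are positive, less than $3$, and sum to a multiple of $3$, so $\{n_\xi:\xi\in S\}$ is $\{1,1,1,1,2\}$ or $\{1,2,2,2,2\}$; in either case a unique point $\xi_\ast\in S$ has a multiplicity unlike the other four, whence $\{\xi_\ast\}$ is a $G_\Q$-stable singleton and $\xi_\ast\in\P^1(\Q)$, while the remaining four points are the roots of a quartic $g\in\Q[x]$. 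Thus $\Q(S)$ is the splitting field of $g$ over $\Q$, so $\Gal(\Q(S)/\Q)$ embeds into $\mathfrak{S}_4$.

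Next I would show $\Q(S)=\Q(W_1)$ and that this field lies in $\Q(J[3])$. Since $n=1$, Lemma~\ref{lemma:Ws_in_Asl} gives $W_1\le A_1[3]=J[3]$, and Lemma~\ref{lemma:Ws_Gk_stable} gives $\Q(W_1)\subseteq\Q(S)$. For the reverse inclusion, take the base point $\xi_0$ of \S3 to be $\xi_\ast$; then the map $\xi\mapsto\Delta_{1,\xi}$ from $S\smallsetminus\{\xi_\ast\}$ to $W_1$ is $G_\Q$-equivariant (as $\xi_\ast$ is rational) and injective (by the independence of the $D_{1,\xi}$, Lemma~\ref{lemma:W0_basis}), exactly as in the argument preceding Corollary~\ref{cor:ds_r0}; hence $\Q(S)=\Q(\xi_\ast)\bigl(S\smallsetminus\{\xi_\ast\}\bigr)\subseteq\Q(W_1)$. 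Therefore $\Q(S)=\Q(W_1)\subseteq\Q(J[3])$. As $J$ has good reduction away from $3$, the criterion of N\'eron--Ogg--Shafarevich (Serre--Tate \cite{Serre-Tate:1968}) shows $\Q(J[3])/\Q$, hence $\Q(S)/\Q$, is unramified away from $3$. (All of the structural lemmas used here need only that $C$ is a geometrically irreducible superelliptic curve, so there is no circularity with hypothesis~(b).)

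Finally I would check $\Q(S)\subseteq\ten(\Q,3)$, equivalently that $\Q(S)\,\Q(\bmu_{3^\infty})/\Q(\bmu_{3^\infty})$ is pro-$3$ and unramified away from $3$. The unramified part is immediate, since $\Q(S)/\Q$ is unramified away from $3$ and $\Q(\bmu_{3^\infty})/\Q$ ramifies only at $3$. For the pro-$3$ part, note that the largest subextension of $\Q(S)/\Q$ abelian over $\Q$ is unramified away from $3$ and so lies in $\Q(\bmu_{3^\infty})$ by Kronecker--Weber; a short Galois-correspondence argument then identifies $\Gal(\Q(S)\,\Q(\bmu_{3^\infty})/\Q(\bmu_{3^\infty}))$ with the commutator subgroup of $\Gal(\Q(S)/\Q)$, so the question is whether that commutator subgroup is a $3$-group. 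Since $\Gal(\Q(S)/\Q)\hookrightarrow\mathfrak{S}_4$, the only obstructions are the Galois groups $D_4$, $A_4$, $\mathfrak{S}_4$; the case $D_4$ is ruled out at once, because a $D_4$-extension of $\Q$ possesses three distinct quadratic subfields while the only quadratic field unramified away from $3$ is $\Q(\sqrt{-3})$. The remaining possibilities $A_4$ and $\mathfrak{S}_4$ are, I expect, the genuine obstacle: I do not see how to exclude them by a purely formal argument, so the efficient path is to invoke the explicit list of the $45$ Picard curves over $\Q$ with good reduction away from $3$ in \cite{Malmskog-Rasmussen:2016}, and to verify, model by model, that the Galois group of the associated quartic has a $3$-group commutator subgroup, i.e.\ that $\Q(S)\subseteq\ten(\Q,3)$. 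With hypotheses (a) and (b) in hand, Theorem~\ref{thm:main_theorem} gives $\Q(\Jac(C)[3^\infty])\subseteq\ten(\Q,3)$.
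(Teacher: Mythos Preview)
Your proposal is correct, and at the decisive step it coincides with the paper's own proof: both invoke the explicit classification in \cite{Malmskog-Rasmussen:2016} to verify that $\Q(S)\subseteq\ten(\Q,3)$, and then apply Theorem~\ref{thm:main_theorem}. The paper's proof consists of nothing more than that citation followed by the application.

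Your additional structural work is all correct: the count $\#S=5$ and the multiplicity analysis producing a distinguished rational branch point $\xi_\ast$; the identification $\Q(S)=\Q(W_1)$ (also available directly from Proposition~\ref{prop:exceptional_cases}, since $r_0=5$ avoids every exceptional case listed there); and the inclusion $\Q(S)\subseteq\Q(J[3])$, which yields unramifiedness away from $3$ without consulting the list. Your elimination of $D_4$ via the uniqueness of the quadratic field unramified away from $3$ is a genuine partial reduction. But none of this bypasses \cite{Malmskog-Rasmussen:2016}: you still need the explicit models to exclude $A_4$ and $\mathfrak{S}_4$. So what your longer route buys is a sharpening of the model-by-model check to a Galois-group computation on a quartic, while the paper simply cites the end result outright.
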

\begin{proof}
By the results of \cite{Malmskog-Rasmussen:2016}, every such curve has a branch set $S$ satisfying $\Q(S) \subseteq \ten(\Q,3)$. Thus, Theorem B applies.
\end{proof}

\bibliographystyle{halpha}

\def\cprime{$'$}

\end{CJK}
\end{document}